
\documentclass[leqno,11pt]{article}
\usepackage{amssymb,amsmath,mathabx}

\usepackage{amsfonts,euscript,epsfig,color}
\usepackage[all]{xy}




\newtheorem{theo}{Theorem}[section]
\newtheorem{defi}[theo]{Definition}

\newtheorem{lem}[theo]{Lemma}
\newtheorem{prop}[theo]{Proposition}
\newtheorem{rem}[theo]{Remark}
\newtheorem{coro}[theo]{Corollary}
\newtheorem{exam}[theo]{Example}
\newtheorem{conj}[theo]{Conjecture}

\newenvironment{proof}{{\bf Proof.}}



\newcommand{\kgot}{\ensuremath{\mathfrak{k}}}
\newcommand{\hgot}{\ensuremath{\mathfrak{h}}}
\newcommand{\ggot}{\ensuremath{\mathfrak{g}}}

\newcommand{\tgot}{\ensuremath{\mathfrak{t}}}
\newcommand{\pgot}{\ensuremath{\mathfrak{p}}}

\newcommand{\Rgot}{\ensuremath{\mathfrak{R}}}


\newcommand{\Bcal}{\ensuremath{\mathcal{B}}}
\newcommand{\Ccal}{\ensuremath{\mathcal{C}}}
\newcommand{\Dcal}{\ensuremath{\mathcal{D}}}
\newcommand{\Ecal}{\ensuremath{\mathcal{E}}}

\newcommand{\Lcal}{\ensuremath{\mathcal{L}}}

\newcommand{\Ocal}{\ensuremath{\mathcal{O}}}
\newcommand{\Pcal}{\ensuremath{\mathcal{P}}}

\newcommand{\Scal}{\ensuremath{\mathcal{S}}}
\newcommand{\Ucal}{\ensuremath{\mathcal{U}}}


\newcommand{\Cbb}{\ensuremath{\mathbb{C}}}
\newcommand{\Rbb}{\ensuremath{\mathbb{R}}}
\newcommand{\Zbb}{\ensuremath{\mathbb{Z}}}


\newcommand{\As}{{\ensuremath{\rm As}}}
\newcommand{\AS}{{\ensuremath{\rm AS}}}

\newcommand{\croc}{\ensuremath{\hookrightarrow}}
\newcommand{\T}{\ensuremath{\hbox{\bf T}}}

\newcommand{\End}{\ensuremath{\hbox{\rm End}}}

\newcommand{\indice}{\mathrm{Index}}

\def \what {\widehat}



\def \wR {{\widehat{R}}}
\def \wK {{\widehat{K}}}
\def \tK {{\tilde{K}}}

\def \wG {{\widehat{G}}}



\newcommand{\p}{\operatorname{p}}

\def \clif {\mathbf{c}}


\def \p {{\rm p}}

\def \cl {{\rm cl}}

\def \p {{\rm p}}

\newcommand{\QS}{\ensuremath{\mathcal{Q}^{\mathrm{spin}}}}
\newcommand{\QSf}{\ensuremath{\widehat{\mathrm{Q}}^{\mathrm{spin}}}}

\newcommand{\Spin}{\ensuremath{{\rm Spin}}}
\newcommand{\spinc}{\ensuremath{{\rm spin}^{c}}}
\newcommand{\Spinc}{\ensuremath{{\rm Spin}^{c}}}

\begin{document}

\title{Kirillov's orbit method: the case of discrete series representations}

\author{Paul-Emile PARADAN
\footnote{Institut Montpelli\'erain Alexander Grothendieck,
Universit\'e de Montpellier, CNRS \texttt{paul-emile.paradan@umontpellier.fr}}}


\maketitle

\begin{abstract}
Let $\pi$ be a discrete series representation  of a real semi-simple Lie group $G'$ and let $G$ be a semi-simple subgroup of $G'$. In this paper, we give a geometric expression of the $G$-multiplicities in $\pi\vert_G$ when the representation $\pi$ 
is $G$-admissible.
\end{abstract}

{\small
\tableofcontents}

\section{Introduction}

This paper is concerned by a central problem of non-commutative harmonic analysis : given a unitary irreducible 
representation $\pi$ of a Lie group $G'$, how does $\pi$ decomposes when restricted to a closed subgroup 
$G\subset G'$ ? We analyse this problem for Harish-Chandra discrete series representations of a connected 
real semi-simple Lie group $G'$ with finite center, relatively to a connected real semi-simple subgroup $G$ (also with finite center).

We start with Harish-Chandra parametrization of the discrete series representations. We can attach an unitary irreducible 
representation $\pi^{G'}_{\Ocal'}$ of the group $G'$ to any regular admissible elliptic coadjoint orbit $\Ocal'\subset (\ggot')^*$, 
and Schmid proved that the representation $\pi^{G'}_{\Ocal'}$ could be realize as the quantization of the orbit 
$\Ocal'$ \cite{Schmid71,Schmid76}. This is a vast generalization of Borel-Weil-Bott's construction of finite dimensional representations of compact Lie groups. 
In the following, we denote $\wG_d$ and $\wG'_d$ the sets of regular admissible elliptic coadjoint orbits of our connected 
real semi-simple Lie groups $G$ and $G'$.

One of the rule of Kirillov's orbit method \cite{Kirillov62} is concerned with the functoriality relatively to inclusion $G\croc G'$ of 
closed subgroups. It means that, starting with discrete series representations representations $\pi^G_{\Ocal}$ and 
$\pi^{G'}_{\Ocal'}$ attached  to regular admissible elliptic orbits $\Ocal\subset\ggot^*$ and 
$\Ocal'\subset (\ggot')^*$, one expects that the multiplicity of $\pi^{G}_{\Ocal}$ in the restriction $\pi^{G'}_{\Ocal'}\vert_{G}$ can be computed geometrically in terms of the space 
\begin{equation}\label{eq:reduced-space-intro}
\Ocal'\slash\!\!\slash \Ocal := \Ocal'\cap\p_{\ggot,\ggot'}^{-1}(\Ocal)/G,
\end{equation} 
where $\p_{\ggot,\ggot'}:(\ggot')^*\to\ggot^*$ denotes the canonical projection. One recognises  
that  (\ref{eq:reduced-space-intro}) is a symplectic reduced space in the sense of Marsden-Weinstein, since  
$\p_{\ggot,\ggot'}:\Ocal'\to \ggot^*$ is the moment map relative to the Hamiltonian action of $G$ on $\Ocal'$. 

In other words, Kirillov's orbit method tells us that the branching laws $[\pi^{G}_{\Ocal}:\pi^{G'}_{\Ocal'}]$ should be 
compute geometrically. So far, the following special cases have been achieved~:

1. $G\subset G'$ are compact. In the 1980s, Guillemin and Sternberg \cite{Guillemin-Sternberg82} studied the geometric quantization of general 
$G$-equivariant compact K\"{a}hler manifolds. They  
proved the ground-breaking result that the multiplicities of this $G$-representation are calculated in terms of geometric 
quantizations of the symplectic reduced spaces. This phenomenon, which has been the center of many research and generalisations 
\cite{Meinrenken98,Meinrenken-Sjamaar,Tian-Zhang98,pep-RR,Ma-Zhang14, pep-IF,pep-vergne-acta,pep-vergne:witten,Hochs-Song-duke}, is 
called nowaday ``quantization commutes with reduction'' (in short, ``[Q,R]=0''). 

2. $G$ is a compact subgroup of $G'$. In \cite{pep-ENS}, we used the Blattner formula to see that the [Q,R]=0 phenomenon holds in this context 
when $G$ is a maximal compact subgroup. Duflo-Vergne have generalized this result for any compact subgroup \cite{Duflo-Vergne2011}. Recently, 
Hochs-Song-Wu have shown that the [Q,R]=0 phenomenon holds for any tempered representation of $G'$ relatively to a maximal compact subgroup 
\cite{Hochs-Song-Wu}.

3. $\pi^{G'}_{\Ocal'}$ is an holomorphic discrete series. We prove that the [Q,R]=0 phenomenon holds with some assumption on $G$ \cite{pep-jems}.

\medskip

However, one can observe that the restriction of $\pi^{G'}_{\Ocal'}$ with respect to $G$ may have a wild behavior in general, even if $G$ is a maximal reductive subgroup in $G'$ (see  \cite{Toshi-Inventiones94}). 

In \cite{Toshi-Inventiones94,Toshi-Annals98,Toshi-Inventiones98} T. Kobayashi singles out a nice class of branching problems 
where each $G$-irreducible summand of $\pi\vert_G$ occurs 
discretely with finite multiplicity~: the restriction $\pi\vert_G$ is called $G$-admissible.

So we focus our attention to a discrete series $\pi^{G'}_{\Ocal'}$ that admit an admissible restriction relatively to $G$. 
It is well-known that we have then an Hilbertian direct sum decomposition
$$
\pi_{\Ocal'}^{G'}\vert_G=\sum_{\Ocal\in \widehat{G}_d} m_{\Ocal'}^{\Ocal}\,\pi_\Ocal^G
$$
where the multiplicities $m_{\Ocal'}^{\Ocal}$ are finite.

We will use the following geometrical characterization of the 
$G$-admissibility obtained by Duflo and Vargas \cite{Duflo-Vargas2007,Duflo-Vargas2010}.

\begin{prop}\label{prop:Duflo-Vargas}
The representation $\pi_{\Ocal'}^{G'}$ is $G$-admissible if and only if the restriction of the map 
$\p_{\ggot,\ggot'}$ to the coadjoint orbit $\Ocal'$  is a proper map.
\end{prop}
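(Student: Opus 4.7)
The plan is to combine Kobayashi's admissibility criterion, which characterises $G$-admissibility via a cone transversality condition, with a geometric description of the asymptotic $K'$-support of the discrete series $\pi^{G'}_{\Ocal'}$ obtained from Schmid's realisation and the Blattner formula.

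First I would reduce to a compact subgroup. Choose nested maximal compact subgroups $K\subset K'$ of $G\subset G'$, with corresponding maximal tori $\tgot\subset\tgot'$. A theorem of Kobayashi asserts that $\pi^{G'}_{\Ocal'}$ is $G$-admissible if and only if it is $K$-admissible; by his cone criterion the latter holds if and only if the asymptotic $K'$-support $\AS_{K'}(\pi^{G'}_{\Ocal'})\subset(\tgot'_+)^*$ meets the slice $(\tgot'_+)^*\cap\ker(\p_{\kgot,\kgot'})$ only at the origin.

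Next I would compute the asymptotic $K'$-support geometrically. Let $\Phi:\Ocal'\to(\kgot')^*$ denote the restriction map, i.e.\ the moment map of the Hamiltonian $K'$-action on the symplectic manifold $\Ocal'$. Using the Blattner-Schmid formula for the $K'$-multiplicities of $\pi^{G'}_{\Ocal'}$ (or equivalently Schmid's $L^2$-cohomology realisation combined with a $[Q,R]=0$ argument for the compact action of $K'$), I would show
\begin{equation*}
\AS_{K'}(\pi^{G'}_{\Ocal'})=\mathrm{Asym}\bigl(\Phi(\Ocal')\cap(\tgot'_+)^*\bigr),
\end{equation*}
the asymptotic cone of the image of $\Ocal'$ in the closed positive chamber.

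Finally I would translate this cone condition into properness of $\p_{\ggot,\ggot'}\vert_{\Ocal'}$. Because $\Ocal'$ is closed in $(\ggot')^*$, the restriction of the linear map $\p_{\ggot,\ggot'}$ to $\Ocal'$ is proper if and only if $\mathrm{Asym}(\Ocal')\cap\ker(\p_{\ggot,\ggot'})=\{0\}$. Using the Cartan decompositions $\ggot=\kgot\oplus\pgot$, $\ggot'=\kgot'\oplus\pgot'$, and the fact that for an elliptic orbit the asymptotic cone of $\Ocal'$ is $K'$-invariant and projects under $\Phi$ onto the asymptotic cone of $\Phi(\Ocal')$, this condition reduces first to transversality in the $K'$-invariant part and then, by a standard reduction to the chamber, to
\begin{equation*}
\mathrm{Asym}\bigl(\Phi(\Ocal')\cap(\tgot'_+)^*\bigr)\cap\bigl((\tgot'_+)^*\cap\ker(\p_{\kgot,\kgot'})\bigr)=\{0\},
\end{equation*}
which is the admissibility criterion of step one. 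The main obstacle is the second step: the precise identification of $\AS_{K'}(\pi^{G'}_{\Ocal'})$ with the asymptotic cone of $\Phi(\Ocal')\cap(\tgot'_+)^*$ requires controlling the alternating cancellations in Blattner's formula and matching the leading-order support with the Duistermaat-Heckman measure on $\Phi(\Ocal')$; alternatively one can proceed via Rossmann's integral formula for the character of $\pi^{G'}_{\Ocal'}$ together with wavefront-set estimates for tempered characters.
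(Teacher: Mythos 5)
Your proposal follows the same overall route as the paper's refinement of this statement (Theorem~\ref{theo:equivalence}): reduce to $K$-admissibility via Kobayashi, invoke Kobayashi's asymptotic $K'$-support criterion, identify $\AS_{K'}(\pi^{G'}_{\Ocal'})$ with the projected asymptotic cone of $\Ocal'$, and translate properness of $\p_{\ggot,\ggot'}\vert_{\Ocal'}$ into a cone-transversality condition. Your step~2, which you flag as the main obstacle, is precisely the Barbasch--Vogan identification $\AS_{K'}(\pi^{G'}_{\Ocal'})=\p_{\kgot',\ggot'}\left(\As(\Ocal')\right)$; the route you sketch via Rossmann's character formula and wavefront-set estimates is exactly the one they take, so this step is sound once you cite it (the alternative via cancellations in Blattner's formula or a $[Q,R]=0$ argument is not a proof).

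The genuine gap is in step~3. The general fact you quote is correct: since $\Ocal'$ is closed, $\p_{\ggot,\ggot'}\vert_{\Ocal'}$ is proper iff $\As(\Ocal')\cap\ggot^\perp=\{0\}$, where $\ggot^\perp\subset(\ggot')^*$ is the annihilator of $\ggot$. But the condition coming from Kobayashi and Barbasch--Vogan is $\p_{\kgot',\ggot'}\left(\As(\Ocal')\right)\cap\kgot^\perp=\{0\}$ with $\kgot^\perp\subset(\kgot')^*$, i.e.\ a transversality statement involving only $\kgot$, not all of $\ggot=\kgot\oplus\pgot$. A nonzero $\xi\in\As(\Ocal')$ with $\xi\vert_{\kgot}=0$ need not vanish on $\pgot$, so neither implication between the two conditions is a formal consequence of the Cartan decomposition or of $K'$-invariance of $\As(\Ocal')$. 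Closing this is precisely the content of the nontrivial lemma (Duflo--Vargas, Paradan) that $\Phi_G:\Ocal'\to\ggot^*$ is proper if and only if $\Phi_K:\Ocal'\to\kgot^*$ is proper; its proof uses that $\Phi_G(\Ocal')$ lies in the strongly elliptic set $\ggot^*_{se}$ together with the nonabelian convexity theorem, neither of which you invoke. Your appeal to ``transversality in the $K'$-invariant part'' followed by a ``standard reduction to the chamber'' does not supply this argument. A secondary omission: Kobayashi gives only $K$-admissible $\Rightarrow$ $G$-admissible; the converse, which you also need, follows in the paper from finiteness of Blattner multiplicities (Proposition~\ref{prop:restriction}) and should be stated.
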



Let $(\Ocal',\Ocal)\in \widehat{G}'_d\times \wG_d$. Let us explain how we can quantize the compact symplectic reduced space 
$\Ocal'\slash\!\!\slash \Ocal$ when the map $\p_{\ggot,\ggot'}:\Ocal'\to \ggot^*$ is proper. 

If $\Ocal$ belongs to the set of regular values of $\p_{\ggot,\ggot'}:\Ocal'\to \ggot^*$, then $\Ocal'\slash\!\!\slash \Ocal$ 
is a compact symplectic orbifold equipped with a $\spinc$ structure. We denote $\QS(\Ocal'\slash\!\!\slash \Ocal)\in \Zbb$ the index of the corresponding $\spinc$ Dirac operator.

In general, we consider an elliptic coadjoint $\Ocal_\epsilon$ closed enough\footnote{The precise meaning will be explain in Section 
\ref{sec:quantization-proper-2}.} 
to $\Ocal$, so that $\Ocal'\slash\!\!\slash \Ocal_\epsilon$ is a compact 
symplectic orbifold equipped with a $\spinc$ structure. Let $\QS(\Ocal'\slash\!\!\slash \Ocal_\epsilon)\in \Zbb$ be the index of the corresponding 
$\spinc$ Dirac operator. The crucial fact is that the quantity $\QS(\Ocal'\slash\!\!\slash \Ocal_\epsilon)$ does not depends on the 
choice of generic and small enough $\epsilon$. Then we take
$$
\QS(\Ocal'\slash\!\!\slash \Ocal):=\QS(\Ocal'\slash\!\!\slash \Ocal_\epsilon)
$$
 for generic and small enough $\epsilon$.

\medskip

The main result of this article is the following

\begin{theo}\label{theo:main}
 Let $\pi_{\Ocal'}^{G'}$ be a discrete series representation of $G'$ attached to a regular admissible elliptic coadjoint orbits $\Ocal'$. 
 If $\pi_{\Ocal'}^{G'}$ is $G$-admissible we have the Hilbertian 
 direct sum
\begin{equation}\label{eq:QR-G}
\pi_{\Ocal'}^{G'}\vert_G=\sum_{\Ocal\in \widehat{G}_d} \QS(\Ocal'\slash\!\!\slash \Ocal)\, \pi^G_\Ocal.
\end{equation}
In other words the multiplicity $[\pi^G_\Ocal:\pi_{\Ocal'}^{G'}]$ is equal to $\QS(\Ocal'\slash\!\!\slash \Ocal)$.
\end{theo}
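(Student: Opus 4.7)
My overall strategy is to realize $\pi^{G'}_{\Ocal'}$ as a $\Spinc$-quantization of the coadjoint orbit $\Ocal'$, recast the $G$-multiplicity $m^{\Ocal}_{\Ocal'}$ as a pairing of formal equivariant indices, and then deduce the equality with $\QS(\Ocal'\slash\!\!\slash\Ocal)$ from a ``quantization commutes with reduction'' principle adapted to the non-compact setting.

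By Schmid's geometric realization, $\pi^{G'}_{\Ocal'}$ is the $L^2$-index of a $G'$-invariant $\Spinc$-Dirac operator on $\Ocal'$ (or equivalently on $G'/K'$ with a suitable twist). The $G$-admissibility hypothesis is, by Proposition \ref{prop:Duflo-Vargas}, equivalent to the properness of the moment map $\p_{\ggot,\ggot'}\vert_{\Ocal'}$. This properness is precisely the condition needed to define a \emph{formal $G$-equivariant quantization} $\QSf_G(\Ocal')\in R^{-\infty}(G)$ via equivariant cutoff and localization techniques, and I would verify that this formal index coincides with the virtual $G$-character of $\pi^{G'}_{\Ocal'}\vert_G$ using the existing $L^2$ character theory of the discrete series.

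The multiplicity $m^{\Ocal}_{\Ocal'}$ is then the (well-defined, by admissibility) coefficient of $\pi^G_{\Ocal}$ in $\QSf_G(\Ocal')$. Using a shifting trick, this coefficient is recognised as the $G$-invariant part of the formal quantization of the Hamiltonian $G$-space $\Ocal'\times\overline{\Ocal}$, where $\overline{\Ocal}$ denotes $\Ocal$ with opposite symplectic form. The diagonal moment map $\p_{\ggot,\ggot'}-\p_\Ocal$ remains proper, and its zero fibre descends to precisely $\Ocal'\slash\!\!\slash\Ocal$. I would then invoke the non-compact $\Spinc$-version of [Q,R]=0 (Ma--Zhang, Hochs--Song, and my earlier work with Vergne) to identify this $G$-invariant part with $\QS(\Ocal'\slash\!\!\slash\Ocal)$, handling the possible non-regularity of $\Ocal$ by perturbing to $\Ocal_\epsilon$ and using a standard cobordism argument for stability in $\epsilon$.

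The main obstacle is the extension of the [Q,R]=0 machinery from compact groups to the non-compact reductive group $G$. My intended route is to first reduce to a maximal compact $K\subset G$, where the non-compact [Q,R]=0 theorem of Hochs--Song--Wu applies directly and yields the $K$-multiplicities as $\Spinc$-indices of $K$-reductions; then Blattner-type formulas convert these $K$-multiplicities into the desired $G$-multiplicities. The nontrivial point is to verify that the resulting alternating sum of $K$-reduced spaces reassembles, up to $\Spinc$-cobordism, into the single $G$-reduction $\Ocal'\slash\!\!\slash\Ocal$ appearing in the statement; this is where the most delicate geometric work lies, since one must track how the non-compact directions $G/K$ are absorbed into the $\Spinc$-structure on the $G$-quotient.
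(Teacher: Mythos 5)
The broad strategy you describe -- deduce properness from admissibility, pass to a formal quantization, and funnel everything through the maximal compact $K$ and Blattner-type identities -- is indeed the road the paper takes. But your middle paragraph contains a step that does not exist as stated, and your final paragraph replaces the paper's key mechanism with a different (and more fragile) one.

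On the middle paragraph: there is no ``$L^2$ character theory'' that lets you directly certify that a formal $G$-equivariant index built from $\Ocal'$ equals $\pi^{G'}_{\Ocal'}\vert_G$. In the paper, $\QSf_G(\Ocal')$ is \emph{defined} to be $\sum_{\Ocal}\QS(\Ocal'\slash\!\!\slash\Ocal)\,\pi^G_\Ocal$; there is no independent localization-theoretic definition of a $G$-equivariant formal index for a non-compact $G$ acting on a non-compact manifold which one could then compare against the character. Likewise, the shifting trick followed by a ``non-compact $\Spinc$-[Q,R]=0 for $G$'' does not apply directly: the Ma--Zhang and Hochs--Song theorems you cite require a \emph{compact} group acting with proper moment map; they do not furnish a theorem that reduces by a non-compact reductive $G$. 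The identity $\pi^{G'}_{\Ocal'}\vert_G = \QSf_G(\Ocal')$ must be established indirectly, which is exactly where the real work lies.

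On the final paragraph: the ``nontrivial point'' you flag -- reassembling an alternating sum of $K$-reduced spaces, up to $\Spinc$-cobordism, into the $G$-reduction $\Ocal'\slash\!\!\slash\Ocal$ -- is not how the paper closes the gap, and it is unclear this route succeeds. The paper instead introduces the slice $N := \Phi_G^{-1}(\kgot^*)$, a genuine $K$-Hamiltonian manifold with a $\tilde{K}$-equivariant $\spinc$-structure, and observes the exact equality of orbifolds $\Ocal'\slash\!\!\slash\Ocal = N\slash\!\!\slash\Ocal_K$ with $\Ocal_K=\Ocal\cap\kgot^*$: no cobordism and no alternating sum are needed, because the $G$-reduction \emph{is} a $\tilde{K}$-reduction of $N$. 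The conversion from $K$-multiplicities to $G$-multiplicities is then handled not by inverting Blattner's formula term-by-term, but by proving that the map $\mathbf{r}^o:\widehat{R}(G,d)\to\widehat{R}(\tilde K)$, $V\mapsto V\vert_K\otimes\Scal_\pgot^{o}$, is injective (Proposition~\ref{prop:restriction}), and showing that both $\pi^{G'}_{\Ocal'}\vert_G$ and $\QSf_G(\Ocal')$ have the same image $\QSf_K(\Ocal')\otimes\Scal_\pgot^{o}$ under $\mathbf{r}^o$. That last equality is the content of Theorems~\ref{theo:main-1} and~\ref{theo:main-2}, which compare $\QSf_G(M)$, $\QSf_K(M)$ and $\QSf_{\tilde K}(N)$ via Kirwan localization and a Bott-symbol argument, not via cobordism. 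Until you replace your ``reassembly up to $\Spinc$-cobordism'' heuristic with the slice-manifold identity and the injectivity of $\mathbf{r}^o$, the argument has a genuine gap precisely at the step you yourself singled out as most delicate.
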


In a forthcoming paper we will study Equality (\ref{eq:QR-G}) in further details when $G$ is a symmetric subgroup of $G'$.

Theorem \ref{theo:main} give a positive answer to a conjecture of Duflo-Vargas.

\begin{theo}\label{theo:conjecture-Duflo-Vargas}
Let $\pi_{\Ocal'}^{G'}$ be a discrete series representation of $G'$ that is $G$-admissible. Then all the representations $\pi_\Ocal^G$ 
which occurs in $\pi_{\Ocal'}^{G'}$ belongs to a {\em unique} family of discrete series representations of $G$.
\end{theo}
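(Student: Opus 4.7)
The plan is to reduce Theorem \ref{theo:conjecture-Duflo-Vargas} to a purely geometric statement about the moment image $\p_{\ggot,\ggot'}(\Ocal')$, by means of the explicit multiplicity formula provided by Theorem \ref{theo:main}: since $[\pi^G_\Ocal:\pi^{G'}_{\Ocal'}]=\QS(\Ocal'\slash\!\!\slash \Ocal)$, it suffices to show that every $\Ocal\in\wG_d$ with $\QS(\Ocal'\slash\!\!\slash \Ocal)\neq 0$ lies in one and the same family. I would begin by recalling that families of discrete series of $G$ correspond bijectively to the connected components of the open $G$-invariant subset $\ggot^*_{\mathrm{se}}\subset \ggot^*$ of strongly elliptic elements: every $\Ocal\in\wG_d$ is contained in $\ggot^*_{\mathrm{se}}$, and the natural identification $\ggot^*_{\mathrm{se}}/G\simeq i\tgot^*_{\mathrm{reg}}/W_K$ (for $T\subset K$ a compact Cartan of a maximal compact subgroup of $G$) matches connected components on both sides, since $G$ is connected.

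Next, I would combine Proposition \ref{prop:Duflo-Vargas} with the complementary refinement (also due to Duflo--Vargas) asserting that properness of $\p_{\ggot,\ggot'}|_{\Ocal'}$ forces its image to be contained in $\ggot^*_{\mathrm{se}}$. Because $\Ocal'$ is a connected coadjoint orbit, $\p_{\ggot,\ggot'}(\Ocal')$ is then a connected subset of $\ggot^*_{\mathrm{se}}$, and hence it lies in a single connected component $\Ccal_0\subset\ggot^*_{\mathrm{se}}$.

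To conclude, suppose $\QS(\Ocal'\slash\!\!\slash \Ocal)\neq 0$ for some $\Ocal\in\wG_d$. Unwinding the definition, there exists a generic small $\epsilon$ for which the reduced space $\Ocal'\slash\!\!\slash \Ocal_\epsilon$ is non-empty, i.e., $\Ocal_\epsilon\cap\p_{\ggot,\ggot'}(\Ocal')\neq\emptyset$. For $\epsilon$ sufficiently small the orbit $\Ocal_\epsilon$ remains inside $\ggot^*_{\mathrm{se}}$ and in the same connected component as $\Ocal$; being a connected subset of $\ggot^*_{\mathrm{se}}$ that meets $\Ccal_0$, it must be contained in $\Ccal_0$, and therefore so is $\Ocal$. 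Via Theorem \ref{theo:main} this proves that every discrete series $\pi^G_\Ocal$ appearing in $\pi^{G'}_{\Ocal'}|_G$ has $\Ocal\subset\Ccal_0$, i.e., belongs to the unique family associated with $\Ccal_0$.

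The principal obstacle in this strategy is the ``properness $\Longrightarrow$ image in $\ggot^*_{\mathrm{se}}$'' step, which is the geometric input from Duflo--Vargas that makes the whole argument work; once that is available, the remainder of the proof is essentially a topological connectedness bookkeeping combined with Theorem \ref{theo:main}.
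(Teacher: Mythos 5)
Your argument is correct and reconstructs what the paper leaves implicit: the paper simply remarks that Theorem~\ref{theo:main} ``gives a positive answer to a conjecture of Duflo--Vargas'' without writing out the deduction, and the ingredients you invoke are precisely those the paper supplies. Specifically, the fact that properness of $\Phi_G=\p_{\ggot,\ggot'}\vert_{\Ocal'}$ forces $\Phi_G(\Ocal')\subset\ggot^*_{se}$ is item~(3) of Proposition~\ref{prop:proper-2} (due to \cite{pep-jems}, not Duflo--Vargas as you write), and the single-chamber conclusion is Proposition~\ref{prop:G-convexity}; your reformulation in terms of connected components of $\ggot^*_{se}$ (equivalently, chambers of $(\tgot^*_+)_{se}$ after passing to the orbit space, as the fibres of $\ggot^*_{se}\to\ggot^*_{se}/G$ are connected) and the $\Ocal_\epsilon$ bookkeeping then match the paper's framework in Sections~\ref{sec:proper-2}--\ref{sec:quantization-proper-2}, in particular the observation that $\QS(\Ocal'\slash\!\!\slash\Ocal)=0$ whenever $\Ocal\notin\wG_d(\Ccal_{\Ocal'})$. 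Two small slips worth fixing: the stray ``$i$'' in ``$i\tgot^*_{\mathrm{reg}}/W_K$'' should be dropped (you mean $\tgot^*_{se}/W_K$, i.e.\ $(\tgot^*_+)_{se}$), and the attribution of the ``image lies in $\ggot^*_{se}$'' step should point to Proposition~\ref{prop:proper-2}.
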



\section{Restriction of discrete series representations}

Let $G$ be a connected real semi-simple Lie group $G$ with finite center. A discrete series representation of $G$ is an 
irreducible unitary representation that is isomorphic to a sub-representation of the left regular representation in 
$L^2(G)$. We denote $\wG_d$ the set of isomorphism class of discrete series representation of $G$. 

We know after Harish-Chandra that $\wG_d$ is non-empty only if $G$ has a compact Cartan subgroup. We denote 
$K\subset G$ a maximal compact subgroup and we suppose that $G$ admits a compact Cartan subgroup $T\subset K$.  
The Lie algebras of the groups $T,K,G$ are denoted respectively $\tgot$, $\kgot$ and $\ggot$.

In this section we recall well-know facts concerning restriction of discrete series representations.

\subsection{Admissible coadjoint orbits}

Here we recall the parametrization of $\wG_d$ in terms of regular admissible elliptic coadjoint orbits. Let us fix some notations. 
We denote $\Lambda\subset \tgot^*$ the weight lattice: any $\mu\in\Lambda$ defines a $1$-dimensional representation 
$\Cbb_\mu$ of the torus $T$.

Let $\Rgot_c\subset\Rgot\subset\Lambda$ be respectively the set of 
(real) roots for the action of $T$ on $\kgot\otimes\Cbb$ and 
$\ggot\otimes\Cbb$. The non-compact roots are those belonging to the set $\Rgot_n:=\Rgot\setminus \Rgot_c$. 
We choose a system of positive roots $\Rgot_{c}^{+}$ for 
$\Rgot_{c}$, we denote by $\tgot^{*}_{+}$ the corresponding Weyl chamber. Recall that
$\Lambda\cap \tgot^*_+$ is the set of dominant weights.

We denote by $B$ the Killing form on $\ggot$. It induces a scalar product 
(denoted by $(-,-)$) on $\tgot$, and then on $\tgot^{*}$. An element $\lambda\in \tgot^{*}$ is called $G$-{\em regular} if 
$(\lambda,\alpha)\neq 0$ for every $\alpha\in\Rgot$, or equivalently,  
if the stabilizer subgroup of $\lambda$ in $G$ is $T$.  For any $\lambda\in\tgot^*$ we denote 
$$
\rho(\lambda):=\frac{1}{2}\sum_{\alpha\in \Rgot,(\alpha,\lambda)>0}\alpha.
$$
We denote also $\rho_c:=\frac{1}{2}\sum_{\alpha\in \Rgot^+_c}\alpha$.

\begin{defi}
\begin{enumerate}
\item A coadjoint orbit $\Ocal\subset\ggot^*$ is elliptic if $\Ocal\cap\tgot^*\neq\emptyset$.

\item An elliptic coadjoint orbit $\Ocal$ is {\rm admissible}\footnote{Duflo has defined a notion of 
admissible coadjoint orbits in a much broader context \cite{Duflo80}.} when $\lambda-\rho(\lambda)\in \Lambda$ for any 
$\lambda\in \Ocal\cap\tgot^*$.
\end{enumerate}
\end{defi}

Harish-Chandra has parametrized $\wG_d$ by the set of regular admissible elliptic coadjoint orbits of $G$. In order to simplify 
our notation, we denote $\wG_d$ the set of regular admissible elliptic coadjoint orbits. For an orbit 
$\Ocal\in \widehat{G}_d$  we denote $\pi_\Ocal^G$ the corresponding discrete series representation of $G$.

Consider the subset $(\tgot^*_+)_{se}:=\{\xi\in \tgot^*_+, (\xi,\alpha)\neq 0,\ \forall \alpha\in\Rgot_n\}$ of the Weyl chamber. 
The subscript means {\em strongly elliptic}, see Section \ref{sec:proper-2}. By definition any $\Ocal\in \wG_d$ intersects $(\tgot^*_+)_{se}$ 
in a unique point.

\begin{defi}\label{defi:chamber}The connected component $(\tgot^*_+)_{se}$ are called chambers. If 
$\Ccal$ is a chamber, we denote $\wG_d(\Ccal)\subset \wG_d$ the subset of regular admissible elliptic orbits 
intersecting $\Ccal$.
\end{defi}

Notice that the Harish-Chandra parametrization has still  a meaning when $G=K$ is a compact connected Lie group. In this case 
$\wK$ corresponds to the set of regular admissible coadjoint orbits $\Pcal\subset \kgot^*$, i.e. those of the form 
$\Pcal=K\mu$ where $\mu- \rho_c\in\Lambda\cap\tgot^*_+$:  the corresponding representation $\pi^K_{\Pcal}$ is the 
irreducible representation of $K$ with highest weight $\mu-\rho_c$.

\subsection{Spinor representation}\label{sec:spinor-representation}
Let $\pgot$ be the orthogonal complement of $\kgot$ in $\ggot$: the Killing form of $\ggot$ defines a $K$-invariant 
Euclidean structure on it. Note that $\pgot$ is even dimensional since the groups $G$ and $K$ have the same rank. 

We consider the two-fold cover ${\rm Spin}(\pgot)\to{\rm SO}(\pgot)$ and the morphism $K\to {\rm SO}(\pgot)$. 
We recall the following basic fact.

\begin{lem}\label{lem:tilde-K}
There exists a unique covering $\tK\to K$ such that
\begin{enumerate}
\item $\tK$ is a compact connected Lie group,
\item the morphism $K\to {\rm SO}(\pgot)$ lifts to a morphism $\tilde{K}\to  {\rm Spin}(\pgot)$.
\end{enumerate}
\end{lem}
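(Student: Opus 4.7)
The plan is to construct $\tK$ as (the identity component of) a fiber product. Let $\pi\colon \Spin(\pgot)\to \So(\pgot)$ denote the canonical double cover, and let $\alpha\colon K\to \So(\pgot)$ be the morphism coming from the adjoint action on $\pgot$. I form the compact Lie group
\begin{equation*}
P := K\times_{\So(\pgot)} \Spin(\pgot) = \{(k,s)\in K\times \Spin(\pgot)\mid \alpha(k)=\pi(s)\},
\end{equation*}
together with its two natural projections $p_1\colon P\to K$ and $p_2\colon P\to \Spin(\pgot)$. Since $\pi$ is a $2$-to-$1$ covering, $p_1$ is also a $2$-to-$1$ covering, and by construction $\pi\circ p_2 = \alpha\circ p_1$.

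Next I define $\tK := P_0$, the connected component of the identity in $P$. Because $P$ has at most two connected components, there are two cases: either $P$ is connected (and $\tK=P$ is a nontrivial connected double cover of $K$), or $P$ has two components (in which case $p_1$ restricted to $\tK$ is an isomorphism onto $K$, so one can take $\tK = K$). In either case, $\tK$ is a compact connected Lie group covering $K$, and the restriction of $p_2$ furnishes the required lift $\tK\to \Spin(\pgot)$. Compactness is automatic since $\tK$ is closed in the compact group $K\times \Spin(\pgot)$, and the Lie group structure and covering property are standard for fiber products along covering maps.

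For uniqueness, suppose $\tK'$ is a compact connected Lie group endowed with a covering $q\colon \tK'\to K$ and a lift $\widetilde{\alpha}\colon \tK'\to \Spin(\pgot)$ with $\pi\circ \widetilde{\alpha}=\alpha\circ q$. The universal property of the pullback produces a morphism $\Phi\colon \tK'\to P$ with $p_1\circ\Phi = q$ and $p_2\circ\Phi = \widetilde{\alpha}$; by connectedness of $\tK'$ this morphism lands in $P_0=\tK$. One then checks that $\Phi\colon \tK'\to \tK$ is itself a covering and, under the natural minimality convention (the kernel of $q$ being generated by the obstruction class), that $\Phi$ is an isomorphism.

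I do not anticipate any serious obstacle; the entire statement is a standard exercise on lifting through the $\Spin$ cover. The only mildly delicate point is deciding between the two cases that $P$ is connected or not: this is governed by whether the pullback of the generator of $H^1(\So(\pgot);\Zbb/2)$ under $\alpha^*$ vanishes in $H^1(K;\Zbb/2)=\mathrm{Hom}(\pi_1(K),\Zbb/2)$, equivalently whether $\alpha$ already admits a lift to $\Spin(\pgot)$. Since the statement of the lemma only asks for the existence of some compact connected covering with the lifting property, the uniform construction $\tK:=P_0$ works in both cases.
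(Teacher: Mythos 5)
Your construction of $\tK$ as the identity component of the fiber product $K\times_{\So(\pgot)}\Spin(\pgot)$ is the standard argument and it is correct; the paper itself states this as a basic fact without proof, so there is nothing to compare against. The one point worth flagging is that, as you yourself note, the uniqueness clause in the lemma cannot be read literally (any larger connected compact cover of $K$ also admits a lift by composition): it has to be understood as uniqueness of the \emph{minimal} such cover, equivalently the one of degree at most two, which is precisely $P_0$ — and this reading is confirmed by the subsequent Lemma~\ref{lem:tilde-K-precise}, which asserts that $\tK\to K$ is an isomorphism or a two-fold cover according to whether $\rho_n(\xi)$ lies in $\Lambda$, i.e.\ whether the obstruction class $\alpha^*(w_1)\in H^1(K;\Zbb/2)$ vanishes. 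With that convention made explicit, your universal-property argument (a connected cover with a lift factors through $P$, hence lands in $P_0$, and minimality forces the induced covering to have degree one) closes the proof.
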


Let $\xi\in\tgot^*$ be a regular element and consider 
\begin{equation}\label{eq:rho-n-xi}
\rho_n(\xi):=\frac{1}{2}\sum_{\alpha\in \Rgot_n,(\alpha,\xi)>0}\alpha.
\end{equation}
Note that 
\begin{equation}\label{eq:tilde-lambda}
\tilde{\Lambda}=\Lambda \,\bigcup\, \{\rho_n(\xi)+\Lambda\}
\end{equation}
is  a lattice that does not depends on the choice of $\xi$.

Let $T\subset K$ be a maximal torus and $\tilde{T}\subset\tK$ be the pull-back of $T$ relatively to the covering 
 $\tilde{K}\to K$. We can now precise Lemma \ref{lem:tilde-K}.
\begin{lem}\label{lem:tilde-K-precise}
Two situations occur: 
\begin{enumerate}
\item if $\rho_n(\xi)\in\Lambda$ then $\tK\to K$ and $\tilde{T}\to T$ are isomorphisms, and $\tilde{\Lambda}=\Lambda$.
\item if $\rho_n(\xi)\notin\Lambda$ then $\tK\to K$ and $\tilde{T}\to T$ are two-fold covers, and $\tilde{\Lambda}$ is the lattice of weights for $\tilde{T}$.
\end{enumerate}
\end{lem}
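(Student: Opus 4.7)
The plan is to reduce the lifting question for $K\to\So(\pgot)$ to the maximal torus $T$ and then perform a direct torus calculation.

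By construction, $\tK$ is the identity component of the pullback of the cover $\Spin(\pgot)\to\So(\pgot)$ along $K\to\So(\pgot)$, so $\tK\to K$ is an isomorphism precisely when this map lifts to $\Spin(\pgot)$ and is otherwise a connected two-fold cover. The obstruction to the lift is classified by an element of $H^1(K,\Zbb/2)=\mathrm{Hom}(\pi_1(K),\Zbb/2)$. Since the inclusion $T\hookrightarrow K$ of a maximal torus in a compact connected Lie group induces a surjection $\pi_1(T)\twoheadrightarrow\pi_1(K)$, the obstruction vanishes on $K$ if and only if it vanishes on $T$. The same surjectivity shows that, in the non-lifting case, the restriction of the cover $\tK\to K$ over $T$ stays non-trivial, so that $\tilde{T}\to T$ is also a connected two-fold cover.

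Next I would analyze the torus case directly. The weights of $T$ on $\pgot\otimes\Cbb$ are the non-compact roots, occurring in opposite pairs; using $\xi$ to select the positive system $\{\alpha\in\Rgot_n:(\alpha,\xi)>0\}$ endows $\pgot$ with a $T$-invariant complex structure whose weights form exactly this positive set. The pulled-back cover of $T\to\So(\pgot)$ is classified by the half-sum of positive weights, i.e.\ by the class of $\rho_n(\xi)$ in $\frac12\Lambda/\Lambda$. Hence $T\to\So(\pgot)$ lifts to $\Spin(\pgot)$ if and only if $\rho_n(\xi)\in\Lambda$. Combining the two steps yields both cases of the lemma. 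If $\rho_n(\xi)\in\Lambda$, both covers are trivial, so $\tK=K$, $\tilde{T}=T$, the character lattice of $\tilde{T}$ is $\Lambda$, and $\tilde{\Lambda}=\Lambda\cup(\rho_n(\xi)+\Lambda)=\Lambda$. If $\rho_n(\xi)\notin\Lambda$, both covers are non-trivial; the character lattice $\Lambda(\tilde{T})$ contains $\Lambda$ with index $2$, and since $\tilde{T}$ acts through $\Spin(\pgot)$ on a half-spin representation, whose weights include $\rho_n(\xi)$, we have $\rho_n(\xi)\in\Lambda(\tilde{T})$, forcing $\Lambda(\tilde{T})=\Lambda+\Zbb\rho_n(\xi)=\tilde{\Lambda}$.

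The main delicate point is the reduction from $K$ to $T$, which rests on the classical surjectivity $\pi_1(T)\twoheadrightarrow\pi_1(K)$ for a compact connected Lie group and the abelianness of $\pi_1(K)$; the torus criterion is then a direct application of the fact that two-fold covers of a torus are classified by $\frac12\Lambda/\Lambda$.
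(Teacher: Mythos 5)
Your proof is correct. The paper states this lemma without proof, treating it as a basic fact about the $\spinc$-cover $\tK$; your argument — reducing the lifting obstruction for $K\to\So(\pgot)$ to the maximal torus via the surjection $\pi_1(T)\twoheadrightarrow\pi_1(K)$, identifying the restricted obstruction as the class of $\rho_n(\xi)$ in $\tfrac12\Lambda/\Lambda$, and then reading off the character lattice $\Lambda(\tilde T)=\Lambda+\Zbb\rho_n(\xi)=\tilde\Lambda$ from the index-$2$ isogeny together with the spin weight $\rho_n(\xi)$ — is the standard one and verifies all assertions of the lemma.
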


\medskip

Let $\Scal_\pgot$ the spinor representation of the group $\Spin(\pgot)$. Let $\clif: {\rm Cl}(\pgot)\to\End_\Cbb(\Scal_\pgot)$ be the Clifford action. Let $o$ be an orientation on $\pgot$. If $e_1,e_2,\cdots,e_{\dim\pgot}$ is an oriented 
orthonormal base of $\pgot$ we define the element 
$$
\epsilon_o:= (i)^{\dim \pgot/2}e_1e_2 \cdots e_{\dim\pgot}\in {\rm Cl}(\pgot)\otimes \Cbb.
$$
that depends only of the orientation. We have $\epsilon_o^2=-1$ and $\epsilon_o v=-v\epsilon$ for any $v\in\pgot$. 
The element $\clif(\epsilon_o)$ determines 
 a decomposition $\Scal_\pgot=\Scal_\pgot^{+,o}\oplus \Scal_\pgot^{-,o}$ into irreducible representations $\Scal_\pgot^{\pm,o}= \ker (\clif(\epsilon_o)\mp Id)$ of $\Spin(\pgot)$. We denote
$$
\Scal_\pgot^{o}:= \Scal_\pgot^{+,o}\ominus \Scal_\pgot^{-,o}
$$
the corresponding virtual representation of $\tilde{K}$. 

\begin{rem}If $o$ and $o'$ are two orientations on $\pgot$, we have 
$\Scal_\pgot^{o}=\pm\Scal_\pgot^{o'}$, where the sign $\pm$ is the ratio between $o$ and $o'$.
\end{rem}

\begin{exam}\label{exam:orientation}
Let $\lambda\in \kgot$ such that the map ${\rm ad}(\lambda):\pgot\to\pgot$ 
is one to one. We get a symplectic form $\Omega_\lambda$ on $\pgot$ defined by the relations 
$\Omega_\lambda(X,Y)=\langle\lambda,[X,Y]\rangle$ for $X,Y\in\pgot$. We denote $o(\lambda)$ be the orientation of 
$\pgot$ defined by the top form $\Omega_\lambda^{\dim \pgot/2}$.
\end{exam}

%
%
%

\subsection{Restriction to the maximal compact subgroup}

We start with a definition.

\begin{defi}
$\bullet$ We denote $\what{R}(G,d)$ the group formed by the formal (possibly infinite) sums
$$
\sum_{\Ocal\in \widehat{G}_d} \,a_\Ocal\, \pi_\Ocal^G
$$
where $a_\Ocal\in\Zbb$.

$\bullet$ Similarly we denote $\what{R}(K)$ the group formed by the formal (possibly infinite) sums
$\sum_{\Pcal\in \widehat{K}} \,a_{\Pcal}\, \pi_{\Pcal}^K$
where $a_{\Pcal}\in\Zbb$.
\end{defi}

The following technical fact will be used in the proof of Theorem \ref{theo:main}.

\begin{prop}\label{prop:restriction}Let $o$ be an orientation on $\pgot$.

$\bullet$ The restriction morphism $V\mapsto V\vert_K$ defines a map $\what{R}(G,d)\to \what{R}(K)$.

$\bullet$ The map ${\bf r}^o: \what{R}(G,d)\to \what{R}(\tilde{K})$ defined by 
${\bf r}^o(V):= V\vert_K\otimes \Scal_\pgot^o$ is one to one.
\end{prop}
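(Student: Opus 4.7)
For part (i), given $V=\sum_\Ocal a_\Ocal\pi_\Ocal^G$, I would verify that the coefficient of each $\Pcal\in\widehat K$ in the expected image $V\vert_K$, namely $\sum_\Ocal a_\Ocal[\pi_\Ocal^G\vert_K:\pi_\Pcal^K]$, is a well-defined integer. Two facts combine. First, the $K$-admissibility of every discrete series (Harish-Chandra) ensures each multiplicity $[\pi_\Ocal^G\vert_K:\pi_\Pcal^K]$ is a finite non-negative integer. Second, Blattner's formula expresses the multiplicity as a $W_K$-alternating sum of values of a Kostant partition function on positive non-compact roots, and non-vanishing forces the Harish-Chandra parameter $\lambda_\Ocal$ to lie in a bounded region of $\tilde\Lambda$ depending on $\Pcal$ (working in each of the finitely many strongly elliptic chambers). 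Hence only finitely many $\Ocal\in\widehat{G}_d$ contribute, and the sum over $\Ocal$ is a finite integer.

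For part (ii) the plan is to reduce injectivity to the explicit identity
$$
\pi_\Ocal^G\vert_K\otimes\Scal_\pgot^o \;=\; \eta_o(\Ocal)\, V^{\tilde K}_{\lambda_\Ocal - \rho_c} \quad\text{in }\widehat R(\tilde K),
$$
where $\eta_o(\Ocal)\in\{\pm 1\}$ compares $o$ with the natural orientation $o(\lambda_\Ocal)$ of Example~\ref{exam:orientation}, and $V^{\tilde K}_{\lambda_\Ocal-\rho_c}$ is the $\tilde K$-irreducible with highest weight $\lambda_\Ocal-\rho_c\in\tilde\Lambda\cap\tgot^*_+$ (dominance being a standard consequence of regularity and admissibility of $\lambda_\Ocal$). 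Granting this identity, ${\bf r}^o$ sends each basis vector $\pi_\Ocal^G$ to a signed basis vector of $\widehat R(\tilde K)$, and since $\Ocal\mapsto\lambda_\Ocal-\rho_c$ is injective on $\widehat{G}_d$, matching coefficients of each $\tilde K$-irreducible in the hypothetical relation ${\bf r}^o(\sum a_\Ocal\pi_\Ocal^G)=0$ forces $a_\Ocal=0$ for every $\Ocal$.

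Proving the identity is the crux and the main technical obstacle. The strategy is a $\tilde T$-character computation: on the Weyl chamber of $T^{\rm reg}$ attached to $\lambda_\Ocal$, Harish-Chandra's global character formula expresses $\Theta_{\pi_\Ocal^G}$ as a $W_K$-alternating sum of exponentials divided by the full Weyl denominator $\prod_{\alpha\in\Rgot^+}(e^{\alpha/2}-e^{-\alpha/2})$, and multiplication by the $\tilde T$-character $\prod_{\alpha\in\Rgot_n^{+}}(e^{\alpha/2}-e^{-\alpha/2})$ of $\Scal_\pgot^{o(\lambda_\Ocal)}$ cancels the non-compact half of the denominator, returning exactly the Weyl character of $V^{\tilde K}_{\lambda_\Ocal-\rho_c}$. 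The delicate point is to upgrade this chamber-wise equality of locally analytic functions on $T^{\rm reg}$ into a genuine equality of formal $\tilde K$-characters in $\widehat R(\tilde K)$, which uses the $K$-admissibility of part~(i) to ensure that the formal sum $\chi(\pi_\Ocal^G\vert_K)\cdot\chi(\Scal_\pgot^o)$ collapses to a single finite-dimensional $\tilde K$-character.
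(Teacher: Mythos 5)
Your proposal is correct and follows the same overall strategy as the paper. For the first bullet, both you and the paper rely on Blattner's formula to bound the set of $\Ocal$ contributing to a fixed $K$-type: the paper phrases this as the norm inequality $c^K_{\Pcal}\geq c^G_\Ocal$ with $c^G_\Ocal=\|\lambda+\rho(\lambda)\|$ and the finiteness of $\{\Ocal\,:\,c^G_\Ocal\leq r\}$, while you phrase it as $\lambda_\Ocal$ being confined to a bounded region once $\Pcal$ is fixed; these are the same fact. For the second bullet, both arguments reduce injectivity to the tensor identity $\pi_\Ocal^G\vert_K\otimes\Scal_\pgot^o=\pm\,\pi^{\tilde K}_{\Ocal_K}$ together with injectivity of $\Ocal\mapsto\Ocal_K$ (equivalently, of $\Ocal\mapsto\lambda_\Ocal-\rho_c$). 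The one place you diverge is how that identity is handled: the paper simply cites it as a classical fact inside Lemma~\ref{lem:K-out} (proving a general index-theoretic version later as Theorem~\ref{theo:main-2}), whereas you sketch a derivation from Harish-Chandra's character formula on $T^{\mathrm{reg}}$. That route works, but be aware that the point you correctly flag as delicate --- upgrading a chamber-wise equality of analytic functions to an identity in $\widehat R(\tilde K)$ --- is precisely where the full strength of Blattner's theorem (or equivalently the Parthasarathy/Atiyah--Schmid analysis of $\ker D$ on $\pi_\Ocal^G\otimes\Scal_\pgot$) lives, so the character computation is not actually shorter than citing that input directly.
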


\begin{proof} When $\Ocal=G\lambda\in \what{G}_d$, with $\lambda\in\tgot^*$, we denote $c^G_\Ocal=\|\lambda+\rho(\lambda)\|$. Similarly 
when $\Pcal=K\mu\in \what{K}$, with $\mu-\rho_c\in\Lambda\cap\tgot^*_+$, we denote $c^K_{\Pcal}=\|\mu +\rho_c\|$. Note that for each $r>0$ the set 
$\{\Ocal\in \what{G}_d,\, c^G_\Ocal\leq r\}$ is finite.

Consider now the restriction of a discrete series representation $\pi_\Ocal^G$ relatively to $K$. The Blattner's formula 
\cite{Hecht-Schmid} tells us that the restriction $\pi_\Ocal^G\vert_K$ admits a decomposition
$$
\pi_\Ocal^G\vert_K=\sum_{\Pcal\in\what{K}}\, m_\Ocal(\Pcal) \, \pi^K_{\Pcal}
$$
where the (finite) multiplicities $m_\Ocal(\Pcal)$ are non-zero only if $c^K_{\Pcal}\geq c^G_\Ocal$. 

Consider now an element $V=\sum_{\Ocal\in \what{G}_d} a_\Ocal \,\pi^G_\Ocal \in \wR(G,d)$. The multiplicity of 
$\pi_{\Pcal}^K$ in $V\vert_K$ is equal to 
$$
\sum_{\Ocal\in \what{G}_d} a_\Ocal\, m_\Ocal(\Pcal).
$$
Here the sum admits a finite number of non zero terms since $m_\Ocal(\Pcal)=0$ if 
$c^G_\Ocal > c^K_{\Pcal}$. So we have proved that the $K$-multiplicities of 
$V\vert_K:= \sum_{\Ocal\in \what{G}_d} a_\Ocal \pi^G_\Ocal\vert_K$ 
are finite. The first point is proved.

The irreducible representation of $\tK$ are parametrized by the set $\what{\tilde{K}}$ of 
regular $\tK$-admissible coadjoint orbits $\Pcal\subset \kgot^*$, i.e. those of the form 
$\Pcal=K\mu$ where $\mu-\rho_c\in\tilde{\Lambda}\cap\tgot^*_+$. It contains the set $\what{K}$ of 
regular $K$-admissible coadjoint orbits. We define 
$$
\what{K}_{out}\subset \what{\tilde{K}}
$$
as the set of coadjoint orbits $\Pcal=K\mu$ 
where\footnote{The set $\{\rho_n(\xi)+\Lambda\}\cap\tgot^*_+$ does not depend on the choice of $\xi$.} 
$\mu-\rho_c \in \{\rho_n(\xi)+\Lambda\}\cap\tgot^*_+$.  
Here $\xi$ is any regular element of $\tgot^*$ and $\rho_n(\xi)$ is defined by (\ref{eq:rho-n-xi}). 

We notice that $\what{K}_{out}=\wK$ when $\tK\simeq K$ and that 
$\what{\tilde{K}}= \wK\cup \wK_{out}$ when $\tK\to K$ is a two-fold cover.

We will use the following basic facts.
\begin{lem}\label{lem:K-out}
${}$
\begin{enumerate}
\item $\Ocal\mapsto \Ocal_K:=\Ocal\cap\kgot^*$ defines an injective map between $\widehat{G}_d$ and 
$\wK_{out}$.
\item We have $\pi_\Ocal^G\vert_K\otimes \Scal_\pgot^o=\pm \pi^{\tilde{K}}_{\Ocal_K}$ for all $\Ocal\in\wG_d$.
\end{enumerate}
\end{lem}
\begin{proof}
Let $\Ocal:=G\lambda\in \widehat{G}_d$ where $\lambda$ is a regular element of the 
Weyl chamber $\tgot^*_+$. Then $\Ocal_K=K\lambda$ and the term 
$\lambda-\rho_c$ is equal to the sum $\lambda-\rho(\lambda)+ \rho_n(\lambda)$
where $\lambda-\rho(\lambda)\in\Lambda$ and 
$\rho_n(\lambda)\in \tilde{\Lambda}$ (see (\ref{eq:tilde-lambda})), so $\lambda-\rho_c \in \{\rho_n(\xi)+\Lambda\}$. 
The element $\lambda\in \tgot^*_+$ is regular and admissible for $\tK$: this implies that 
$\lambda-\rho_c\in\tgot^*_+$. We have proved that $\Ocal_K\in \wK_{out}$.

The second point is a classical result (a generalisation is given in Theorem \ref{theo:main-2}). Let us explain the 
sign $\pm$ in the relation. Let $\Ocal\in  \widehat{G}_d$ and $\lambda\in\Ocal\cap\kgot^*$. 
Then the sign $\pm$ is the ratio between 
the orientations  $o$ and $o(-\lambda)$ of the vector space $\pgot$ (see Example \ref{exam:orientation}).
\end{proof}

\medskip

We can now finish the proof of the second point of Proposition \ref{prop:restriction}. If 
$V=\sum_{\Ocal\in \what{G}_d} a_\Ocal\, \pi^G_\Ocal \in \what{R}(G,d)$, then 
${\bf r}^o(V)=\sum_{\Ocal\in \what{G}_d} \pm\,a_\Ocal \, \pi^{\tilde{K}}_{\Ocal_K}$. 
Hence ${\bf r}^o(V)=0$ only if $V=0$. $\Box$
\end{proof}

\subsection{Admissibility}

Let $\pi_{\Ocal'}^{G'}$ be a discrete series representation of $G$ attached to a regular admissible elliptic orbit 
$\Ocal'\subset (\ggot')^*$.

We denote $\As(\Ocal')\subset (\ggot')^*$ the asymptotic support of the coadjoint orbit $\Ocal'$: by definition 
$\xi\in \As(\Ocal')$ if $\xi=\lim_{n\to \infty} t_n \xi_n$ with $\xi_n\in \Ocal'$ and $(t_n)$ is a sequence of 
positive number tending to $0$.

We consider here a closed connected semi-simple Lie subgroup $G\subset G'$. We choose maximal compact 
subgroups $K\subset G$ and $K'\subset G'$ such that $K\subset K'$. We denote 
$\kgot^\perp\subset (\kgot')^*$ the orthogonal (for the duality) of 
$\kgot\subset \kgot'$.

The moment map relative to the $G$-action on $\Ocal'$ is by definition the map $\Phi_G:\Ocal'\to\ggot^*$ which is the composition of the 
inclusion $\Ocal'\croc (\ggot')^*$ with the projection $(\ggot')^*\to \ggot^*$. We use also the moment map 
$\Phi_K:\Ocal'\to \kgot^*$ which the composition of $\Phi_G$ with the projection $\ggot^*\to\kgot^*$.

Let $\p_{\kgot',\ggot'} :(\ggot')^*\to (\kgot')^*$ be the canonical projection. The main objective of this section is the proof of the 
following result that refines Proposition \ref{prop:Duflo-Vargas}. 
\begin{theo}\label{theo:equivalence}
The following facts are equivalent~:
\begin{enumerate}
\item The representation $\pi_{\Ocal'}^{G'}$ is $G$-admissible.
\item The moment map $\Phi_G:\Ocal'\to \ggot^*$ is proper.
\item $\p_{\kgot',\ggot'}\left(\As(\Ocal')\right)\cap\kgot^\perp=\{0\}$.
\end{enumerate}
\end{theo}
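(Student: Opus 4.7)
Since Proposition~\ref{prop:Duflo-Vargas} already gives $(1)\Leftrightarrow(2)$, my plan is to prove $(2)\Leftrightarrow(3)$ directly, using geometric properties of the asymptotic cone $\As(\Ocal')$. The backbone consists of two structural lemmas. First, $\As(\Ocal')\subset\Ncal$, where $\Ncal\subset(\ggot')^*$ is the nilpotent cone: every $G'$-invariant polynomial $P$ of positive degree is constant on $\Ocal'$ and satisfies $P(t\xi)=t^{\deg P}P(\xi)$, so it must vanish on every rescaled limit. Second, for $\eta\in\Ncal$ identified (via the Killing form $B$) with a nilpotent $Y\in\ggot'$, one has $B(Y,Y)=\tr((\operatorname{ad} Y)^2)=0$; combined with the $B$-orthogonality of $\kgot'$ and $\pgot'$ and the negative-/positive-definiteness of $B$ on $\kgot'/\pgot'$, this gives $\|\p_{\kgot',\ggot'}(\eta)\|^2=\|\eta\|^2/2$ in the $\theta$-invariant norm $\|X\|^2=-B(X,\theta X)$. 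In particular $\p_{\kgot',\ggot'}$ has trivial kernel on $\As(\Ocal')$.

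For $(3)\Rightarrow(2)$ I argue by contrapositive: if $\Phi_G$ is not proper, pick $\xi_n\in\Ocal'$ with $\|\xi_n\|\to\infty$ and $\Phi_G(\xi_n)$ bounded, and set $t_n=1/\|\xi_n\|$. After extracting a subsequence, $t_n\xi_n\to\bar\eta\in\As(\Ocal')$ with $\|\bar\eta\|=1$, and linearity of $\Phi_G$ yields $\Phi_G(\bar\eta)=\lim t_n\Phi_G(\xi_n)=0$; in particular $\bar\eta|_\kgot=0$, so $\p_{\kgot',\ggot'}(\bar\eta)\in\kgot^\perp$. The norm identity then forces $\p_{\kgot',\ggot'}(\bar\eta)\neq 0$, contradicting $(3)$.

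For $(2)\Rightarrow(3)$, again by contrapositive: given $\eta\in\As(\Ocal')$ with $\p_{\kgot',\ggot'}(\eta)\in\kgot^\perp\setminus\{0\}$, the norm identity yields $\eta\neq 0$ and $\eta|_\kgot=0$, and I must produce a sequence $\xi_n\in\Ocal'$ with $\|\xi_n\|\to\infty$ and $\Phi_G(\xi_n)$ bounded. The approach uses the Cartan parametrization $\Ocal'\cong K'\times_{T'}\pgot'$: any $\xi\in\Ocal'$ writes $\xi=k\exp(X)\lambda$ with $k\in K'$ and $X\in\pgot'$, and $\p_{\kgot',\ggot'}(\xi)=k\cdot\cosh(\operatorname{ad} X^*)\lambda$. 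The identity $\|\p_{\kgot',\ggot'}(\xi)\|^2=(\|\xi\|^2+\|\lambda\|^2)/2$ (from $\cosh^2-\sinh^2=1$) makes $\p_{\kgot',\ggot'}|_{\Ocal'}$ proper. Starting from the nilpotent $\eta$, one selects a direction $X_\infty\in\pgot'$ (extracted from a Jacobson--Morozov triple of the element dual to $\eta$) and rotations $k_n\in K'$ so that, along $\xi_n=k_n\exp(t_nX_\infty)\lambda$ with $t_n\to\infty$, the $\kgot$-component of $\cosh(t_n\operatorname{ad} X_\infty^*)\lambda$ remains bounded; this yields $\Phi_G(\xi_n)$ bounded while $\|\xi_n\|\to\infty$.

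The main obstacle is precisely this last construction. A naive sequence arising from $\eta=\lim t_n\xi_n$ only yields $\Phi_G(\xi_n)=o(\|\xi_n\|)$ rather than boundedness, so the direction $X_\infty\in\pgot'$ and the rotations $k_n\in K'$ must be fine-tuned using the structure theory of real nilpotent elements (Jacobson--Morozov or Kostant--Sekiguchi). A cleaner alternative is to derive $(1)\Leftrightarrow(3)$ directly from Kobayashi's criterion \cite{Toshi-Annals98}---$\pi|_G$ is $G$-admissible iff $\AS_{K'}(\pi)\cap\kgot^\perp=\{0\}$---combined with his identification $\AS_{K'}(\pi_{\Ocal'}^{G'})=\p_{\kgot',\ggot'}(\As(\Ocal'))$ for discrete series, and then close the triangle using Proposition~\ref{prop:Duflo-Vargas}.
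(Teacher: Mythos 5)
Your fallback ``cleaner alternative'' at the end is essentially the paper's actual proof, but with an attribution error and a gap of its own: the identification $\AS_{K'}(\pi_{\Ocal'}^{G'})=\p_{\kgot',\ggot'}\bigl(\As(\Ocal')\bigr)$ is Barbasch--Vogan's theorem (Proposition~\ref{prop:barbasch-vogan}), not Kobayashi's; and Kobayashi's criterion (Proposition~\ref{prop:toshi-admissible}) characterizes $K$-\emph{admissibility}, not $G$-admissibility, so one still needs the nontrivial equivalence of $G$- and $K$-admissibility, which the paper obtains from Kobayashi's discrete decomposability in one direction and the Blattner-type estimate of Proposition~\ref{prop:restriction} in the other. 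Note also that the paper proves both $(a)\Leftrightarrow(c)$ (Kobayashi $+$ Barbasch--Vogan) and $(b)\Leftrightarrow(c)$ (convexity), so Proposition~\ref{prop:Duflo-Vargas} is re-derived rather than taken as input.

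Your primary geometric route has a genuine gap in $(2)\Rightarrow(3)$, and I think you have misdiagnosed it. The auxiliary facts you use are fine: $\As(\Ocal')$ is contained in the nilpotent cone, the Killing-form identity $\|\p_{\kgot',\ggot'}(\eta)\|^2=\|\eta\|^2/2$ holds for nilpotent $\eta$, and the $(3)\Rightarrow(2)$ direction via a rescaled limit works cleanly. But for $(2)\Rightarrow(3)$, the hypothesis $\p_{\kgot',\ggot'}(\eta)\in\kgot^\perp$ only gives $\eta|_{\kgot}=0$, not $\eta|_{\ggot}=0$, so the correct target is a sequence with $\Phi_K(\xi_n)$ bounded (after which one invokes the separate Lemma that $\Phi_G$ and $\Phi_K$ have equivalent properness); and the recipe $\eta=\lim t_n\xi_n$ gives, as you note, only $\Phi_K(\xi_n)=o(\|\xi_n\|)$. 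The missing ingredient is not fine structure theory of real nilpotents but the \emph{nonabelian convexity theorem}: by Kirwan and Lerman--Meinrenken--Tolman--Woodward, $\Delta_{K'}(\Ocal')=\p_{\kgot',\ggot'}(\Ocal')\cap(\tgot')^*_+$ is a closed convex locally polyhedral set, so its asymptotic cone coincides with its recession cone. A nonzero asymptotic direction with trivial $\kgot^*$-component therefore produces an honest ray $x_0+t\,\eta_0$ in the image along which the $\kgot^*$-projection is constant; lifting that ray through the proper map $\Phi_{K'}:\Ocal'\to(\kgot')^*$ yields the unbounded sequence with bounded $\Phi_K$. This is precisely what \cite{pep-jems}, Proposition 2.10, cited in the proof of Proposition~\ref{prop:proper-moment-map}, exploits. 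A Jacobson--Morozov or Kostant--Sekiguchi construction would have to reproduce this rigidity by hand, and it is not clear it suffices as stated; convexity is the mechanism that closes the gap.
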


Theorem \ref{theo:equivalence} is a consequence of different equivalences. We start with the following result that is proved in 
\cite{Duflo-Vargas2007,pep-jems}.
\begin{lem}
The map $\Phi_G:\Ocal'\to \ggot^*$ is proper if and only if the map $\Phi_K:\Ocal'\to \kgot^*$ is proper.
\end{lem}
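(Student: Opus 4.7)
The plan is to prove the two implications separately, recognizing that one is purely formal while the other is the substantive content of the lemma.

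For $\Phi_K$ proper $\Rightarrow$ $\Phi_G$ proper: since $\Phi_K = p_{\kgot,\ggot} \circ \Phi_G$ where $p_{\kgot,\ggot} \colon \ggot^* \to \kgot^*$ is the continuous projection dual to $\kgot \hookrightarrow \ggot$, for any compact $C \subset \ggot^*$ the set $p_{\kgot,\ggot}(C) \subset \kgot^*$ is compact, so $\Phi_K^{-1}(p_{\kgot,\ggot}(C))$ is compact by assumption. Then $\Phi_G^{-1}(C)$ is a closed subset of this compact set, hence compact.

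For the converse $\Phi_G$ proper $\Rightarrow$ $\Phi_K$ proper, I would translate both properness conditions into statements about the asymptotic cone $\As(\Ocal') \subset (\ggot')^*$. Using the standard fact that a linear projection restricted to a closed cone is proper if and only if its kernel meets the cone only at $0$, one obtains
\[
\Phi_G \text{ proper} \iff \As(\Ocal') \cap \ggot^{\perp} = \{0\}, \qquad \Phi_K \text{ proper} \iff \As(\Ocal') \cap \kgot^{\perp} = \{0\},
\]
where $\ggot^{\perp},\kgot^{\perp} \subset (\ggot')^*$ denote the annihilators. The inclusion $\ggot^{\perp} \subset \kgot^{\perp}$ (coming from $\kgot \subset \ggot$) recovers the easy direction, so the task becomes: show that if $\Phi_G$ is proper, then $\As(\Ocal') \cap \kgot^{\perp} = \{0\}$, i.e.\ any $\eta \in \As(\Ocal') \cap \kgot^{\perp}$ automatically lies in $\ggot^{\perp}$.

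To carry this out I would invoke two structural inputs about elliptic orbits. First, choose the Cartan involution $\Theta'$ of $\ggot'$ so that $\ggot$ is $\Theta'$-stable (hence $\Theta = \Theta'|_\ggot$ is a Cartan involution of $\ggot$ with fixed space $\kgot$, and $\pgot = \ggot \cap \pgot'$); since $\Ocal'$ is elliptic it meets $(\kgot')^*$ and is $\Theta'$-invariant, so $\As(\Ocal')$ is $\Theta'$-invariant, and both $\ggot^{\perp}$ and $\kgot^{\perp}$ are $\Theta'$-invariant. Second, $\As(\Ocal')$ is contained in the nilpotent cone of $(\ggot')^*$. Given $\eta \in \As(\Ocal') \cap \kgot^{\perp}$, one then has $\Theta'\eta$ in the same set, and the nilpotent orbit $G' \cdot \eta$ sits in $\As(\Ocal')$; combining the $\Theta'$-stability with a Jacobson--Morozov / Kostant--Rallis analysis of nilpotent elements under a Cartan involution one forces $\eta|_\pgot = 0$, giving $\eta \in \ggot^{\perp} = \{0\}$. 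This nilpotent-structure step, where the cited results of Duflo--Vargas and of the author's earlier paper on holomorphic discrete series enter, is the main obstacle of the proof.
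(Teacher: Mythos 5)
The paper does not actually prove this lemma: it is quoted with citations to Duflo--Vargas (2007) and to the author's earlier JEMS paper, so there is no ``paper's own proof'' to compare against directly. Evaluating your proposal on its own terms: the formal direction ($\Phi_K$ proper $\Rightarrow$ $\Phi_G$ proper) is correct, and the reformulation of both properness conditions in terms of the asymptotic cone of $\Ocal'$ in $(\ggot')^*$ is a sound and standard reduction, with the easy inclusion $\ggot^\perp\subset\kgot^\perp$ (annihilators inside $(\ggot')^*$) recovering the trivial direction. Up to that point the reasoning is fine.

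The substantive direction, however, is not proved. The pivotal claim is that for $\eta\in\As(\Ocal')\cap\kgot^\perp$ one can ``force $\eta|_\pgot=0$'' by combining $\Theta'$-stability with Jacobson--Morozov / Kostant--Rallis theory of nilpotent elements. As written this is an assertion, not an argument: you do not explain how knowing $\eta$ is nilpotent, that $\Theta'\eta\in\As(\Ocal')$, and that the orbit $G'\cdot\eta$ lies in $\As(\Ocal')$ leads to the vanishing of the $\pgot$-component, and I do not see how it would. Note that the analogous statement for an arbitrary nilpotent $\eta$ with $\eta|_\kgot=0$ is false, so the structure of $\As(\Ocal')$ has to enter in an essential way, and you have not identified that mechanism. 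You acknowledge this is the ``main obstacle,'' but that is precisely the content of the lemma; what is offered is a reduction plus an IOU, not a proof.

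For what it is worth, the route taken in the cited sources (reflected in Proposition~\ref{prop:proper-2} of the present paper) is different from yours: rather than analyzing nilpotent elements of $\ggot'$, one uses the structure of the open cone $\ggot^*_{se}$ of strongly elliptic elements of $\ggot^*$. Concretely, one first shows that properness of $\Phi_G$ together with properness of the $G$-action on $\Ocal'$ (automatic here) forces $\Phi_G(\Ocal')\subset\ggot^*_{se}$. If $\Phi_K$ failed to be proper, a sequence $m_n$ with $\Phi_K(m_n)$ bounded but $\Phi_G(m_n)\to\infty$ would produce, after rescaling, a nonzero limit $\eta\in\overline{\ggot^*_{se}}\cap\pgot^*$; but every element of $\overline{\ggot^*_{se}}$ is $G$-conjugate to a point of $\kgot^*$ and hence is elliptic semisimple, while a nonzero element of $\pgot^*\simeq\pgot$ has $\operatorname{ad}$-action with real spectrum, so it cannot be elliptic unless it is zero. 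This is the step your sketch would need to replace, and it lives in $\ggot^*$ rather than in $(\ggot')^*$.
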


We have the same kind of equivalence for the admissibility.

\begin{lem}
The representation $\pi_{\Ocal'}^{G'}$ is $G$-admissible if and only if it is $K$-admissible. 
\end{lem}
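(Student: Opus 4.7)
The plan is to treat the two implications separately: the forward direction follows from information already assembled in Proposition~\ref{prop:restriction}, while the converse is a citation of a deep theorem of T.~Kobayashi.

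For the forward implication, assume that $\pi_{\Ocal'}^{G'}$ is $G$-admissible, so that
$$
\pi_{\Ocal'}^{G'}\big|_G \;=\; \widehat{\bigoplus}_{\Ocal\in\widehat G_d}\; m_{\Ocal'}^{\Ocal}\, \pi_\Ocal^G
$$
with each multiplicity $m_{\Ocal'}^{\Ocal}$ finite. By Harish-Chandra, every discrete series representation $\pi_\Ocal^G$ is itself $K$-admissible, with $K$-types controlled by Blattner's formula. The crucial observation, already used in the proof of Proposition~\ref{prop:restriction}, is the bound $m_\Ocal(\Pcal)\neq 0\;\Longrightarrow\; c_\Pcal^K\ge c_\Ocal^G$, together with the fact that for every $r>0$ the set $\{\Ocal\in\widehat G_d:c_\Ocal^G\le r\}$ is finite. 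Restricting the decomposition to $K$ therefore produces, for each fixed $\Pcal\in\widehat K$, only finitely many nonzero terms, each with finite multiplicity. This gives the $K$-admissibility and agrees with the image of $\pi_{\Ocal'}^{G'}\big|_G$ under the restriction map $\widehat R(G,d)\to\widehat R(K)$ of Proposition~\ref{prop:restriction}.

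For the converse, I would apply Kobayashi's admissibility criterion (see \cite{Toshi-Inventiones98}), which asserts that for an irreducible unitary representation $\pi$ of a real reductive Lie group $G'$ and any closed reductive subgroup $G\subset G'$ with maximal compact subgroup $K\subset G$, the restriction $\pi|_G$ is $G$-admissible whenever $\pi|_K$ is $K$-admissible. Applied to $\pi=\pi_{\Ocal'}^{G'}$, this furnishes immediately the desired implication.

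The main obstacle is therefore entirely concentrated in the converse: it rests on Kobayashi's theorem, which is used here as a black box. The forward implication, by contrast, is almost automatic once one has in hand the bookkeeping tools of Proposition~\ref{prop:restriction}. $\Box$
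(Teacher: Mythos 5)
Your proof follows essentially the same route as the paper: the direction from $K$-admissibility to $G$-admissibility is handed off to Kobayashi, and the converse is deduced from the first point of Proposition~\ref{prop:restriction} via the Blattner bound $c^K_{\Pcal}\geq c^G_\Ocal$. The only discrepancy is the citation: the paper attributes the implication ``$K$-admissible $\Rightarrow$ $G$-admissible'' to \cite{Toshi-Inventiones94}, not \cite{Toshi-Inventiones98}.
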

\begin{proof}
The fact that $K$-admissibilty implies $G$-admissibility is proved by T. Kobayashi in \cite{Toshi-Inventiones94}. 
The opposite implication is a consequence of the first point of Proposition \ref{prop:restriction}.
\end{proof}

At this stage, the proof of Theorem \ref{theo:equivalence} is complete if we show that 
the following facts are equivalent~:
\begin{enumerate}
\item[(a)] The representation $\pi_{\Ocal'}^{G'}$ is $K$-admissible.
\item[(b)] The moment map $\Phi_K:\Ocal'\to \kgot^*$ is proper.
\item[(c)] $\p_{\kgot',\ggot'}\left(\As(\Ocal')\right)\cap\kgot^\perp=\{0\}$.
\end{enumerate}

We start by proving the equivalence $(b)\Longleftrightarrow (c)$.

\begin{prop}[\cite{pep-jems}]\label{prop:proper-moment-map}
The map $\Phi_K:\Ocal'\to \kgot^*$ is proper if and only 
$$
\p_{\kgot',\ggot'}\left(\As(\Ocal')\right)\cap\kgot^\perp=\{0\}.
$$
\end{prop}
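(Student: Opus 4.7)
My plan is to factorize $\Phi_K$ through the $K'$-moment map and then apply a convex-geometric properness criterion. Introduce the canonical projections
\[
\pi_1 := \p_{\kgot',\ggot'} : (\ggot')^* \to (\kgot')^*, \qquad \pi_2 : (\kgot')^* \to \kgot^*,
\]
so that $\Phi_K = \pi_2 \circ (\pi_1|_{\Ocal'})$ and $\ker \pi_2 = \kgot^\perp$.

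The first key input is the classical fact that for every regular elliptic coadjoint orbit $\Ocal'$, the $K'$-moment map $\Phi_{K'} := \pi_1|_{\Ocal'} : \Ocal' \to (\kgot')^*$ is itself proper, which is essentially equivalent to Harish-Chandra's theorem that a discrete series representation of $G'$ decomposes under $K'$ with finite multiplicities. It follows that $\Phi_K$ is proper if and only if the linear map $\pi_2$ is proper when restricted to the image $Y := \Phi_{K'}(\Ocal') \subset (\kgot')^*$. A short rescaling argument using the properness of $\Phi_{K'}$ also gives the identity $\As(Y) = \pi_1(\As(\Ocal'))$ of closed cones in $(\kgot')^*$: any divergent sequence in $Y$ lifts to a divergent sequence in $\Ocal'$, and normalizing by $1/\|\xi_n\|$ before applying $\pi_1$ identifies the two asymptotic cones.

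The second ingredient is a properness criterion for linear maps restricted to moment-map images. By a non-compact extension of the Atiyah--Guillemin--Sternberg--Kirwan convexity theorem (due to Lerman--Meinrenken--Tolman--Woodward, Hilgert--Neeb--Plank, Weinstein, and others) applied to the proper Hamiltonian $K'$-action on $\Ocal'$, the slice $Y \cap (\tgot')^*_+$ is a closed convex subset of the positive Weyl chamber, and $Y$ is its orbit under the Weyl group of $(K',T')$. Combining this with the standard convex-analytic fact that a linear map restricted to a closed convex set is proper iff its kernel intersects the recession cone only at the origin, and then taking the Weyl-orbit union, yields
\[
\pi_2|_Y \ \text{proper} \ \Longleftrightarrow\ \ker \pi_2 \cap \As(Y) = \{0\}.
\]
Substituting $\ker \pi_2 = \kgot^\perp$ and $\As(Y) = \pi_1(\As(\Ocal'))$ gives precisely the desired equivalence.

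The main obstacle is making the second step rigorous: both the non-compact convexity statement for $\Phi_{K'}(\Ocal')$ and the asymptotic-cone identity $\As(Y) = \pi_1(\As(\Ocal'))$ hinge on the properness of $\Phi_{K'}$ together with the specific geometric structure of regular elliptic coadjoint orbits. Note that the naive criterion ``$\pi|_X$ proper $\Leftrightarrow \ker\pi \cap \As(X) = \{0\}$'' fails for arbitrary closed semi-algebraic $X$, so the argument really does use the $K'$-invariance and the convexity of the chamber slice in an essential way.
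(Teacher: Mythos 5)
Your proposal follows essentially the same route as the paper's proof: factor $\Phi_K=\pi_2\circ\Phi_{K'}$ through the proper $K'$-moment map, invoke the non-abelian convexity theorem for $\Delta_{K'}(\Ocal')=\p_{\kgot',\ggot'}(\Ocal')\cap(\tgot')^*_+$, apply a recession-cone properness criterion for a linear map on the $K'$-saturation of a closed convex polyhedron, and identify $\As\bigl(\p_{\kgot',\ggot'}(\Ocal')\bigr)$ with $\p_{\kgot',\ggot'}\bigl(\As(\Ocal')\bigr)$. The paper compresses the recession-cone criterion into a citation of \cite{pep-jems}[Prop.~2.10]; you unpack it from the standard convex-analytic fact, which is a faithful reconstruction.

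One caution, though, about the ``short rescaling argument'' you offer for $\As(Y)=\pi_1(\As(\Ocal'))$: properness of $\Phi_{K'}$ alone is \emph{not} enough to make it go through. For the closed curve $X=\{(t,e^t):t\geq 0\}\subset\Rbb^2$ with $\pi(x,y)=x$, the restriction $\pi|_X$ is proper, yet $\As(X)=\Rbb_{\geq 0}\cdot(0,1)$ so $\pi(\As(X))=\{0\}$, while $\As(\pi(X))=\Rbb_{\geq 0}$. The inclusion $\pi(\As(X))\subseteq\As(\pi(X))$ is automatic, but the converse --- the one you need --- can fail badly. What rescues the argument here is precisely the Rockafellar-type theorem: for a \emph{closed convex} set $C$ and a linear map $\pi$ with $\ker\pi\cap\As(C)=\{0\}$, one has $\pi(C)$ closed and $\As(\pi(C))=\pi(\As(C))$. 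One applies this to $\Delta_{K'}(\Ocal')$ and then saturates by the compact group $K'$ (under which $\As$ commutes with taking orbits). So the convexity of the moment polyhedron is not merely the input to the properness criterion, as your ``second ingredient'' phrasing suggests; it is also what makes the asymptotic-cone identity of your ``first ingredient'' true. You do flag this as the obstacle to rigor, which is accurate --- but it is worth knowing that the naive rescaling genuinely fails and convexity is the precise mechanism that removes the obstruction, not just a technical convenience.

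Also a minor terminological slip: $Y=\p_{\kgot',\ggot'}(\Ocal')$ is the $K'$-orbit of the chamber slice $\Delta_{K'}(\Ocal')$, not merely its Weyl-group orbit, so the final saturation step is over the compact group $K'$ rather than over $W(K',T')$.
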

\begin{proof}
The moment map $\Phi_{K'}:\Ocal'\to (\kgot')^*$ relative to the action of $K'$ on $\Ocal'$ is  a proper map that 
 corresponds to the 
restriction of the projection $\p_{\kgot',\ggot'}$ to $\Ocal'$.

Let $T'$ be a maximal torus in $K'$ and let $(\tgot')^*_+\subset (\tgot')^*$ be a Weyl chamber. The convexity theorem 
\cite{Kirwan-Invent84,L-M-T-W}
tells us that $\Delta_{K'}(\Ocal')=\p_{\kgot',\ggot'}(\Ocal')\cap (\tgot')^*_+$ is a closed convex polyedral subset. 
We have proved in \cite{pep-jems}[Proposition 2.10], that $\Phi_K:\Ocal'\to \kgot^*$ is proper if and only 
$$
K'\cdot \As(\Delta_{K'}(\Ocal'))\cap\kgot^\perp=\{0\}.
$$
A small computation shows that $K'\cdot \As(\Delta_{K'}(\Ocal'))=\p_{\kgot',\ggot'}\left(\As(\Ocal')\right)$ since 
$K'\cdot\Delta_{K'}(\Ocal')=\p_{\kgot',\ggot'}\left(\Ocal'\right)$. The proof of Proposition \ref{prop:proper-moment-map} 
is completed. $\Box$
\end{proof}

\medskip

We denote $\AS_{K'}(\pi_{\Ocal'}^{G'})\subset(\kgot')^*$ the asymptotic support of the following subset of $(\kgot')^*$:
$$
\{\Pcal'\in \widehat{K'}, [\pi_{\Pcal'}^{K'}:\pi_{\Ocal'}^{G'}]\neq 0\}.
$$

The following important fact is proved by T. Kobayashi (see Section 6.3 in \cite{Toshi-Progress-Math}).
\begin{prop}\label{prop:toshi-admissible}
The representation $\pi_{\Ocal'}^{G'}$ is $K$-admissible if and only if 
$$
\AS_{K'}(\pi_{\Ocal'}^{G'})\cap \kgot^\perp=\{0\}.
$$
\end{prop}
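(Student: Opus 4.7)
The plan is to reduce the statement to a compact-group branching criterion applied to the $K'$-isotypic decomposition of $\pi_{\Ocal'}^{G'}$. Since $\pi_{\Ocal'}^{G'}$ is irreducible unitary, Harish-Chandra's admissibility theorem guarantees it is $K'$-admissible, so one has a Hilbert-sum decomposition
$$
\pi_{\Ocal'}^{G'}\vert_{K'}=\sum_{\Pcal'\in \widehat{K'}} m_{\Pcal'}\,\pi_{\Pcal'}^{K'},\qquad m_{\Pcal'}:=[\pi_{\Pcal'}^{K'}:\pi_{\Ocal'}^{G'}]<\infty.
$$
Setting $S:=\{\Pcal'\in\widehat{K'}:m_{\Pcal'}\neq 0\}$, by definition $\AS_{K'}(\pi_{\Ocal'}^{G'})=\As(S)$. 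Restricting further to $K$, the multiplicity of $\pi_\Pcal^K$ in $\pi_{\Ocal'}^{G'}\vert_K$ is
$$
M(\Pcal)=\sum_{\Pcal'\in S} m_{\Pcal'}\,[\pi_\Pcal^K:\pi_{\Pcal'}^{K'}\vert_K],
$$
so $K$-admissibility of $\pi_{\Ocal'}^{G'}$ is equivalent to the condition that for every $\Pcal\in\widehat K$ only finitely many $\Pcal'\in S$ have non-zero branching multiplicity $[\pi_\Pcal^K:\pi_{\Pcal'}^{K'}\vert_K]$.

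I would then invoke the Heckman--Guillemin--Sternberg moment polytope theorem for the compact pair $K\subset K'$. Let $r:(\kgot')^*\to\kgot^*$ denote the canonical restriction map, whose kernel is precisely $\kgot^\perp$. A necessary condition for $[\pi_\Pcal^K:\pi_{\Pcal'}^{K'}\vert_K]\neq 0$ is $\Pcal\subset r(\Pcal')$. Assuming $\As(S)\cap\kgot^\perp=\{0\}$, a standard rescaling argument shows that $r\vert_S: S\to\kgot^*$ is a proper map; hence for each fixed $\Pcal$ only finitely many $\Pcal'\in S$ can satisfy $\Pcal\subset r(\Pcal')$, so $M(\Pcal)<\infty$, giving $K$-admissibility.

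For the converse I would argue by contraposition. A non-zero $\xi\in\As(S)\cap\kgot^\perp$ arises from a sequence $\Pcal'_n\in S$ with $\Vert\Pcal'_n\Vert\to\infty$ but $r(\Pcal'_n)$ bounded in $\kgot^*$; the convex moment polytopes $r(\Pcal'_n)$ are then uniformly bounded while the $\Pcal'_n$ escape to infinity. Choosing a common $\Pcal\in\widehat K$ lying in infinitely many of these polytopes and applying a quantitative (interior-of-polytope) branching bound, one obtains $[\pi_\Pcal^K:\pi_{\Pcal'_n}^{K'}\vert_K]\neq 0$ for infinitely many $n$, whence $M(\Pcal)=\infty$, contradicting $K$-admissibility.

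The main obstacle lies precisely in this last step: the containment $\Pcal\subset r(\Pcal')$ is only necessary for non-vanishing of the branching multiplicity, not sufficient. Extracting \emph{infinitely many} genuine non-zero multiplicities from the mere geometric non-properness of $r\vert_S$ requires the sufficient, interior form of the moment polytope theorem (a Guillemin--Sternberg [Q,R]=0 type statement for $K\subset K'$), together with a careful choice of the common $K$-orbit $\Pcal$ so that it lies in the \emph{effective} part of $r(\Pcal'_n)$ for infinitely many $n$. This is the core of Kobayashi's argument in Section~6 of \cite{Toshi-Progress-Math}.
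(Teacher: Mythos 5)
You should note first that the paper does not actually prove Proposition~\ref{prop:toshi-admissible}: it is stated as a citation to Kobayashi (Section~6.3 of \cite{Toshi-Progress-Math}), so there is no ``paper's proof'' to compare against directly. What you have written is therefore a reconstruction of the cited result.

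The ``if'' direction of your argument is sound: the identity $\ker r = \kgot^\perp$, the standard rescaling argument showing that $\As(S)\cap\kgot^\perp=\{0\}$ forces $r\vert_S$ to be proper, and the necessary condition $\Pcal\subset r(\Pcal')$ for nonvanishing of the compact branching coefficient together give finiteness of each $M(\Pcal)$. That part is fine.

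The ``only if'' direction has a genuine gap, and you are honest that it is there. Producing, from a sequence $\Pcal'_n\in S$ escaping to infinity with $r(\Pcal'_n)$ bounded, a \emph{single} $\Pcal\in\widehat K$ with $[\pi_\Pcal^K:\pi_{\Pcal'_n}^{K'}\vert_K]\neq 0$ for infinitely many $n$ is exactly the hard step, and the necessary containment $\Pcal\subset r(\Pcal')$ does not deliver it. One would need a uniform sufficiency statement (a lower bound on branching multiplicities at a fixed interior weight, uniform as $\Pcal'_n\to\infty$ along a direction outside $\kgot^\perp$), and you have not established that such a $\Pcal$ can be chosen once and for all. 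So as written, the proposal does not close.

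It is also worth flagging that Kobayashi's actual proof of this criterion does not proceed through moment-polytope containments at all. The equivalence in \cite{Toshi-Annals98, Toshi-Progress-Math} is proved by microlocal analysis: the hyperfunction character $\Theta_{\pi\vert_{K'}}$ has singular support/wavefront set governed by $\AS_{K'}(\pi)$, and $K$-admissibility is detected by transversality of this wavefront set to the conormal directions of $K\subset K'$, i.e. to $\kgot^\perp$. Your approach, if it could be completed, would give a more elementary and constructive route (a genuine $[Q,R]=0$-style argument for the compact pair $K\subset K'$), which is attractive; but the missing uniform non-vanishing estimate is precisely the content that the microlocal argument sidesteps. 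As it stands, you have a correct half and a clearly identified but unfilled hole in the other half.
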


\medskip

We will use also the following result proved by Barbasch and Vogan  (see Propositions 3.5 and 3.6 in \cite{Barbasch-Vogan}).
\begin{prop}\label{prop:barbasch-vogan}
Let $\pi_{\Ocal'}^{G'}$ be a representation of the discrete series of $G'$ attached to the regular admissible elliptic orbit $\Ocal'$. 
We have 
$$
\AS_{K'}(\pi_{\Ocal'}^{G'})=\p_{\kgot',\ggot'}\left(\As(\Ocal')\right).
$$
\end{prop}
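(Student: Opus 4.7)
The plan is to relate both sides to the wavefront set $\mathrm{WF}_e(\Theta)$ at the identity of the Harish-Chandra distribution character $\Theta = \Theta_{\pi_{\Ocal'}^{G'}}$ of the discrete series representation. I would first invoke Rossmann's character formula: in a neighborhood of $0 \in \ggot'$, one has $\jdemi(X)\,\Theta(\exp X) = \widehat{\beta}_{\Ocal'}(X)$, where $\widehat{\beta}_{\Ocal'}$ is the Fourier transform of the Liouville measure on $\Ocal'$. Since $\jdemi$ is real-analytic and non-vanishing near $0$, this identifies $\mathrm{WF}_e(\Theta) \subset (\ggot')^*$ with the singular support at $0$ of $\widehat{\beta}_{\Ocal'}$, which by Paley-Wiener reasoning is precisely the asymptotic cone $\As(\Ocal')$ of the support of $\beta_{\Ocal'}$.

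On the $K'$ side, I would use that the temperedness of $\pi_{\Ocal'}^{G'}$ guarantees $K'$-admissibility of the restriction, so $\Theta|_{K'}$ is a well-defined distribution on $K'$ with spectral expansion $\Theta|_{K'} = \sum_{\Pcal' \in \wK'} [\pi_{\Pcal'}^{K'}:\pi_{\Ocal'}^{G'}]\,\chi_{\Pcal'}$. The classical Kashiwara-Vergne theorem on wavefront sets of virtual characters of a compact group identifies $\mathrm{WF}_e(\Theta|_{K'})$ with the asymptotic cone of the $K'$-types appearing in the decomposition, which is exactly $\AS_{K'}(\pi_{\Ocal'}^{G'})$. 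Pulling $\Theta$ back along the inclusion $i: K' \hookrightarrow G'$ produces $\Theta|_{K'}$, and the microlocal pullback formula gives $\mathrm{WF}_e(\Theta|_{K'}) = \p_{\kgot',\ggot'}(\mathrm{WF}_e(\Theta))$; combining the three identifications yields the desired equality $\AS_{K'}(\pi_{\Ocal'}^{G'}) = \p_{\kgot',\ggot'}(\As(\Ocal'))$.

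The hard part will be justifying the microlocal pullback through $i$, which requires transversality between $i$ and the conormal directions in $\mathrm{WF}_e(\Theta)$ that is not automatic; one must either rely on the tempered nature of discrete series characters to ensure the restriction is a priori well-defined, or verify directly from the structure of $\As(\Ocal')$ (generated by $G'$-orbits through elliptic elements of $(\tgot')^*$) that it meets the conormal bundle to $K'$ at $e$ only at covectors whose projection to $\kgot^*$ vanishes. An alternative strategy avoiding these microlocal technicalities would combine the Hochs-Song-Wu identity $[\pi_{\Pcal'}^{K'}:\pi_{\Ocal'}^{G'}] = \QS(\Ocal' \slash\!\!\slash \Pcal')$, which yields $\AS_{K'}(\pi_{\Ocal'}^{G'}) \subset \p_{\kgot',\ggot'}(\As(\Ocal'))$ via the easy direction of $[Q,R]=0$, with a non-vanishing argument for the $\spinc$-index on reductions by regular admissible $\Pcal'$ in the interior of the Kirwan polytope $\Delta_{K'}(\Ocal')$, producing the reverse inclusion.
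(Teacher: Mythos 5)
The paper does not actually prove this statement; it cites Barbasch--Vogan (Propositions 3.5 and 3.6 of \cite{Barbasch-Vogan}) and uses the result as a black box. Your microlocal reconstruction --- Rossmann's character formula near the identity to identify $\mathrm{WF}_e(\Theta)$ with $\As(\Ocal')$, the identification of the wavefront set of a $K'$-admissible character with its asymptotic $K'$-support, and the pullback along $i : K' \croc G'$ --- is indeed the circle of ideas that reference develops, so the strategy is the right one. But as written the third step has a genuine gap: H\"ormander's pullback theorem, even when the non-characteristic (transversality) hypothesis is verified, gives only the inclusion $\mathrm{WF}_e(i^*\Theta) \subseteq \p_{\kgot',\ggot'}\left(\mathrm{WF}_e(\Theta)\right)$, never an equality. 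Hence, granting the two endpoint identifications, your chain establishes only $\AS_{K'}(\pi^{G'}_{\Ocal'}) \subseteq \p_{\kgot',\ggot'}\left(\As(\Ocal')\right)$. The reverse inclusion is not optional here: the paper needs the full equality, together with Proposition~\ref{prop:toshi-admissible}, to obtain the equivalence of $K$-admissibility with condition (c) of Theorem~\ref{theo:equivalence}.

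You do acknowledge the difficulty and propose an alternative route for the reverse inclusion, via the Hochs--Song--Wu identity $[\pi^{K'}_{\Pcal'}:\pi^{G'}_{\Ocal'}] = \QS(\Ocal'\slash\!\!\slash\Pcal')$, but the required non-vanishing of $\QS(\Ocal'\slash\!\!\slash\Pcal')$ for regular admissible $\Pcal'$ deep in the interior of the Kirwan polytope is itself a delicate statement (a $\spinc$-index of a reduced space can perfectly well vanish), not an easy corollary of $[Q,R]=0$, and you do not indicate how to prove it. To repair the proposal you would need either a direct argument producing, for each ray of $\p_{\kgot',\ggot'}(\As(\Ocal'))$, a sequence of $K'$-types occurring in $\pi^{G'}_{\Ocal'}$ and escaping along that ray (which is essentially what Barbasch--Vogan do by analysing the leading term of the character expansion), or a proof of the non-vanishing claim for the reduced $\spinc$-indices; neither is supplied.
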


Propositions \ref{prop:toshi-admissible} and \ref{prop:barbasch-vogan} give the equivalence 
$(a)\Longleftrightarrow (c)$. The proof of Theorem \ref{theo:equivalence} is completed. $\Box$

\medskip

In fact Barbasch and Vogan  proved also in \cite{Barbasch-Vogan} that the set $\As(\Ocal')$ does not depends on $\Ocal'$ but only 
on the chamber $\Ccal'$ such that $\Ocal'\in \widehat{G'_d}(\Ccal')$. We obtain the following corollary.

\begin{coro}
The $G$-admissibility of a discrete series representation $\pi^{G'}_{\Ocal'}$ does not depends on $\Ocal'$ but only 
on the chamber $\Ccal'$ such that $\Ocal'\in \widehat{G'_d}(\Ccal')$.
\end{coro}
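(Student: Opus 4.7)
The plan is to chain together the two main results that have just been established: Theorem~\ref{theo:equivalence}, which reduces $G$-admissibility to a purely geometric condition on the asymptotic support $\As(\Ocal')$, and the Barbasch--Vogan result recalled just before the corollary, which states that $\As(\Ocal')$ depends only on the chamber $\Ccal'$ containing $\Ocal'$. Once both are in hand, the corollary follows by pure substitution, with essentially no additional work.

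More precisely, I would proceed as follows. First, fix a chamber $\Ccal'\subset (\tgot')^*_+$ and let $\Ocal',\Ocal''\in\widehat{G'_d}(\Ccal')$ be two regular admissible elliptic orbits sitting over $\Ccal'$. By the Barbasch--Vogan result invoked above, $\As(\Ocal')=\As(\Ocal'')$, since this asymptotic set depends only on $\Ccal'$. Applying the projection $\p_{\kgot',\ggot'}:(\ggot')^*\to(\kgot')^*$ and intersecting with $\kgot^\perp$, we get
\begin{equation*}
\p_{\kgot',\ggot'}\bigl(\As(\Ocal')\bigr)\cap\kgot^\perp
=\p_{\kgot',\ggot'}\bigl(\As(\Ocal'')\bigr)\cap\kgot^\perp.
\end{equation*}
In particular, one of these intersections equals $\{0\}$ if and only if the other does.

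Second, invoke the equivalence $(1)\Longleftrightarrow(3)$ of Theorem~\ref{theo:equivalence}: $\pi^{G'}_{\Ocal'}$ is $G$-admissible if and only if $\p_{\kgot',\ggot'}(\As(\Ocal'))\cap\kgot^\perp=\{0\}$, and likewise for $\Ocal''$. Combining with the previous step, $\pi^{G'}_{\Ocal'}$ is $G$-admissible if and only if $\pi^{G'}_{\Ocal''}$ is. Since $\Ocal',\Ocal''\in\widehat{G'_d}(\Ccal')$ were arbitrary, the $G$-admissibility property is constant on $\widehat{G'_d}(\Ccal')$, which is exactly the statement of the corollary.

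There is no real obstacle in this argument: the two ingredients do all the work, and the proof is essentially a two-line deduction. The only point worth double-checking is that the Barbasch--Vogan statement is applied in the form actually needed here (asymptotic support of the coadjoint orbit $\Ocal'$, viewed as a subset of $(\ggot')^*$, depending only on the chamber), but this is precisely the formulation that the paper has just recalled via Proposition~\ref{prop:barbasch-vogan} together with the remark immediately following it.
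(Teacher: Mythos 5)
Your proof is correct and takes precisely the route the paper intends: combine the equivalence $(1)\Longleftrightarrow(3)$ of Theorem~\ref{theo:equivalence} with the Barbasch--Vogan fact that $\As(\Ocal')$ depends only on the chamber, and the conclusion follows by substitution. The paper leaves this two-step deduction implicit after stating the Barbasch--Vogan result, and your write-up simply spells it out.
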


\section{$\Spinc$ quantization of compact Hamiltonian manifolds}

\subsection{$\Spinc$ structures}
Let $N$ be an even dimensional Riemannian manifold, and let $\mathrm{Cl}(N)$ be its Clifford algebra bundle. 
A complex vector bundle $\Ecal\to N$ is a $\mathrm{Cl}(N)$-module if there is a bundle algebra morphism
$\clif_\Ecal : \mathrm{Cl}(N)\longrightarrow \End(\Ecal)$.

\begin{defi}
Let $\Scal\to M$ be  a $\mathrm{Cl}(N)$-module  such that the  map  $\clif_\Scal$ induces an isomorphism
$\mathrm{Cl}(N)\otimes_\Rbb \Cbb\longrightarrow \End(\Scal)$. Then we say that  
$\Scal$ is a $\spinc$-bundle for $N$.
\end{defi}

\begin{defi}\label{def:determinantLine}
The determinant line bundle of a $\spinc$-bundle $\Scal$ on $N$ is the line bundle $\det(\Scal)\to M$ defined by the relation
$$
\det(\Scal):=\hom_{\mathrm{Cl}(N)}(\overline{\mathcal{S}},\mathcal{S})
$$
where $\overline{\mathcal{S}}$ is the $\mathrm{Cl}(N)$-module with opposite complex structure. 
\end{defi}

Basic examples of $\spinc$-bundles are those coming from manifolds $N$ equipped with an almost complex structure $J$. 
We consider the tangent bundle $\T N$ as a complex vector bundle and we define
$$
\Scal_J:=\bigwedge_\Cbb \T N.
$$
It is not difficult to see that $\Scal_J$ is a $\spinc$-bundle on $N$ with determinant line bundle 
$\det(\Scal_J)=\bigwedge_\Cbb^{max} \T N$. If $L$ is a complex line bundle on $N$, then $\Scal_J\otimes L$ 
is another $\spinc$-bundle with 
determinant line bundle equal to $\bigwedge_\Cbb^{max} \T N\otimes L^{\otimes 2}$.

\subsection{$\Spinc$-prequantization}

In this section $G$ is a semi-simple connected real Lie group.

Let $M$ be an Hamiltonian $G$-manifold with symplectic form $\Omega$ 
and moment map $\Phi_G: M\to \ggot^{*}$ characterized by the relation 
\begin{equation}\label{eq:hamiltonian-action}
    \iota(X_M)\omega= -d\langle\Phi_G,X\rangle,\quad X\in\ggot, 
\end{equation}
where $X_M(m):=\frac{d}{dt}\vert_{t=0} e^{-tX}\cdot m$ is the vector field on $M$ generated by $X\in \ggot$.

In the Kostant-Souriau framework \cite{Kostant70,Souriau}, a $G$-equivariant Hermitian line bundle $L_\Omega$ with an 
invariant Hermitian connection $\nabla$ is a prequantum line bundle over $(M,\Omega,\Phi_G)$ if 
\begin{equation}\label{eq:kostant-L}
    \Lcal(X)-\nabla_{X_M}=i\langle\Phi_G,X\rangle\quad \mathrm{and} \quad
    \nabla^2= -i\Omega,
\end{equation}
for every $X\in\ggot$. Here $\Lcal(X)$ is the infinitesimal action of $X\in\kgot$ on the sections 
of $L_\Omega\to M$. The data $(L_\Omega,\nabla)$ is also called a Kostant-Souriau line bundle. 


\begin{defi}[\cite{pep-JSG}]\label{prop:spin-prequantized}
A $G$-Hamiltonian manifold $(M,\Omega,\Phi_G)$ is \break $\spinc$ prequantized if there exists an equivariant 
$\spinc$ bundle $\Scal$ such that its determinant line bundle $\det(\Scal)$ is a prequantum line bundle over \break
$(M,2\Omega,2\Phi_G)$.
\end{defi}

\medskip

Consider the case of a regular elliptic coadjoint orbit $\Ocal=G\lambda$: here $\lambda\in\tgot^*$ has a stabilizer subgroup equal to $T$. The tangent space $\T_\lambda\Ocal\simeq \ggot/\tgot$ is an even dimensional Euclidean space, 
equipped with a linear action of $T$ and an $T$-invariant antisymmetric endomorphism\footnote{Here we see $\lambda$ has an element of $\tgot$, through the identification $\ggot^*\simeq \ggot$.} ${\rm ad}(\lambda)$. 
Let $J_\lambda:= {\rm ad}(\lambda) (-{\rm ad}(\lambda)^2)^{-1/2}$ be the corresponding $T$-invariant 
complex structure on $\ggot/\tgot$: we denote $V$ the corresponding $T$-module. It defines an integrable $G$-invariant complex structure on $\Ocal\simeq G/T$.

As we have explained in the previous section, the complex structure on $\Ocal$ defines the $\spinc$-bundle
$\Scal_o:=\bigwedge_\Cbb \T \Ocal$ with determinant line bundle
$$
\det(\Scal_o)={\bigwedge}_\Cbb^{\max} \T \Ocal\simeq G\times_T \bigwedge_\Cbb^{\max} V.
$$
A small computation gives that the differential of the $T$-character 
$\bigwedge_\Cbb^{\max} V$ is equal to $i$ times $2\rho(\lambda)$. In other words, 
$\bigwedge_\Cbb^{\max} V=\Cbb_{2\rho(\lambda)}$.

In the next Lemma we see that for the regular elliptic orbits, the notion of {\em admissible} orbits is equivalent to 
the notion of $\spinc$-{\em prequantized} orbits.

\begin{lem}\label{lem:admissible=prequantized}
Let $\Ocal=G\lambda$ be a regular elliptic coadjoint orbit. Then $\Ocal$ is $\spinc$-prequantized if and only if $\lambda-\rho(\lambda)\in \Lambda$.
\end{lem}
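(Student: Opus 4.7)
The plan is to classify the $G$-equivariant $\spinc$-bundles on $\Ocal \simeq G/T$ and single out those whose determinant line bundle satisfies the Kostant--Souriau conditions for $(2\Omega,2\Phi_G)$. Since the canonical $\spinc$-bundle $\Scal_o = \bigwedge_\Cbb \T\Ocal$ associated with $J_\lambda$ is already $G$-equivariant and the set of $\spinc$-bundles on any manifold is a torsor over the group of complex line bundles, the $G$-equivariant $\spinc$-bundles on $\Ocal$ are precisely those of the form
$$\Scal_\mu := \Scal_o \otimes L_\mu, \qquad L_\mu := G \times_T \Cbb_\mu, \ \mu \in \Lambda.$$
Using the identity $\det(\Scal_o \otimes L) = \det(\Scal_o) \otimes L^{\otimes 2}$ together with the computation $\det(\Scal_o) \simeq G \times_T \Cbb_{2\rho(\lambda)}$ recalled just above the lemma, this gives
$$\det(\Scal_\mu) \simeq G \times_T \Cbb_{2\rho(\lambda) + 2\mu}.$$

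The next step is the classical Kostant--Souriau calculation for homogeneous line bundles: for any $\nu \in \Lambda$, the $G$-equivariant Hermitian line bundle $G \times_T \Cbb_\nu$, endowed with its natural invariant connection, is a prequantum line bundle over the coadjoint orbit $G\nu \simeq G/T$ for its KKS symplectic form and the tautological moment map $gT \mapsto g\nu$. Transporting this statement to $\Ocal = G\lambda$ via $gT \mapsto g\lambda$: the form $2\Omega$ is the KKS form for the orbit $G(2\lambda)$ and the moment map $2\Phi_G$ sends $gT$ to $g(2\lambda)$. Consequently $\det(\Scal_\mu)$ is a prequantum line bundle for $(2\Omega, 2\Phi_G)$ if and only if $2\rho(\lambda) + 2\mu = 2\lambda$, i.e. $\mu = \lambda - \rho(\lambda)$. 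An equivariant $\spinc$-prequantization of $\Ocal$ therefore exists if and only if this unique candidate $\mu$ lies in $\Lambda$, which is exactly the condition $\lambda - \rho(\lambda) \in \Lambda$.

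The step demanding the most care is the verification that the infinitesimal Kostant--Souriau identity pins down the character to be $\nu = 2\lambda$: at the base point $\lambda \in \Ocal$, the vector field $X_\Ocal$ vanishes for $X \in \tgot$ and $T$ acts on the fiber $\Cbb_\nu$ by the character whose differential is $i\langle \nu, \cdot\rangle$, so the condition $\Lcal(X) - \nabla_{X_\Ocal} = i\langle 2\Phi_G, X\rangle$ evaluated there forces $\nu = 2\lambda$. The curvature identity $\nabla^2 = -2i\Omega$ then propagates from this pointwise equality by $G$-equivariance, and the sign and normalisation conventions for $\rho(\lambda)$ and for the KKS form on $\Ocal$ match up without further adjustment.
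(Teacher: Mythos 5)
Your proof is correct and follows essentially the same route as the paper: classify the $G$-equivariant $\spinc$-bundles on $G/T$ as twists $\Scal_o\otimes L_\mu$ of the canonical one, compute the determinant line bundle as $G\times_T\Cbb_{2\rho(\lambda)+2\mu}$, and use uniqueness of the Kostant--Souriau line bundle for $(2\Omega,2\Phi_G)$ to force $\mu=\lambda-\rho(\lambda)$. The only difference is that you spell out the infinitesimal KKS computation pinning the character to $2\lambda$, which the paper leaves implicit under ``$G$-invariance.''
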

\begin{proof} Any $G$-equivariant $\spinc$-bundle on $\Ocal$ is of the form 
$\Scal_\phi=\Scal_o\otimes L_\phi$ where $L_\phi=G\times_T\Cbb_\phi$ is a line bundle associated 
to a character $e^X\mapsto e^{i\langle\phi,X\rangle}$ of the group $T$. Then we have 
$$
\det(\Scal_\phi)=\det(\Scal_o)\otimes L_\phi^{\otimes 2}= G\times_T\Cbb_{2\phi+2\rho(\lambda)}.
$$

By $G$-invariance we know that the only Kostant-Souriau line bundle on $(G\lambda,2\Omega_{G\lambda})$ is the line bundle
$G\times_T \Cbb_{2\lambda}$. Finally we see that $G\lambda$ is $\spinc$-prequantized by $\Scal_\phi$ if and only if 
$\phi=\lambda-\rho(\lambda)$. $\Box$
\end{proof}

\medskip

If $\Ocal$ is a regular admissible elliptic coadjoint orbit, we denote $\Scal_\Ocal:=\Scal_o\otimes L_{\lambda-\rho(\lambda)}$ 
the corresponding $\spinc$ bundle. Here we use the grading  
$\Scal_\Ocal=\Scal^+_\Ocal\oplus\Scal_\Ocal^-$ induced by the symplectic orientation.

\subsection{$\Spinc$ quantization of compact manifolds}

Let us consider a compact Hamiltonian $K$-manifold $(M,\Omega,\Phi_K)$ which is $\spinc$-prequantized by a
$\spinc$-bundle $\Scal$. The (symplectic) orientation induces a decomposition $\Scal=\Scal^+\oplus \Scal^-$, and 
the corresponding $\spinc$ Dirac operator is a first order elliptic operator \cite{B-G-V} 
$$\Dcal_\Scal: \Gamma(M,\Scal^+)\to \Gamma(M,\Scal^-).
$$  
Its principal symbol is the bundle map  $\sigma(M,\Scal)\in \Gamma(\T^*M,\hom(p^*\Scal^+,p^*\Scal^-))$ defined by the relation
$$
\sigma(M,\Scal)(m,\nu)=\clif_{\Scal\vert_m}(\tilde{\nu}): \Scal\vert_m^+\longrightarrow\Scal\vert_m^-.
$$
Here $\nu\in\T^* M\mapsto \tilde{\nu}\in \T M$ is the identification defined by an invariant Riemannian structure.

\begin{defi} \label{defi:spinc-quantization}
The $\spinc$ quantization of a compact Hamiltonian $K$-manifold $(M,\Omega,\Phi_K)$ is the equivariant index of 
the elliptic operator  $\Dcal_\Scal$ and is denoted
$$
\QS_K(M)\in R(K).
$$
\end{defi}


\subsection{Quantization commutes with reduction}

Now we will explain how the multiplicities of $\QS_K(M)\in R(K)$ can be computed geometrically. 

Recall that the dual $\what{K}$ is parametrized by the regular admissible coadjoint orbits. They are those of the form
$\Pcal=K\mu$ where $\mu-\rho_c\in \Lambda\cap\tgot^*_+$. After Lemma \ref{lem:admissible=prequantized}, we know 
that any regular admissible coadjoint orbit $\Pcal\in\wK$ is $\spinc$-prequantized by a $\spinc$ bundle $\Scal_{\Pcal}$ 
and a small computation shows that $\QS_K(\Pcal)=\pi_\Pcal^K$ (see \cite{pep-vergne:magic}). 

For any $\Pcal\in\what{K}$, we define the symplectic reduced space
$$
M\slash\!\!\slash \Pcal:=\Phi_K^{-1}(\Pcal)/K.
$$
If $M\slash\!\!\slash \Pcal\neq \emptyset$, then any $m\in \Phi_K^{-1}(\Pcal)$ has abelian infinitesimal stabilizer. It implies then that the generic infinitesimal stabilizer for the $K$-action on $M$ is {\em abelian}.

Let us explain how we can quantize these symplectic reduced spaces (for more details see \cite{pep-ENS, pep-JSG,pep-vergne-acta}).

\begin{prop}\label{prop:QS-definition}
Suppose that the generic infinitesimal stabilizer for the $K$-action on $M$ is abelian.

$\bullet$ If $\Pcal\in\what{K}$ belongs to the set of regular values of $\Phi_K:M\to \kgot^*$, then \break 
$M\slash\!\!\slash \Pcal$ is a compact symplectic orbifold which is $\spinc$-prequantized. 
We denote $\QS(M\slash\!\!\slash \Pcal)\in \Zbb$ the index 
of the corresponding $\spinc$ Dirac operator \cite{Kawasaki81}.

$\bullet$ In general, if $\Pcal=K\lambda$ with $\lambda\in \tgot^*$, we consider the orbits $\Pcal_\epsilon=K(\lambda+\epsilon)$ 
for generic small elements $\epsilon\in\tgot^*$ so that $M\slash\!\!\slash \Pcal_\epsilon$ is a compact symplectic orbifold with a 
peculiar $\spinc$-structure. Let $\QS(M\slash\!\!\slash \Pcal_\epsilon)\in \Zbb$ be the index of the corresponding $\spinc$ Dirac operator. 
The crucial fact is that the quantity $\QS(M\slash\!\!\slash \Pcal_\epsilon)$ does not depends on the 
choice of generic and small enough $\epsilon$. Then we take
$$
\QS(M\slash\!\!\slash \Pcal):=\QS(M\slash\!\!\slash \Pcal_\epsilon)
$$
 for generic and small enough $\epsilon$.
\end{prop}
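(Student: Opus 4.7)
The plan is to establish the two parts by leveraging the abelian generic stabilizer assumption, classical symplectic reduction, and a wall-crossing argument for the independence statement.

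For the first bullet, I would argue as follows. Since the generic infinitesimal stabilizer of the $K$-action on $M$ is abelian, at every $m\in\Phi_K^{-1}(\Pcal)$ with $\Pcal$ a regular value of $\Phi_K$, the stabilizer of $m$ is a finite extension of a subtorus of $K_\lambda=T$, hence finite once we quotient by $K_\lambda$. Thus $K$ acts with finite stabilizers on $\Phi_K^{-1}(\Pcal)$ and the quotient $M\slash\!\!\slash\Pcal$ is a compact symplectic orbifold in the sense of Marsden--Weinstein. To build the $\spinc$-prequantization, I would restrict the equivariant $\spinc$-bundle $\Scal$ from $M$ to $\Phi_K^{-1}(\Pcal)$ and tensor with $\Scal_\Pcal^*$ pulled back along the $K$-equivariant map $\Phi_K^{-1}(\Pcal)\to \Pcal$; after taking $K$-invariants (using the Clifford action along the normal/orbit directions to split off $\Scal_\Pcal$), one obtains a $\spinc$-bundle on $M\slash\!\!\slash\Pcal$ whose determinant line bundle descends from $\det(\Scal)\otimes\det(\Scal_\Pcal)^{-2}$ and, by Definition \ref{prop:spin-prequantized} applied to both $M$ and $\Pcal$, is a prequantum line bundle for $2\Omega_{\mathrm{red}}$. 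The integer $\QS(M\slash\!\!\slash\Pcal)$ is then the orbifold $\spinc$ Dirac index of Kawasaki.

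For the second bullet, when $\Pcal=K\lambda$ is not a regular value, I would first observe that for generic small $\epsilon\in\tgot^*$ the shifted orbit $\Pcal_\epsilon=K(\lambda+\epsilon)$ is regular; however, the $\spinc$-twist used to form the reduction must be that of $\Pcal$, not of $\Pcal_\epsilon$ — this is the ``peculiar'' $\spinc$-structure referred to in the statement. Concretely, pull back $\Scal\otimes\Scal_\Pcal^*$ to $\Phi_K^{-1}(\Pcal_\epsilon)$ and take $K$-invariants as above; the result is a $\spinc$-orbifold bundle on $M\slash\!\!\slash\Pcal_\epsilon$, which defines $\QS(M\slash\!\!\slash\Pcal_\epsilon)\in\Zbb$.

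The heart of the proof, and the main technical obstacle, is the independence of $\QS(M\slash\!\!\slash\Pcal_\epsilon)$ on the generic small $\epsilon$. I would join two generic small values $\epsilon_0,\epsilon_1$ by a path $\epsilon_t$ in $\tgot^*$ and invoke the cobordism/wall-crossing framework developed in \cite{pep-ENS,pep-JSG,pep-vergne-acta}: across a regular segment of the path, $M\slash\!\!\slash\Pcal_{\epsilon_t}$ varies in a family of $\spinc$-orbifolds whose index is locally constant; at each wall, the two nearby reduced spaces are cobordant as $\spinc$-orbifolds (using that the $\spinc$-twist is fixed, coming from $\Pcal$, rather than sliding with $\epsilon$), so the index is preserved. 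The abelian generic stabilizer hypothesis is exactly what is needed to ensure that singular strata have even codimension and that the local models near walls behave as in the Hamiltonian torus case, where wall-crossing for the shifted $\spinc$-index is known to vanish. Once independence is established, the definition $\QS(M\slash\!\!\slash\Pcal):=\QS(M\slash\!\!\slash\Pcal_\epsilon)$ is unambiguous and compatible with the regular case, completing the proof.
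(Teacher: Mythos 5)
The paper does not prove this proposition in the text --- it defers entirely to \cite{pep-ENS,pep-JSG,pep-vergne-acta}. Your construction in the first bullet (finite stabilizers from the abelian generic stabilizer hypothesis, descent of the $\spinc$-bundle by tensoring $\Scal$ with $\Scal_\Pcal^*$ and taking $K$-invariants, matching determinant lines) is essentially the one used in those references, and you also correctly read the ``peculiar'' $\spinc$-structure on $M\slash\!\!\slash\Pcal_\epsilon$ as the one whose twist is frozen at $\Pcal$ rather than sliding with $\Pcal_\epsilon$.

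The gap is in the independence statement. Your claim that the reduced spaces on the two sides of a wall are cobordant as $\spinc$-orbifolds is not justified and is generally false: wall-crossing can change the topology of the reduced space in ways that are not realized by any $\spinc$-orbifold cobordism. Moreover, your appeal to the fact that ``wall-crossing for the shifted $\spinc$-index is known to vanish'' is exactly the assertion to be proved, so as written the argument is circular at that step. The route in the cited references, and implicit in equation~(\ref{eq:shift}) of the present paper, is to produce an intrinsically $\epsilon$-free definition via the shifting trick and Witten (non-abelian) localization: one sets $Z_0=\{(m,\xi)\in M\times\Pcal^-:\ \Phi_K(m)=\xi\}$ and defines $\QS(M\slash\!\!\slash\Pcal):=\left[\QS_K(M\times\Pcal^-,Z_0)\right]^K$, a localized transversally elliptic index that does not involve $\epsilon$ at all; then one proves that for generic small $\epsilon$ this localized index coincides with the Kawasaki orbifold index $\QS(M\slash\!\!\slash\Pcal_\epsilon)$. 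Independence in $\epsilon$ is then an automatic corollary of this identification, not the output of a direct wall-crossing computation. As a side remark, the abelian generic stabilizer hypothesis is used to guarantee the orbifold structure and the descent of the $\spinc$-bundle on $M\slash\!\!\slash\Pcal$, not to control codimensions of singular strata --- that is a general fact about symplectic quotients and does not need the hypothesis.
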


The following theorem is proved in \cite{pep-ENS}.

\begin{theo}\label{theo:QR-spinc-compact}
Let $(M,\Omega,\Phi_K)$ be  a $\spinc$-prequantized compact Hamiltonian $K$-manifold. Suppose that the generic infinitesimal stabilizer for the $K$-action on $M$ is {\em abelian}. Then the following relation holds in $R(K)$:
\begin{equation}\label{eq:QR-spinc-compact}
\QS_K(M)=\sum_{\Pcal\in\what{K}}\QS(M\slash\!\!\slash \Pcal)\, \pi^K_\Pcal.
\end{equation}
\end{theo}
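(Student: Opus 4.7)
The plan is to compute the multiplicity of each $\pi^K_\Pcal$ in $\QS_K(M)$ using a deformation/localization argument on the symbol of the Dirac operator, following the transversally elliptic symbol technique. The main tool is the \emph{Kirwan vector field} $\kappa_\Phi$ on $M$ defined by $\kappa_\Phi(m) := (\Phi_K(m))_M(m)$, where $\kgot^*$ is identified with $\kgot$ via an invariant inner product. Its zero set $\mathrm{Cr}(\|\Phi_K\|^2)$ decomposes as a disjoint union of $K$-invariant compact pieces $Z_\beta \subset \Phi_K^{-1}(K\beta)$ indexed by elements $\beta \in \tgot^*_+$, and deforming the principal symbol $\sigma(M,\Scal)(m,\nu)$ to $\sigma(M,\Scal)(m,\nu - \widetilde{\kappa_\Phi(m)})$ produces a $K$-transversally elliptic symbol whose equivariant index still equals $\QS_K(M)$.

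By the excision property of the index of transversally elliptic symbols, $\QS_K(M)$ splits as a locally finite sum $\sum_\beta \QS_K^\beta(M)$, where $\QS_K^\beta(M) \in \widehat{R}(K)$ is supported near $Z_\beta$. After applying the shifting trick (that is, replacing $M$ by $M \times \overline{\Pcal}$ with its product $\spinc$-structure and looking at the trivial isotypic component), extracting the multiplicity $[\QS_K(M):\pi^K_\Pcal]$ reduces to computing the $K$-invariant part of each $\QS_K^\beta(M\times\overline{\Pcal})$. I would then carry out two main computations: first, identify the contribution from $\beta=0$, which localizes near $\Phi_K^{-1}(\Pcal)$, with the $\spinc$ index $\QS(M\slash\!\!\slash \Pcal)$ on the symplectic reduced orbifold, using the local normal form of a Hamiltonian $K$-manifold near a regular level set together with Kawasaki's index theorem for orbifolds; the assumption that the generic infinitesimal stabilizer is abelian is exactly what makes $M\slash\!\!\slash \Pcal$ an orbifold and makes the local normal form available. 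When $\Pcal$ is not a regular value one uses the shift-desingularization by $\epsilon$ as in Proposition \ref{prop:QS-definition}, showing the $\epsilon$-independence follows from the same transversally elliptic index computation.

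The harder step, and the main obstacle, is the \emph{vanishing} of the $K$-invariant part of $\QS_K^\beta(M\times\overline{\Pcal})$ for every $\beta\neq 0$. This is established by restricting to a tubular neighborhood of $Z_\beta$, decomposing the normal bundle into weight subbundles for the infinitesimal stabilizer generated by $\beta$, and expressing the associated contribution as an infinite but locally finite sum of $K$-characters. One then shows, via a careful analysis of the signs/weights appearing after the Clifford-module polarization determined by $\beta$, that no $K$-invariant character survives: the key inequality is that along $Z_\beta$ one has $\langle\mu - \rho_c,\beta\rangle > 0$ for every weight $\mu$ contributing, due to the relation $\Phi_K = \beta$ on $Z_\beta$ and the $\spinc$-prequantization condition (Definition \ref{prop:spin-prequantized}). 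This positivity estimate, which propagates Kirwan-type convexity to the $\spinc$ setting, is the technical heart of the argument and is where the $\spinc$-prequantization hypothesis is genuinely used.

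Combining the vanishing for $\beta\neq 0$ with the identification of the $\beta=0$ contribution yields $[\QS_K(M):\pi^K_\Pcal] = \QS(M\slash\!\!\slash \Pcal)$ for every $\Pcal\in\widehat{K}$, which is exactly the claimed equality (\ref{eq:QR-spinc-compact}).
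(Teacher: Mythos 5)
The paper does not supply a proof of Theorem~\ref{theo:QR-spinc-compact}; it cites \cite{pep-ENS} and moves on. Your sketch does reconstruct the broad strategy used there: deform the Dirac symbol by the Kirwan vector field, split the equivariant index over the components $Z_\beta$ of $\mathrm{Cr}(\|\Phi_K\|^2)$, apply the shifting trick, identify the $\beta=0$ piece with $\QS(M\slash\!\!\slash\Pcal)$ via the local normal form and Kawasaki's theorem, and prove the vanishing of the invariant part of the $\beta\neq 0$ contributions. That is indeed the skeleton of Paradan's argument.

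Two points in your sketch need correction. First, the abelian generic infinitesimal stabilizer hypothesis is not what makes $M\slash\!\!\slash\Pcal$ an orbifold --- that is already guaranteed when $\Pcal$ is a regular value of $\Phi_K$ (finite stabilizers along the level set). The abelian hypothesis is instead what rules out the extra correction terms that appear in the general $\spinc$ [Q,R]=0 formula (cf.\ the remark following the theorem and the more general results of \cite{pep-JSG,pep-vergne-acta}); with a nonabelian generic stabilizer the simple equality~(\ref{eq:QR-spinc-compact}) can fail. Second, the positivity estimate you quote for the vanishing of the $\beta\neq 0$ contributions, namely $\langle\mu-\rho_c,\beta\rangle>0$ for every contributing weight $\mu$, is not the right formulation. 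In the $\spinc$ setting the mechanism is that after Clifford polarization along $\beta$ of the normal bundle of $Z_\beta$ in $M\times\overline{\Pcal}$, the localized index $\QS_K^\beta$ is a locally finite sum of induced characters whose $T_\beta$-weights are bounded below by the moment map pairing, and the prequantization condition on $\det(\Scal)$ (it prequantizes $(M,2\Omega,2\Phi_K)$) forces $\langle 2\Phi_K,\beta\rangle=2\|\beta\|^2>0$ on $Z_\beta$; it is this strict positivity of the moment map pairing, not an inequality in $\mu-\rho_c$, that kills the $K$-invariant part. As stated, your inequality would have to be checked at the trivial weight $\mu=0$, where it fails to have the sign you need, so the vanishing argument as written does not go through. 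Fixing this requires tracking the $T_\beta$-weight of the one-dimensional piece $[\det\Scal]^{1/2}\boxtimes[\det\Scal_{\Pcal^-}]^{1/2}$ along $Z_\beta$, which is exactly where the $\spinc$-prequantization (Definition~\ref{prop:spin-prequantized}) enters.
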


\begin{rem}
Identity \ref{eq:QR-spinc-compact} admits generalisations when we do not have conditions on the generic stabilizer \cite{pep-JSG} and also when we allow the $2$-form $\Omega$ to be degenerate \cite{pep-vergne-acta}. In this article, we do not need such generalizations.
\end{rem}

For $\Pcal\in \what{K}$, we denote $\Pcal^-$ the coadjoint orbit with $\Pcal$ with opposite symplectic structure. 
The corresponding $\spinc$ bundle is $\Scal_{\Pcal^-}$. It is not difficult to see that $\QS_K(\Pcal^-)=(\pi_\Pcal^K)^*$ 
(see \cite{pep-vergne:magic}). 
The shifting trick tell us then that the multiplicity of $\pi^K_\Pcal$ in $\QS_K(M)$ is equal to 
$[\QS_K(M\times \Pcal^-)]^K$. If we suppose furthermore that the generic infinitesimal stabilizer is abelian we obtain the useful relation
\begin{equation}\label{eq:shift}
\QS(M\slash\!\!\slash \Pcal):=\left[\QS_K(M\times \Pcal^-)\right]^K.
\end{equation}

Let $\gamma$ that belongs to the center of $K$: it acts trivially on the orbits $\Pcal\in \what{K}$. 
Suppose now that $\gamma$ acts also trivially on the manifolds $M$. We are interested by the action 
of $\gamma$ on the fibers of the $\spinc$-bundle $\Scal\boxtimes \Scal_{\Pcal^-}$. We denote 
$[\Scal\boxtimes \Scal_{\Pcal^-}]^\gamma$ the subbundle where $\gamma$ acts trivially.

\begin{lem}\label{lem:gamma-compact}
If $[\Scal\boxtimes \Scal_{\Pcal^-}]^\gamma=0$ then $\QS(M\slash\!\!\slash \Pcal)=0$.
\end{lem}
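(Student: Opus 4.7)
The strategy is to apply the shifting trick \eqref{eq:shift}, which gives
$$
\QS(M\slash\!\!\slash \Pcal) \;=\; [\QS_K(M\times \Pcal^-)]^K,
$$
and then to show that the right-hand side vanishes by decomposing the $\spinc$ bundle $\Scal\boxtimes\Scal_{\Pcal^-}$ into $\gamma$-isotypic pieces.

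First I would observe that $\gamma$ acts trivially on all of $M\times \Pcal^-$: by hypothesis it acts trivially on $M$, and because $\gamma$ is central in $K$ it fixes every point of the coadjoint orbit $\Pcal$. In particular the action on the tangent bundle is trivial, so the Clifford multiplication $\Clif(\T(M\times \Pcal^-))\to\End(\Scal\boxtimes\Scal_{\Pcal^-})$ is $\gamma$-equivariant, and the decomposition
$$
\Scal\boxtimes\Scal_{\Pcal^-} \;=\; \bigoplus_\chi \Ecal_\chi
$$
into $\gamma$-eigenbundles is a decomposition of Clifford modules. Choosing a $K$-invariant (hence $\gamma$-invariant) Clifford connection, the $\spinc$ Dirac operator preserves each summand, which yields
$$
\QS_K(M\times \Pcal^-) \;=\; \sum_\chi \QS_K(M\times \Pcal^-,\Ecal_\chi) \quad \text{in } R(K),
$$
with $\gamma$ acting on the $\chi$-summand by the scalar $\chi(\gamma)$. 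Any $K$-invariant vector must be $\gamma$-invariant (since $\gamma\in K$), so only the $\chi=1$ piece contributes to $[\QS_K(M\times \Pcal^-)]^K$. But the hypothesis $[\Scal\boxtimes\Scal_{\Pcal^-}]^\gamma = 0$ says precisely that $\Ecal_1 = 0$, so this summand vanishes and the conclusion follows.

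The one step requiring care -- and where I expect most of the bookkeeping -- is the use of \eqref{eq:shift} when $\Pcal$ is not a regular value of $\Phi_K$: in that case $\QS(M\slash\!\!\slash \Pcal)$ is defined via perturbation to $\Pcal_\epsilon$ for generic small $\epsilon$, and the argument must be applied on $M\times \Pcal_\epsilon^-$. Since $\gamma$ is central, it acts on $\Scal_{\Pcal_\epsilon^-}$ by exactly the same character as on $\Scal_{\Pcal^-}$, so the hypothesis transfers verbatim to the perturbed bundle and the vanishing of $[\Scal\boxtimes\Scal_{\Pcal_\epsilon^-}]^\gamma$ forces $\QS(M\slash\!\!\slash\Pcal_\epsilon)=0$ for all such $\epsilon$, giving the result.
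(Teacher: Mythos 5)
Your proof is correct and takes essentially the same approach as the paper: both reduce to $[\QS_K(M\times\Pcal^-)]^K$ via the shifting trick \eqref{eq:shift} and then exploit that $K$-invariance implies $\gamma$-invariance, forcing the surviving sections to land in the vanishing subbundle $[\Scal\boxtimes\Scal_{\Pcal^-}]^\gamma$. The paper phrases this slightly more directly, noting $[\ker D]^K\subset[\ker D]^\gamma\subset\Gamma\bigl([\Scal\boxtimes\Scal_{\Pcal^-}]^\gamma\bigr)$ rather than decomposing the bundle and the index into $\gamma$-eigenpieces, but the two formulations are equivalent; also, the caveat about non-regular $\Pcal$ is unnecessary, since \eqref{eq:shift} already holds (under the abelian-stabilizer hypothesis) with $\QS(M\slash\!\!\slash\Pcal)$ defined by perturbation, so the identity applies directly.
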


\begin{proof} 
Let $D$ be the Dirac operator on $M\times \Pcal^-$ associated to the $\spinc$ bundle 
$\Scal\boxtimes \Scal_{\Pcal^-}$. Then 
$$
\left[\QS_K(M\times \Pcal^-)\right]^K=[{\rm ker }(D)]^K - [{\rm coker}(D)]^K.
$$
Obviously $[{\rm ker }(D)]^K\subset [{\rm ker }(D)]^\gamma$ and $[{\rm ker }(D)]^\gamma$ is contained 
in the set of smooth section of the bundle $[\Scal\boxtimes \Scal_{\Pcal^-}]^\gamma$. The same result holds for 
$[{\rm coker}(D)]^K$. Finally, if $[\Scal\boxtimes \Scal_{\Pcal^-}]^\gamma=0$, then 
$[{\rm ker }(D)]^K$ and  $[{\rm coker}(D)]^K$ are reduced to $0$. $\Box$

\end{proof}

\section{$\Spinc$ quantization of non-compact Hamiltonian manifolds}

In this section our Hamiltonian $K$-manifold $(M,\Omega,\Phi_K)$ is not necessarily compact, but the moment map $\Phi_K$ is 
supposed to be proper. We assume that $(M,\Omega,\Phi_K)$ is $\spinc$-prequantized by a $\spinc$-bundle $\Scal$.

In the next section, we will explain how to quantize the data $(M,\Omega,\Phi_K,\Scal)$.

\subsection{Formal geometric quantization : definition}

We choose an invariant scalar product in $\kgot^*$ that provides an identification $\kgot\simeq\kgot^*$.

\begin{defi}\label{defi:kir}
$\bullet$ The {\em Kirwan vector field} associated to $\Phi_K$ is defined by
\begin{equation}\label{eq-kappa}
    \kappa(m)= -\Phi_K(m)\cdot m, \quad m\in M.
\end{equation}
\end{defi}

We denote by $Z_M$ the set of zeroes of $\kappa$.  It is not difficult to see that $Z_M$ corresponds to the set of 
critical points of the function $\|\Phi_K\|^2: M\to\Rbb$.

The set $Z_M$, which is not necessarily smooth, admits the following description. Choose a Weyl chamber 
$\mathfrak{t}^*_+\subset \mathfrak{t}^*$ in the dual 
of the Lie algebra of a maximal torus $T$ of $K$. We see that 
\begin{equation}\label{eq=Z-Phi-beta}
Z_M=\coprod_{\beta\in\Bcal} Z_\beta
\end{equation}
where $Z_\beta$ corresponds to the compact set $K(M^\beta\cap\Phi_K^{-1}(\beta))$, and $\Bcal=\Phi_K(Z_M)\cap \tgot^*_+$.  
The properness of $\Phi_K$ insures that for any compact subset $C\subset \tgot^*$ the intersection $\Bcal\cap C$ is finite.

The principal symbol of the Dirac operator $D_\Scal$ is the bundle map
$\sigma(M,\Scal)\in \Gamma(\T^* M, \hom(\Scal^+,\Scal^-))$ defined by the Clifford action
$$\sigma(M,\Scal)(m,\nu)=\clif_{m}(\tilde{\nu}): \Scal\vert_m^+\to \Scal\vert_m^-.$$
where $\nu\in \T^* M\simeq \tilde{\nu}\in \T M$ is an identification associated to an invariant Riemannian metric on $M$.
\begin{defi}\label{def:pushed-sigma}
The symbol  $\sigma(M,\Scal,\Phi_K)$ shifted by the vector field $\kappa$ is the
symbol on $M$ defined by
$$
\sigma(M,\Scal,\Phi_K)(m,\nu)=\sigma(M,\Scal)(m,\tilde{\nu}-\kappa(m))
$$
for any $(m,\nu)\in\T^* M$.
\end{defi}

For any $K$-invariant open subset $\Ucal\subset M$ such that $\Ucal\cap Z_M$ is compact in $M$, we see that the restriction
$\sigma(M,\Scal,\Phi_K)\vert_\Ucal$ is a transversally elliptic symbol on $\Ucal$, and so its equivariant index is a well defined element in
$\wR(K)$ (see \cite{Atiyah74,pep-vergne:witten}).

Thus we can define the following localized equivariant indices.

\begin{defi}\label{def:indice-localise}
\begin{itemize}
\item A closed invariant subset $Z\subset Z_M$ is called a {\em component} of $Z_M$ if it is a union of connected components of 
$Z_M$.
\item  If $Z$ is a {\em compact component} of $Z_M$, we denote by
$$
\QS_K(M,Z)\ \in\ \widehat{R}(K)
$$
the equivariant index of $\sigma(M,\Scal,\Phi_K)\vert_\Ucal$ where $\Ucal$ is an invariant neighbourhood of $Z$
so that $\Ucal\cap Z_M=Z$.
\end{itemize}
\end{defi}

By definition, $Z=\emptyset$ is a component of $Z_M$ and $\QS_K(M,\emptyset)=0$. For any $\beta\in\Bcal$, $Z_\beta$ is a 
compact component of  $Z_M$. 

When the manifold $M$ is compact, the set $\Bcal$ is finite and we have the decomposition
$$
\QS_K(M)=\sum_{\beta\in\Bcal}\QS_K(M,Z_\beta) \quad \in\, \wR(K).
$$
See \cite{pep-RR,pep-vergne:witten}. When the manifold $M$ is not compact, but the moment map $\Phi_K$ is proper, we can define
$$
\QSf_K(M):=\sum_{\beta\in\Bcal}\QS_K(M,Z_\beta)\ \in\ \wR(K).
$$
The sum of the right hand side is not necessarily finite but it converges in $\wR(K)$ (see \cite{pep-pacific,Ma-Zhang14,Hochs-Song-duke}). 

\begin{defi}\label{def:formal-quant-spin}
We call $\QSf_K(M)\in \widehat{R}(K)$ the $\spinc$ formal geometric quantization of the Hamiltonian manifold $(M,\Omega,\Phi_K)$.
\end{defi}

We end up this section with the example of the coadjoint orbits that parametrize the discrete series representations. 
We have seen in Lemma \ref{lem:admissible=prequantized} that any $\Ocal\in\wG_d$ is $\spinc$-prequantized. 
Moreover, if we look at the $K$-action on $\Ocal$, we know also that the moment map 
$\Phi_K:\Ocal\to \kgot^*$ is proper. The element $\QSf_K(\Ocal)\in\wR(K)$ is then well-defined.

The following result can be understood as a geometric interpretation of the Blattner formula.

\begin{prop}[\cite{pep-ENS}]\label{prop:Q-K-O}
For any $\Ocal\in\wG_d$ we have the following equality in $\wR(K)$:
$$
\QSf_K(\Ocal)=\pi_\Ocal^G\vert_K.
$$
\end{prop}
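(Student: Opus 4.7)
The plan is to verify the identity $\QSf_K(\Ocal)=\pi_\Ocal^G\vert_K$ in $\wR(K)$ by matching $K$-multiplicities on each side. Write $\Ocal=G\lambda$ with $\lambda\in(\tgot^*_+)_{se}$; by Lemma \ref{lem:admissible=prequantized}, $\Ocal$ is $\spinc$-prequantized by $\Scal_\Ocal=\Scal_o\otimes L_{\lambda-\rho(\lambda)}$, and since $G$ is semisimple with compact Cartan $T$, the moment map $\Phi_K:\Ocal\to\kgot^*$ is proper, so $\QSf_K(\Ocal)\in\wR(K)$ is well-defined.

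For the right-hand side, I would invoke Blattner's formula \cite{Hecht-Schmid}, which expresses $m_\Ocal(\Pcal):=[\pi_\Pcal^K:\pi_\Ocal^G\vert_K]$ as an explicit alternating sum over the Weyl group of $K$ involving the $T$-weights of the symmetric algebra of $\pgot$ (twisted by the spin representation). For the left-hand side, I would use the shifting trick extended to the non-compact setting: the multiplicity of $\pi_\Pcal^K$ in $\QSf_K(\Ocal)$ should equal $\QS(\Ocal\slash\!\!\slash \Pcal)$, defined as in Proposition \ref{prop:QS-definition} via small shifts when $\Pcal$ is not a regular value of $\Phi_K$. To identify this integer, I would exploit the $K$-equivariant diffeomorphism $\Ocal=G/T\simeq K\times_T\pgot$ coming from the Cartan decomposition $G=K\exp(\pgot)$, under which $\Ocal$ becomes a $K$-equivariant vector bundle over the compact orbit $K\lambda$, the $\spinc$-bundle $\Scal_\Ocal$ factors accordingly, and the $\spinc$ Dirac index on $\Ocal\slash\!\!\slash \Pcal$ reduces to a purely $T$-equivariant computation over the fiber $\pgot$ that matches Blattner's formula term-by-term.

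The principal obstacle is the non-compactness of $\Ocal$: one must upgrade Theorem \ref{theo:QR-spinc-compact} and the shifting identity (\ref{eq:shift}) to the formal geometric quantization setting, showing in particular that the component-by-component decomposition $\QSf_K(\Ocal)=\sum_{\beta\in\Bcal}\QS_K(\Ocal,Z_\beta)$ is compatible with reduction at each $\Pcal\in\wK$ and that only finitely many $\beta\in\Bcal$ contribute to a given $K$-isotypic component. This requires careful analysis of the Witten-type deformation driven by the Kirwan vector field $\kappa$ from Definition \ref{defi:kir}, along the lines of \cite{pep-pacific,pep-vergne:witten}. A consistency check that bypasses Blattner can be carried out using Proposition \ref{prop:restriction} and Lemma \ref{lem:K-out}(2): computing $\QSf_K(\Ocal)\otimes\Scal_\pgot^o$ directly via the $\tilde{K}$-equivariant Dirac operator on $K\times_T\pgot$ should recover $\pm\pi^{\tilde K}_{\Ocal_K}$, matching the known value of $\pi_\Ocal^G\vert_K\otimes\Scal_\pgot^o$.
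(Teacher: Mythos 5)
Your proposal matches the strategy of the cited reference \cite{pep-ENS}, which is where the paper defers the proof: there too, the key moves are the $K$-equivariant identification $\Ocal=G/T\simeq K\times_T\pgot$ via the Cartan decomposition, the Witten/Kirwan deformation to localize the index along the components $Z_\beta$, and a term-by-term comparison of the resulting localized contributions with Blattner's formula \cite{Hecht-Schmid}. Your sketch correctly identifies the two technical points that must be settled (finiteness of the contributions to each $K$-isotypic piece, and the non-compact upgrade of the shifting identity), so the plan is sound and essentially the paper's own route rather than an alternative one.
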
 

\medskip

\subsection{Formal geometric quantization: main properties}

In this section, we recall two important functorial properties of the formal geometric quantization process $\QSf$.

We start with the following result of Hochs and Song.

\begin{theo}[\cite{Hochs-Song-duke}]\label{theo:hochs-song}
Let $(M,\Omega,\Phi_K)$ be a $\spinc$ prequantized Hamiltonian $K$-manifold. Assume that the moment map $\Phi_K$ is 
 proper and that the generic infinitesimal stabilizer for the $K$-action on 
$M$ is {\em abelian}. Then the following relation holds in 
$\wR(K)$:
\begin{equation}\label{eq:QR-spinc-non-compact}
\QSf_K(M)=\sum_{\Pcal\in\what{K}}\QS(M\slash\!\!\slash \Pcal)\, \pi^K_\Pcal.
\end{equation}
\end{theo}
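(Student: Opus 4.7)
The plan is to reduce the non-compact assertion to the compact case (Theorem \ref{theo:QR-spinc-compact}) via a shifting trick combined with a symplectic cut. The formal analogue of (\ref{eq:shift}) reduces the problem to proving, for each $\Pcal\in\what{K}$,
\begin{equation*}
\bigl[\QSf_K(M\times\Pcal^-)\bigr]^K = \QS(M\slash\!\!\slash\Pcal).
\end{equation*}
Here $M\times\Pcal^-$ carries the $\spinc$-bundle $\Scal\boxtimes\Scal_{\Pcal^-}$, its moment map is proper (as a sum of a proper map and a bounded one), and the zero fibre is the compact diagonal copy of $\Phi_K^{-1}(\Pcal)$.

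The first step is a vanishing lemma. Decomposing $\QSf_K(M\times\Pcal^-)=\sum_\beta \QS_K(M\times\Pcal^-,Z_\beta)$ as in Definition \ref{def:formal-quant-spin}, I would show that $\bigl[\QS_K(M\times\Pcal^-,Z_\beta)\bigr]^K=0$ whenever $\beta\neq 0$. This is purely local near $Z_\beta$: on a $K$-invariant tubular neighborhood, the normal form of the $\spinc$ Dirac symbol shifted by the Kirwan vector field shows that the $K$-isotypic components of the local transversally elliptic index have infinitesimal characters shifted off zero by a nonzero multiple of $\beta$, hence rule out the trivial $K$-representation. This is the transversally elliptic analogue of the Atiyah-style localization used in the proof of Theorem \ref{theo:QR-spinc-compact}.

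Next comes the compactification. Choose a relatively compact $K$-invariant open neighborhood $U$ of $\Phi_K^{-1}(\Pcal)$ in $M$ meeting $Z_{M\times\Pcal^-}$ only along its zero component $Z_0$. Applying Lerman's symplectic cut (in a $\spinc$ version) along a regular level set of a $K$-invariant proper function bounding $U$, I would produce a compact $\spinc$-prequantized Hamiltonian $K$-manifold $\tilde M$ containing $U$ as an invariant open subset, with $\tilde M\slash\!\!\slash\Pcal=M\slash\!\!\slash\Pcal$ and with abelian generic infinitesimal stabilizers preserved. Theorem \ref{theo:QR-spinc-compact} applied to $\tilde M$, together with the compact shifting trick (\ref{eq:shift}), yields $\bigl[\QS_K(\tilde M\times\Pcal^-)\bigr]^K=\QS(M\slash\!\!\slash\Pcal)$. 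On the other hand, excision for transversally elliptic symbols applied to $U\subset M$ and $U\subset\tilde M$ identifies $\bigl[\QS_K(M\times\Pcal^-,Z_0)\bigr]^K$ with $\bigl[\QS_K(\tilde M\times\Pcal^-,Z_0)\bigr]^K$. Combined with the vanishing lemma applied in both spaces, this gives the required equality.

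The chief obstacle is the cutting step: since $\spinc$-prequantization is strictly weaker than the existence of a prequantum line bundle, one must verify that Lerman's construction can be carried out in the $\spinc$ setting so as to produce $\tilde M$ together with an extension of $\Scal\boxtimes\Scal_{\Pcal^-}$ and of the moment map. The vanishing-at-infinity lemma is essentially formal given the compact case, but care is needed because the sum defining $\QSf_K$ converges only in $\what{R}(K)$; the justification of term-by-term projection onto $\pi^K_\Pcal$ relies on the properness of $\Phi_K$ ensuring that only finitely many $\beta\in\Bcal$ contribute to any given $K$-isotypic component of the local indices near $U$.
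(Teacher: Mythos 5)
The paper does not prove this statement; it is cited verbatim from Hochs--Song \cite{Hochs-Song-duke}, and the article uses it as a black box. There is therefore no internal proof to compare against, only the strategy in the cited reference. Hochs and Song's actual argument is analytic: they work with a Witten/Tian--Zhang-style deformation of the $\spinc$-Dirac operator and $L^2$-index techniques adapted to the proper-moment-map setting, together with Paradan-type localization at the critical set of $\|\Phi_K\|^2$. Your proposal instead follows the Ma--Zhang strategy (symplectic cutting to reduce to the compact $[Q,R]=0$ theorem plus excision), so it is a genuinely different route even though the two share the ``localize at $Z_\beta$, kill $\beta\neq 0$'' skeleton.

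A few substantive points on the sketch. First, the vanishing lemma as you phrase it (``infinitesimal characters shifted off zero'') is not quite the right mechanism in the $\spinc$ setting: the correct statement is a weight estimate on the $\tilde{K}$-action on the fibers of $\Scal\otimes\overline{\Scal_\Pcal}$ restricted to $M^\beta$, combined with the normal bundle decomposition and Atiyah's index formula for transversally elliptic symbols on vector bundles; this is the content of \cite{pep-RR, pep-vergne:witten} and is not ``purely local'' in the naive sense, since one must control the entire $Z_\beta$ component. Second, you correctly identify the real gap: $\spinc$ symplectic cutting is more delicate than the prequantum-line-bundle case. The cut space $\tilde M$ is in general an orbifold, and one must produce a $\spinc$ structure on $\tilde M$ whose determinant line prequantizes $2\Omega$; this requires choosing the cut level compatibly with the metaplectic-type correction, and the standard reference for the almost-complex/Kähler case does not carry over verbatim. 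Third, the appeal to the formal shifting trick $[\QSf_K(M\times\Pcal^-)]^K=\QS(M\slash\!\!\slash\Pcal)$ is itself a multiplicativity statement for $\QSf$ that in \cite{Hochs-Song-duke} is a corollary of the theorem, not an input; so invoking it at the start would need an independent proof (e.g., via the product formula for transversally elliptic indices, which is how the paper justifies it in Lemma 4.9). Modulo these three points your plan is coherent, but each of them hides a nontrivial amount of work, and the cutting step in particular is the place where a genuinely new construction is needed.
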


\begin{rem}
Identity (\ref{eq:QR-spinc-non-compact}) admits generalizations when we do not have conditions on the generic stabilizer and also when we allow the $2$-form $\Omega$ to be degenerate (see \cite{Hochs-Song-duke}).
\end{rem}

Like in the compact setting, consider an element $\gamma$ belonging to the center of $K$ that acts 
trivially on the manifold $M$. Let $\Pcal\in \wK$ and let $\Pcal^-$ be the orbit $\Pcal$ with opposite symplectic structure.
We are interested by the action of $\gamma$ on the fibers of the $\spinc$-bundle $\Scal\boxtimes \Scal_{\Pcal^-}$. 
We denote $[\Scal\boxtimes \Scal_{\Pcal^-}]^\gamma$ the subbundle where $\gamma$ acts trivially.

Lemma \ref{lem:gamma-compact} extends to the non-compact setting.
\begin{lem}\label{lem:gamma-non-compact}
If $[\Scal\boxtimes \Scal_{\Pcal^-}]^\gamma=0$ then $\QS(M\slash\!\!\slash \Pcal)=0$.
\end{lem}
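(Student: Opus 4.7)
The plan is to mimic the proof of Lemma \ref{lem:gamma-compact} by realizing the rational number $\QS(M\slash\!\!\slash \Pcal)$ as the index of a Dirac operator on a compact symplectic orbifold and then showing that the underlying reduced $\spinc$-bundle is the zero bundle.

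First I would invoke the very definition of $\QS(M\slash\!\!\slash \Pcal)$ given by Proposition \ref{prop:QS-definition}: pick $\epsilon \in \tgot^*$ generic and small so that $\Pcal_\epsilon$ is a regular value of $\Phi_K$. Since $\Phi_K$ is proper, $Y_\epsilon := \Phi_K^{-1}(\Pcal_\epsilon)$ is a \emph{compact} smooth $K$-invariant submanifold of $M$, and $M\slash\!\!\slash \Pcal_\epsilon = Y_\epsilon / K$ is a compact symplectic orbifold whose $\spinc$-bundle $\Scal_{\mathrm{red}}$ is canonically built from the restriction of $\Scal$ to $Y_\epsilon$. By definition $\QS(M\slash\!\!\slash \Pcal) = \QS(M\slash\!\!\slash \Pcal_\epsilon)$, the latter being the orbifold $\spinc$ Dirac index of $\Scal_{\mathrm{red}}$. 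So it suffices to show $\Scal_{\mathrm{red}} = 0$.

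Next I would use the shifting-trick interpretation of the reduction. Realise $M\slash\!\!\slash \Pcal_\epsilon$ as the zero-level reduction of the diagonal $K$-action on $M \times \Pcal_\epsilon^-$; accordingly, the fibre of $\Scal_{\mathrm{red}}$ at a point $[m,\mu]$ is a subspace of the stabiliser-invariants
$$
\bigl(\Scal\boxtimes\Scal_{\Pcal_\epsilon^-}\bigr)\bigl|_{(m,\mu)}^{K_{(m,\mu)}}.
$$
Since $\gamma$ lies in the centre of $K$ and acts trivially on both $M$ and on the coadjoint orbit $\Pcal_\epsilon$, we have $\gamma \in K_{(m,\mu)}$ for every pair $(m,\mu)$, so each such fibre is contained in $[\Scal\boxtimes\Scal_{\Pcal_\epsilon^-}]^\gamma|_{(m,\mu)}$.

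The remaining step is to deduce, from the hypothesis $[\Scal\boxtimes\Scal_{\Pcal^-}]^\gamma = 0$, that $[\Scal\boxtimes\Scal_{\Pcal_\epsilon^-}]^\gamma = 0$ as well. Because $\Pcal_\epsilon$ is a homogeneous $K$-manifold on which $\gamma$ acts trivially, $\gamma$ acts on every fibre of $\Scal_{\Pcal_\epsilon^-}$ by a single scalar $\chi_\epsilon(\gamma)$. As $\epsilon \to 0$ the family $\Scal_{\Pcal_\epsilon^-}$ varies continuously through equivariant line bundles on the coadjoint orbit, hence $\chi_\epsilon(\gamma)$ varies continuously in $\epsilon$; since the character group of the compact abelian group $\overline{\langle \gamma \rangle}$ is discrete, $\chi_\epsilon(\gamma) = \chi_0(\gamma)$ for $\epsilon$ small enough. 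The hypothesis is equivalent to saying that $\chi_0(\gamma)^{-1}$ is not an eigenvalue of $\gamma$ on any fibre $\Scal|_m$, and the same then holds with $\chi_\epsilon$, giving $[\Scal\boxtimes\Scal_{\Pcal_\epsilon^-}]^\gamma = 0$. Hence $\Scal_{\mathrm{red}} = 0$, so both kernel and cokernel of its Dirac operator vanish and $\QS(M\slash\!\!\slash \Pcal_\epsilon) = 0$.

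The main obstacle is the second step: justifying the fibrewise identification of $\Scal_{\mathrm{red}}$ in the non-regular situation (where $\Pcal_\epsilon$ is not admissible and one uses the \emph{peculiar} $\spinc$-structure of Proposition \ref{prop:QS-definition}). One must check that the twist used to define $\Scal_{\mathrm{red}}$ depends continuously on $\epsilon$, so that the $\gamma$-character argument of the third step goes through. Everything else is a direct transcription of the compact argument of Lemma \ref{lem:gamma-compact}.
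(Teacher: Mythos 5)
The approach you take is genuinely different from the paper's, and as written it has a gap precisely at the point you flag as the ``main obstacle.'' You attempt to analyse the reduced $\spinc$-bundle on $M\slash\!\!\slash\Pcal_\epsilon$ fibrewise, and in doing so you invoke a $\spinc$-bundle $\Scal_{\Pcal_\epsilon^-}$ on the deformed orbit $\Pcal_\epsilon$. But for generic small $\epsilon$ the orbit $\Pcal_\epsilon=K(\lambda+\epsilon)$ is \emph{not} admissible, so there is no canonical $\spinc$-bundle on it; the ``character $\chi_\epsilon(\gamma)$'' in your third step is therefore not well defined, and the continuity argument that is supposed to propagate the hypothesis from $\Pcal$ to $\Pcal_\epsilon$ has nothing to run on. In the usual construction of the ``peculiar'' $\spinc$-structure (cf.\ \cite{pep-JSG,pep-vergne-acta}), one does not change the orbit at all: one works throughout with the fixed bundle $\Scal\boxtimes\Scal_{\Pcal^-}$ on $M\times\Pcal^-$ and shifts the \emph{level} of the moment map $(m,\xi)\mapsto\Phi_K(m)-\xi$ from $0$ to $\epsilon$. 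With that set-up your continuity step would be unnecessary, since the $\gamma$-action is controlled directly by $\Scal\boxtimes\Scal_{\Pcal^-}$; and the fibrewise statement needs the additional observation that $\gamma$ (being central and acting trivially on $M$ and on $\Pcal$) acts trivially on the Clifford factor for the vertical/normal directions $\kgot/\kgot_\xi\oplus(\kgot/\kgot_\xi)^*$, so that the $\gamma$-scalar is preserved in passing from $\Scal\boxtimes\Scal_{\Pcal^-}$ to $\Scal_{\rm red}$. Your proof can be repaired along these lines, but it is not complete as stated.

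The paper sidesteps the reduced-bundle analysis entirely. It first uses the multiplicative property of Hochs--Song \cite{Hochs-Song-duke} to justify the shifting trick in the non-compact setting, obtaining
$$
\QS(M\slash\!\!\slash \Pcal)=\bigl[\QSf_K(M\times\Pcal^-)\bigr]^K=\bigl[\QS_K(M\times\Pcal^-,Z_0)\bigr]^K,
$$
where $Z_0=\{(m,\xi):\Phi_K(m)=\xi\}$ is compact. The right-hand side is the $K$-invariant part of the index of a $K$-transversally elliptic operator $D_0$ acting on sections of $\Scal\boxtimes\Scal_{\Pcal^-}$; since $\gamma$ is central and acts trivially on the base, $[\ker D_0]^K\subset[\ker D_0]^\gamma\subset\Gamma([\Scal\boxtimes\Scal_{\Pcal^-}]^\gamma)=0$, and likewise for the cokernel. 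This is a direct transcription of the compact Lemma \ref{lem:gamma-compact}, and requires no contact with the peculiar $\spinc$-structure on the reduced orbifold. The paper's route is shorter and avoids the delicate point on which your proposal stumbles; your route, once the gap is filled, gives a more ``geometric'' explanation of the vanishing at the level of the reduced space itself.
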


\begin{proof} The multiplicative property proved by Hochs and Song \cite{Hochs-Song-duke} tells us that the shifting trick still 
holds in the non compact setting: the multiplicity of $\pi^K_\Pcal$ in $\QSf_K(M)$ is equal to 
$[\QSf_K(M\times \Pcal^-)]^K$. If we suppose furthermore that the generic infinitesimal stabilizer is abelian we obtain 
\begin{eqnarray*}\label{eq:shift-non-compact}
\QS(M\slash\!\!\slash \Pcal)&=&\left[\QSf_K(M\times \Pcal^-)\right]^K\\
&=&\left[\QS_K(M\times \Pcal^-, Z_0)\right]^K
\end{eqnarray*}
where $Z_0\subset M\times \Pcal^-$ is the compact set $\{(m,\xi)\in M\times \Pcal^-,\, \Phi_K(m)=\xi\}$.

The quantity $\QS_K(M\times \Pcal^-, Z_0)\in \wR(K)$ is computed as an index of a $K$-transversally elliptic operator $D_0$ 
acting on the sections of $\Scal\boxtimes \Scal_{\Pcal^-}$. The argument used in the compact setting still work 
(see Lemma 1.3 in \cite{pep-vergne:witten}):   if $[\Scal\boxtimes \Scal_{\Pcal^-}]^\gamma=0$ then 
$[{\rm ker }(D_0)]^K$ and  $[{\rm coker}(D_0)]^K$ are reduced to $0$. $\Box$ 
\end{proof}

\medskip

Another important property of the formal geometric quantization procedure is the functoriality relatively to restriction to subgroup. 
Let $H\subset K$ be a closed connected subgroup. We denote $\Phi_H:M\to\hgot^*$ the moment map relative to the 
$H$-action: it is equal to the composition of $\Phi_K$ with the projection $\kgot^*\to\hgot^*$.

\begin{theo}[\cite{pep-formal3}]\label{theo:pep-formal-3}
Let $(M,\Omega,\Phi_K)$ be a $\spinc$ prequantized Hamiltonian $K$-manifold. Assume that the moment map $\Phi_H$ is a proper. Then the element $\QSf_K(M)\in\wR(K)$ is $H$-admissible and we have 
$$
\QSf_K(M)\vert_H=\QSf_H(M).
$$
\end{theo}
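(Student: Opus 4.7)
The plan is to deduce the statement from the [Q,R]=0 formula of Hochs-Song (Theorem \ref{theo:hochs-song}) applied to both the $K$- and the $H$-actions, combined with the shifting trick. First I would record the simple point that since $\Phi_H=p\circ \Phi_K$ where $p:\kgot^*\to\hgot^*$ is the canonical projection, properness of $\Phi_H$ forces properness of $\Phi_K$: for any compact $C\subset \kgot^*$ the preimage $\Phi_K^{-1}(C)$ is a closed subset of the compact set $\Phi_H^{-1}(p(C))$. Hence both $\QSf_K(M)\in\wR(K)$ and $\QSf_H(M)\in\wR(H)$ are already well-defined objects.

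Next I would apply Theorem \ref{theo:hochs-song} to both actions to write
$$\QSf_K(M)=\sum_{\Pcal\in\wK}\QS(M\slash\!\!\slash_K\Pcal)\,\pi^K_\Pcal,\qquad \QSf_H(M)=\sum_{\Qcal\in\wH}\QS(M\slash\!\!\slash_H\Qcal)\,\pi^H_\Qcal.$$
The problem then becomes to verify, for each $\Qcal\in\wH$, the identity
$$\QS(M\slash\!\!\slash_H\Qcal)=\sum_{\Pcal\in\wK}\QS(M\slash\!\!\slash_K\Pcal)\,[\pi^K_\Pcal\vert_H:\pi^H_\Qcal],$$
with absolute convergence of the right-hand side (this gives $H$-admissibility automatically). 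I would attack both the convergence and the equality at once using the shifting trick on $M\times\Qcal^-$. On the one hand, the $H$-shifting trick gives $\QS(M\slash\!\!\slash_H\Qcal)=[\QSf_H(M\times\Qcal^-)]^H$, and by Definition \ref{def:formal-quant-spin} this is the $H$-invariant part of the localized transversal index $\QS_H(M\times\Qcal^-,Z_0)$, where $Z_0=\{(m,\xi)\in M\times\Qcal^-:\Phi_H(m)=\xi\}$ is compact because $\Phi_H$ is proper. On the other hand, the Hochs-Song multiplicative property applied to $K$ gives $[\QSf_K(M)\vert_H:\pi^H_\Qcal]=[\QSf_K(M\times\Qcal^-)]^H$, again localizable near $Z_0$.

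The main obstacle is to show that these two $H$-indices agree. Both are $H$-invariants of transversally elliptic symbols on $M\times\Qcal^-$ that differ only in which Kirwan vector field is used in the Witten-type deformation (the $K$-version uses $\kappa_K$, the $H$-version uses $\kappa_H$). My plan is to interpolate via $\kappa_t=(1-t)\kappa_K+t\,\kappa_H$ and show that the zero set of the deformed symbol stays in a fixed compact neighborhood of $Z_0$ for all $t\in[0,1]$; this is the place where properness of $\Phi_H$ is genuinely used, since without it the homotopy could move zeroes off to infinity. Once this uniform compactness is established, homotopy invariance of the transversal equivariant index gives the equality of the two localized indices, hence of their $H$-invariant parts, completing the argument. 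As a byproduct, the finiteness of the $H$-multiplicities in $\QSf_K(M)\vert_H$ is forced, since each $[\QS_K(M\times\Qcal^-,Z_0)]^H$ is a well-defined integer.
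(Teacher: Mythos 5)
The paper does not actually prove Theorem~\ref{theo:pep-formal-3}; it is quoted from \cite{pep-formal3} and used as a black box (it appears again through Corollary~\ref{coro:restriction-pi-O-G-H} in the proof of Theorem~\ref{theo:main}). So there is no internal proof against which to match your argument. Assessed on its own, your strategy --- [Q,R]=0 for both groups, shift by $\Qcal^-$, and then a homotopy of Witten-deformed symbols --- is the right circle of ideas, but the sketch has real gaps at the two places where the work actually happens.

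First, Theorem~\ref{theo:hochs-song} as stated in this paper carries the hypothesis that the generic infinitesimal $K$-stabilizer is abelian; the statement you are trying to prove does not. You invoke Hochs--Song for both the $K$- and the $H$-actions without discharging this hypothesis. There is a remark that the abelian-stabilizer restriction can be lifted, but you would have to say explicitly that you are using the more general version, or observe that the reduction spaces $M\slash\!\!\slash_K\Pcal$ in the general form are defined differently, otherwise the very first displayed equalities are not justified.

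Second, and more seriously, the interpolation $\kappa_t=(1-t)\kappa_K+t\,\kappa_H$ is not well defined on $M\times\Qcal^-$: the group $K$ does not act on $\Qcal^-$, so there is no $K$-Kirwan vector field on the product, and the object ``$\QSf_K(M\times\Qcal^-)$'' does not parse without specifying which group acts. If one instead works on $M$ alone with the two vector fields $\kappa_K,\kappa_H\in\mathfrak{X}(M)$, the zero set of $\kappa_t$ is the critical set of an interpolated function and is \emph{not} compact in general; only certain pieces are. The decomposition of $Z_M$ into compact components $Z_\beta$ is different for $K$ and for $H$, and the heart of the matter is to match the two decompositions and show that the far-away pieces contribute nothing to a fixed $\pi^H_\Qcal$-isotypic component --- this is precisely what makes the $H$-admissibility and the equality nontrivial, and it is not implied by ``the zero set stays in a fixed compact neighborhood of $Z_0$,'' which is false as stated. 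The properness of $\Phi_H$ enters through the structure of these components, not through a single compactness estimate on the interpolated zero set. As it stands, your argument identifies the correct mechanism but leaves the analytic core (compactness, matching of components, convergence of the double sum) unaddressed.
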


If we apply the previous Theorem to the $\spinc$-prequantized coadjoint orbits $\Ocal\in\wG_d$, we obtain the following extension of Proposition \ref{prop:Q-K-O}. This result was obtained by other means by Duflo-Vergne  
\cite{Duflo-Vergne2011}.

\begin{coro}\label{coro:restriction-pi-O-G-H}
Let $\Ocal\in\wG_d$, and $H\subset K$ a closed connected subgroup such that 
$\Phi_H:\Ocal\to \hgot^*$ is proper. Then $\pi^G_\Ocal$ is $H$-admissible and 
$$
\QSf_H(\Ocal)=\pi^G_\Ocal\vert_H.
$$
\end{coro}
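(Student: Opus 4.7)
The plan is to assemble the corollary directly from the two main results at our disposal: Proposition \ref{prop:Q-K-O}, which identifies $\QSf_K(\Ocal)$ with $\pi^G_\Ocal\vert_K$, and Theorem \ref{theo:pep-formal-3}, which gives the functoriality of $\QSf$ under restriction to a subgroup whenever the relevant moment map is proper.

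First, I would set up the hypotheses needed to invoke Theorem \ref{theo:pep-formal-3} on the coadjoint orbit $\Ocal$ viewed as a Hamiltonian $K$-manifold. By Lemma \ref{lem:admissible=prequantized}, the regular admissible elliptic orbit $\Ocal$ carries a canonical $K$-equivariant $\spinc$ prequantization. Moreover, as already observed in the discussion preceding Proposition \ref{prop:Q-K-O}, the moment map $\Phi_K:\Ocal\to\kgot^*$ (for the $K$-action on $\Ocal$) is proper — this is the content of Theorem \ref{theo:equivalence} applied with $G'=G$. Thus $(\Ocal,\Omega_\Ocal,\Phi_K)$ is a $\spinc$-prequantized Hamiltonian $K$-manifold with proper $K$-moment map, and its formal $\spinc$ quantization $\QSf_K(\Ocal)\in\wR(K)$ is well-defined.

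Next, I would invoke Theorem \ref{theo:pep-formal-3} with the subgroup $H\subset K$. The hypothesis that $\Phi_H:\Ocal\to\hgot^*$ is proper is exactly what is assumed in the corollary, so the theorem applies and yields that $\QSf_K(\Ocal)$ is $H$-admissible as an element of $\wR(K)$, with
\begin{equation*}
\QSf_K(\Ocal)\vert_H=\QSf_H(\Ocal).
\end{equation*}
On the other hand, Proposition \ref{prop:Q-K-O} identifies $\QSf_K(\Ocal)=\pi^G_\Ocal\vert_K$. Combining these two equalities gives
\begin{equation*}
\pi^G_\Ocal\vert_H=(\pi^G_\Ocal\vert_K)\vert_H=\QSf_K(\Ocal)\vert_H=\QSf_H(\Ocal),
\end{equation*}
which is the desired formula.

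Finally, the $H$-admissibility of the $G$-representation $\pi^G_\Ocal$ follows at once from the previous display: the right-hand side $\QSf_H(\Ocal)$ lives in $\wR(H)$, so it is a discrete Hilbertian sum of irreducible $H$-representations with finite multiplicities, which is precisely the meaning of $H$-admissibility for $\pi^G_\Ocal$. There is essentially no hard step here beyond invoking the two cited results; the only point that demands attention is making sure that the $K$-properness of $\Phi_K$ on $\Ocal$ is in place so that the formal quantization $\QSf_K(\Ocal)$ itself is defined before we can restrict to $H$, but this is immediate from the general properness property of the $K$-moment map on regular elliptic orbits.
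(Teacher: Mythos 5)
Your proposal is correct and is essentially identical to the paper's (implicit) argument: the paper simply states, in the sentence preceding the corollary, that it follows by applying Theorem~\ref{theo:pep-formal-3} to the $\spinc$-prequantized orbit $\Ocal$ and combining with Proposition~\ref{prop:Q-K-O}, which is exactly the chain $\QSf_H(\Ocal)=\QSf_K(\Ocal)\vert_H=\pi^G_\Ocal\vert_K\vert_H=\pi^G_\Ocal\vert_H$ you wrote out. Your only slight imprecision is crediting the properness of $\Phi_K:\Ocal\to\kgot^*$ to Theorem~\ref{theo:equivalence} with $G'=G$ (where $K$ is not semi-simple); it is cleaner to invoke the unlabeled lemma stating that $\Phi_G$ proper iff $\Phi_K$ proper, with $\Phi_G$ the closed inclusion $\Ocal\hookrightarrow\ggot^*$, or simply to cite the remark preceding Proposition~\ref{prop:Q-K-O} where the paper records this properness.
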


\section{$\Spinc$ quantization of $G$-Hamiltonian manifolds}
 
In this section $G$ denotes a connected semi-simple Lie group, and we consider a symplectic manifold $(M,\Omega)$ equipped with an Hamiltonian action of $G$: we denote $\Phi_G:M\to \ggot^*$ the corresponding moment map.

\subsection{Proper$^2$ Hamiltonian $G$-manifolds}\label{sec:proper-2}

In this section we suppose that:
\begin{enumerate}
\item the moment map $\Phi_G$ is {\em proper},
\item the $G$-action on $M$ is {\em proper}.
\end{enumerate}
For simplicity, we says that $(M,\Omega,\Phi_G)$ is a proper$^2$ Hamiltonian $G$-manifold.

 Following Weinstein \cite{Weinstein01}, we consider the $G$-invariant open subset 
\begin{equation}\label{eq:def-se}
\ggot^*_{se}=\{\xi\in\ggot^*\,\vert\, G_\xi \ \mathrm{is\  compact} \}
\end{equation}
of strongly elliptic elements.  It is non-empty if and only if  the groups $G$ and $K$ have the same rank  : real semi-simple Lie groups with this property are the ones admitting discrete series. If we denote 
$\tgot^*_{se}:=\ggot^*_{se}\cap\tgot^*$, we see that $\ggot^*_{se}=G\cdot\tgot^*_{se}$. In other words, any coadjoint 
orbit  contained in $\ggot^*_{se}$ is elliptic.

\medskip

{

First we recall the geometric properties associated to proper$^2$ Hamiltonian $G$-manifolds. 
We denote $K$ a maximal compact subgroup of $G$ and we denote $\Phi_K:M\to\kgot^*$ the moment map relative to the 
$K$-action on $(M,\Omega)$.

\begin{prop}[\cite{pep-jems}]
\label{prop:proper-2} 
Let $(M,\Omega,\Phi_G)$ be a proper$^2$ Hamiltonian $G$-manifold.

Then:
\begin{enumerate}
\item the map $\Phi_K$ is proper,
\item the set $\ggot^*_{se}$ is non-empty,
\item the image of $\Phi_G$ is contained in $\ggot^*_{se}$,
\item the set $N:=\Phi_G^{-1}(\kgot^*)$ is a smooth $K$-submanifold of $M$,
\item the restriction of $\Omega$ on $N$ defines a symplectic form $\Omega_N$,
\item the map $[g,n]\mapsto gn$ defines a diffeomorphism $G\times_K N\simeq M$.
\end{enumerate}
\end{prop}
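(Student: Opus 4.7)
The plan is to prove item (3) first, as it is the conceptual hinge on which items (1), (2), (4), (5), (6) all depend. Fix $m \in M$ and set $\xi := \Phi_G(m)$. By $G$-equivariance of $\Phi_G$, one has $G_m \subset G_\xi$, so the orbit $G_\xi \cdot m \simeq G_\xi/G_m$ sits inside $\Phi_G^{-1}(\xi)$, which is compact by properness of $\Phi_G$. Hence $G_\xi/G_m$ is compact, and since $G_m$ is compact by properness of the $G$-action, $G_\xi$ itself is compact, i.e.\ $\xi \in \ggot^*_{se}$. Item (2) is then immediate.

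For (1), it suffices to show that the projection $\p_{\kgot,\ggot} : \ggot^*_{se} \to \kgot^*$ is proper, since $\Phi_K = \p_{\kgot,\ggot} \circ \Phi_G$ will then inherit properness from (3) and the hypothesis on $\Phi_G$. Fixing a Cartan-compatible $K$-invariant scalar product on $\ggot$, the Killing form reads $B(X, X) = \|X_\pgot\|^2 - \|X_\kgot\|^2$; and every $\xi \in \ggot^*_{se}$ is $G$-conjugate to an element of $\tgot^* \subset \kgot^*$, so $G$-invariance of $B$ yields $B(\xi, \xi) \le 0$, giving the pointwise bound $\|\xi_\pgot\|^2 \le \|\xi_\kgot\|^2$ on $\ggot^*_{se}$, which is the desired properness.

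For (4) and (5), decompose $\Phi_G = \Phi_K + \Phi_P$ with $\Phi_P : M \to \pgot^*$, so that $N = \Phi_P^{-1}(0)$. At $m \in N$, item (3) puts $\Phi_G(m)$ in $\kgot^* \cap \ggot^*_{se}$, and stabilizers of strongly elliptic points of $\kgot^*$ lie in $K$, hence $\ggot_m \subset \ggot_{\Phi_G(m)} \subset \kgot$. Therefore $Y \mapsto Y_M(m)$ is injective on $\pgot$, and the identity $\iota(Y_M)\Omega = -d\langle \Phi_G, Y\rangle$ identifies the transpose of $d\Phi_P|_m$ with $Y \mapsto \iota(Y_M(m))\Omega$, which is injective by non-degeneracy of $\Omega$; this makes $d\Phi_P|_m$ surjective, so $N$ is a smooth $K$-stable submanifold ($K$-stability being automatic since $K$ preserves $\kgot^* \subset \ggot^*$). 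For (5), $T_m N = (\pgot \cdot m)^\Omega$, so non-degeneracy of $\Omega|_N$ reduces to non-degeneracy of $\Omega$ on $\pgot \cdot m$; the formula $\Omega(X_M, Y_M)|_m = \langle \Phi_G(m), [X, Y]\rangle$ for $X, Y \in \pgot$ reduces this in turn to $\ggot_{\Phi_G(m)} \cap \pgot = 0$, which again follows from $\Phi_G(m) \in \ggot^*_{se}$.

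Item (6) is where I expect the main obstacle. Consider $\varphi : G \times_K N \to M$, $[g, n] \mapsto g \cdot n$; it is well-defined and $G$-equivariant. A tangent-space calculation at $[e, n]$, using the symplectic non-degeneracy of (5) together with $\ggot_n \cap \pgot = 0$, shows that $d\varphi$ is an isomorphism, so $\varphi$ is a local diffeomorphism. Surjectivity follows from (3), since $\ggot^*_{se} \subset G \cdot \kgot^*$. Injectivity reduces to the structural claim that $G\xi \cap \kgot^* = K\xi$ for every $\xi \in \kgot^* \cap \ggot^*_{se}$: given $h \in G$ with $h n_1 = n_2$ and $n_1, n_2 \in N$, this claim produces $k \in K$ with $k \Phi_G(n_1) = h \Phi_G(n_1)$, forcing $k^{-1} h \in G_{\Phi_G(n_1)} \subset K$ and hence $h \in K$. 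The structural claim itself is a classical consequence of the polar decomposition of $G$ together with compactness of $G_\xi$; I would isolate it as a preliminary lemma.
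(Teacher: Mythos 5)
The paper itself does not prove Proposition~\ref{prop:proper-2}; it merely cites \cite{pep-jems}, so there is no in-text proof to compare against. That said, your argument is the standard one (going back to Weinstein \cite{Weinstein01} and reproduced in \cite{pep-jems}) and is correct as written: properness of $\Phi_G$ plus properness of the action forces $G_\xi$ compact, hence image in $\ggot^*_{se}$; the Killing-form bound $\|\xi_\pgot\|\leq\|\xi_\kgot\|$ on $\ggot^*_{se}$ converts properness of $\Phi_G$ into properness of $\Phi_K$; and at a point $m\in N$ the key identity $\ggot_{\Phi_G(m)}\subset\kgot$ (from strong ellipticity of $\Phi_G(m)\in\kgot^*$) drives both the submersion argument for (4) and the non-degeneracy of $\Omega$ on $\pgot\cdot m$ for (5), which in turn gives the tangent-space isomorphism in (6).

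The one piece you leave unproved is the ``preliminary lemma'' $G\xi\cap\kgot^* = K\xi$ for $\xi\in\kgot^*\cap\ggot^*_{se}$, on which injectivity of $\varphi$ rests. Since you flagged it explicitly, let me close it. Write $g=k\exp(X)$ with $k\in K$, $X\in\pgot$, and suppose $g\cdot\xi\in\kgot^*$; then $\exp(X)\cdot\xi\in\kgot^*$ as well. Applying the Cartan involution $\theta$, which fixes $\kgot^*$ pointwise and acts as $-1$ on $\pgot$, gives $\exp(-X)\cdot\xi=\theta(\exp(X)\cdot\xi)=\exp(X)\cdot\xi$, so $\exp(2X)\in G_\xi$. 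Since $G_\xi$ is compact, $\mathrm{Ad}(\exp(2X))=e^{2\,\mathrm{ad}(X)}$ has all eigenvalues on the unit circle; but $\mathrm{ad}(X)$ is symmetric with real eigenvalues, forcing those eigenvalues to vanish, and semisimplicity of $\ggot$ then gives $X=0$. Hence $g\in K$ and $g\cdot\xi\in K\xi$. With this lemma in place, your injectivity step and the local diffeomorphism argument combine to give (6), and the proposal is complete.
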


\medskip

Let $T$ be a maximal torus in $K$, and let $\tgot^*_+$ be a Weyl chamber. Since any coadjoint orbit in $\ggot^*_{se}$ 
is elliptic, the coadjoint orbits belonging to the image of $\Phi_G:N\to \ggot^*$ are parametrized by the set
\begin{equation}
\Delta_G(M)=\Phi_G(M)\cap \tgot^*_+.
\end{equation}

We remark that $\tgot^*_+\cap \ggot^*_{se}$ is equal to 
$(\tgot^*_+)_{se}:=\{\xi\in \tgot^*_+, (\xi,\alpha)\neq 0,\ \forall \alpha\in\Rgot_n\}$.  
The connected component $(\tgot^*_+)_{se}$ are called chambers and if 
$\Ccal$ is a chamber, we denote $\wG_d(\Ccal)$ the set of regular admissible elliptic orbits intersecting $\Ccal$ 
(see Definition \ref{defi:chamber}).

The following fact was first noticed by Weinstein \cite{Weinstein01}.

\begin{prop}\label{prop:G-convexity}
$\Delta_G(M)$ is a convex polyhedral set contained in a unique chamber $\Ccal_M\subset (\tgot^*_+)_{se}$. 
\end{prop}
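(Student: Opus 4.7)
The plan is to reduce the statement for the $G$-Hamiltonian manifold $M$ to the classical convexity theorem applied to the Hamiltonian $K$-submanifold $N = \Phi_G^{-1}(\kgot^*)$ furnished by Proposition \ref{prop:proper-2}.

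First, I would observe that the diffeomorphism $G\times_K N\simeq M$ and the $G$-equivariance of $\Phi_G$ give the identity
\begin{equation*}
\Phi_G(M) \;=\; G\cdot \Phi_G(N) \;=\; G\cdot \Phi_K(N),
\end{equation*}
where we used that $\Phi_G(N)\subset\kgot^*$ coincides with $\Phi_K(N)$. Since $\Phi_G(M)\subset\ggot^*_{se}$, every orbit $G\eta$ that meets $\Phi_G(M)$ is elliptic and regular-elliptic in the sense that $G_\eta$ is compact; for such an orbit, $G\eta\cap\kgot^*$ is a single $K$-orbit. Combined with the fact that every $K$-orbit in $\kgot^*$ meets $\tgot^*_+$ in exactly one point, this yields the crucial reduction
\begin{equation*}
\Delta_G(M)\;=\;\Phi_G(M)\cap\tgot^*_+\;=\;\Phi_K(N)\cap\tgot^*_+\;=:\;\Delta_K(N).
\end{equation*}

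Next, I would check that the hypotheses of the non-abelian convexity theorem for proper moment maps (in the form due to Lerman--Meinrenken--Tolman--Woodward, generalizing Kirwan) are met for the $K$-Hamiltonian manifold $(N,\Omega_N,\Phi_K\vert_N)$. Properness of $\Phi_K\vert_N:N\to\kgot^*$ is immediate: $N$ is closed in $M$ (as the preimage of $\kgot^*\subset\ggot^*$ under $\Phi_G$) and $\Phi_K:M\to\kgot^*$ is proper by item (1) of Proposition \ref{prop:proper-2}. The convexity theorem then guarantees that $\Delta_K(N)$ is a closed convex polyhedral subset of $\tgot^*_+$, and in particular $\Delta_G(M)$ is convex.

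Finally, for the chamber statement, item (3) of Proposition \ref{prop:proper-2} gives $\Delta_G(M)\subset \tgot^*_+\cap\ggot^*_{se}=(\tgot^*_+)_{se}$. Since $(\tgot^*_+)_{se}$ is an open subset of $\tgot^*_+$ whose connected components are by definition the chambers, and since $\Delta_G(M)$ is convex hence connected, $\Delta_G(M)$ must lie inside a unique connected component $\Ccal_M$ of $(\tgot^*_+)_{se}$.

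The only delicate point is the reduction step $\Delta_G(M)=\Delta_K(N)$: one must confirm that orbits through $\ggot^*_{se}$ are elliptic in the sharp sense that $G\eta\cap\kgot^*$ is a single $K$-orbit (so that $K\cdot\Phi_G(N)$ captures all of $G\cdot\Phi_G(N)$ after projecting to $\tgot^*_+$). This follows because any $\eta\in\ggot^*_{se}$ is $G$-conjugate to an element of $\kgot^*$, and on $\kgot^*$ the $G$-equivalence restricts to $K$-equivalence via the Cartan decomposition together with the compactness of the stabilizer $G_\eta$; after that, the convexity theorem for proper moment maps does the remaining work.
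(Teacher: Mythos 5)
Your proof is correct and follows essentially the same route as the paper's: reduce to the $K$-Hamiltonian submanifold $N$ via the diffeomorphism $G\times_K N\simeq M$, apply the non-abelian convexity theorem to $\Phi_K^N$, and deduce the chamber statement from connectedness of the convex set $\Delta_G(M)\subset(\tgot^*_+)_{se}$. You spell out more carefully the identification $\Delta_G(M)=\Delta_K(N)$ and the properness of $\Phi_K\vert_N$, both of which the paper states without elaboration.
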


\begin{proof} We denote $\Phi_K^N: N\to \kgot^*$ the restriction of the map $\Phi_G$ on the sub-manifold $N$. 
It corresponds to the moment map relative to the $K$-action  on $(N,\Omega_N)$: notice that $\Phi_K^N$ is a proper map.

The diffeomorphism $G\times_K N\simeq M$ shows that 
the set $\Delta_G(M)$ is equal to $\Delta_K(N):={\rm Image}(\Phi_K^N)\cap \tgot^*_+$, and the Convexity 
Theorem \cite{Kirwan-Invent84,L-M-T-W} asserts that $\Delta_K(N)$ is a convex polyhedral subset of the Weyl chamber. Finally since $\Delta_K(N)$ 
is connected and contained in $(\tgot^*_+)_{se}$, it must belongs to a unique chamber $\Ccal_M$. $\Box$
\end{proof}

\subsection{$\Spinc$-quantization of proper${}^2$ Hamiltonian $G$-manifolds}\label{sec:quantization-proper-2}

Now we assume that our proper$^2$ Hamiltonian $G$-manifold $(M,\Omega,\Phi_G)$ is $\spinc$-prequantized by a 
$G$-equivariant $\spinc$-bundle $\Scal$. 


\medskip

Note that $\pgot$ is even dimensional since the groups $G$ and $K$ have the same rank.  Recall that 
the morphism $K\to {\rm SO}(\pgot)$ lifts to a morphism $\tilde{K}\to  {\rm Spin}(\pgot)$, where 
$\tK\to K$ is either an isomorphism or a two-fold cover (see Section \ref{sec:spinor-representation}). We start with the

\begin{lem}${}$

$\bullet$ The $G$-equivariant $\spinc$ bundle $\Scal$ on $M$ induces a $\tilde{K}$-equivariant $\spinc$ bundle $\Scal_N$ on $N$ such that $\det(\Scal_N)=\det(\Scal)\vert_N$.

$\bullet$ The $\tilde{K}$-Hamiltonian manifold $(N,\Omega_N,\Phi^N_K)$ is $\spinc$-prequantized by $\Scal_N$. 
\end{lem}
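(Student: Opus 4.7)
The plan is to use the diffeomorphism $M\simeq G\times_K N$ from Proposition~\ref{prop:proper-2} to split $\Scal\vert_N$ into a spinor factor along $\pgot$ and a residual $\spinc$-bundle on $N$, then to verify the Kostant--Souriau relations by restriction from $M$.

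First, from $M\simeq G\times_K N$ together with $\Phi_G(N)\subset\kgot^*$, I obtain the $K$-equivariant splitting
$$
\T M\vert_N \simeq \T N \oplus (N\times\pgot),
$$
where the second summand arises from the infinitesimal $\pgot$-action. Consequently the Clifford bundle factors as $\mathrm{Cl}(\T M\vert_N)\simeq \mathrm{Cl}(\T N)\mathbin{\widehat{\otimes}}\mathrm{Cl}(\pgot)$. Pulling everything back along $\tilde K\to K$, so that the spinor module $\Scal_\pgot$ of Section~\ref{sec:spinor-representation} becomes equivariant, I view $\Scal\vert_N$ as a $\tilde K$-equivariant, fibrewise irreducible $\mathrm{Cl}(\T N)\mathbin{\widehat{\otimes}}\mathrm{Cl}(\pgot)$-module. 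The standard Clifford factorization lemma (using that $\mathrm{Cl}(\pgot)\otimes\Cbb\simeq\End(\Scal_\pgot)$ since $\pgot$ is even-dimensional) then gives a canonical $\tilde K$-equivariant decomposition
$$
\Scal\vert_N \simeq \Scal_N \otimes \Scal_\pgot,
\qquad
\Scal_N := \hom_{\mathrm{Cl}(\pgot)}(\Scal_\pgot,\,\Scal\vert_N),
$$
in which $\Scal_N$ is a $\tilde K$-equivariant $\spinc$-bundle on $N$.

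For the determinant line bundle, apply the functor $\det(\cdot)=\hom_{\mathrm{Cl}}(\overline{\,\cdot\,},\,\cdot)$ to the factorization. Since $\Scal_\pgot$ is the pure spinor representation of $\Spin(\pgot)$, the line $\det(\Scal_\pgot)$ is the trivial $\tilde K$-module, so one obtains $\det(\Scal_N)\simeq\det(\Scal)\vert_N$ as $\tilde K$-equivariant line bundles. Inherit from $\det(\Scal)$ the connection $\nabla$ realizing the Kostant--Souriau relations for $(M,2\Omega,2\Phi_G)$. The curvature condition $\nabla^2=-2i\Omega$ restricts to $\nabla^2\vert_N=-2i\Omega_N$, since $\Omega_N=\Omega\vert_N$. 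For $X\in\kgot$ the vector field $X_M$ is tangent to $N$ and equals $X_N$ there, so the Kostant relation restricts to
$$
\Lcal(X)-\nabla_{X_N} = 2i\langle\Phi^N_K,X\rangle,
$$
using $\Phi_G\vert_N=\Phi^N_K$ under the inclusion $\kgot^*\hookrightarrow\ggot^*$. This shows $(N,\Omega_N,\Phi^N_K)$ is $\spinc$-prequantized by $\Scal_N$.

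The main obstacle will be tracking the $\tilde K$-equivariance of the factorization when $\tilde K\to K$ is a non-trivial double cover: a priori $\Scal\vert_N$ is only $K$-equivariant, whereas $\Scal_\pgot$ is genuinely $\tilde K$-equivariant. One must check that the non-trivial element of $\ker(\tilde K\to K)\subset\Spin(\pgot)$ acts by the same sign on both tensor factors, so that the product action descends to $K$ on $\Scal\vert_N$. This is automatic from the fact that the said kernel maps to the identity in $\mathrm{SO}(\pgot)$ and $\Scal\vert_N$ is an honest $K$-bundle, which forces $\Scal_N$ to be $\tilde K$-equivariant with the correct parity.
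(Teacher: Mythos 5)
Your proposal follows essentially the same route as the paper: split $\T M\vert_N\simeq \T N\oplus\pgot$, factor the restricted $\spinc$-bundle as $\Scal\vert_N\simeq\Scal_\pgot\otimes\Scal_N$ (the paper writes this as $\Scal\vert_N=\Scal_\pgot^{o(\pgot)}\boxtimes\Scal_N$ using the orientation $o(\pgot,N)$ to fix gradings), and use the triviality of $\det(\Scal_\pgot)$ as a $\tilde K$-module to get $\det(\Scal_N)=\det(\Scal)\vert_N$. You supply slightly more detail than the paper on the two points it leaves implicit — the explicit restriction of the Kostant–Souriau relations to $N$, and the parity check ensuring $\Scal_N$ is genuinely $\tilde K$-equivariant when $\tilde K\to K$ is a nontrivial double cover — but the argument is the same one.
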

\begin{proof}  By definition we have $\T M\vert_N=\pgot \oplus \T N$. The manifolds $M$ and $N$ are oriented by their symplectic forms. The vector space $\pgot$ inherits an orientation $o(\pgot,N)$ satisfying the relation
$o(M)=o(\pgot,N) o(N)$. The orientation $o(\pgot,N)$ can be computed also as follows: takes any $\xi\in 
{\rm Image}(\Phi_K^N)$, then $o(\pgot,N)=o(\xi)$ (see Example \ref{exam:orientation}).

Let $\Scal_\pgot$ be the spinor representation that we see as a $\tilde{K}$-module. The orientation $o(\pgot):=o(\pgot,N)$ determines a decomposition 
$\Scal_\pgot=\Scal_\pgot^{+,o(\pgot)}\oplus \Scal_\pgot^{-,o(\pgot)}$ and we denote
$$
\Scal_\pgot^{o(\pgot)}:= \Scal_\pgot^{+,o(\pgot)}\ominus \Scal_\pgot^{-,o(\pgot)}\in  R(\tilde{K}).
$$
Let $\Scal_N$ be the unique $\spinc$-bundle, $\tilde{K}$-equivariant on $N$ defined by the relation
\begin{equation}\label{eq:spin-divise}
\Scal\vert_N=\Scal_\pgot^{o(\pgot)}\boxtimes \Scal_N.
\end{equation}
Since $\det(\Scal_\pgot^{o(\pgot)})$ is trivial (as $\tilde{K}$-module), we have the relation 
$\det(\Scal_N)=\det(\Scal)\vert_N$ that implies the second point. $\Box$
\end{proof}

\medskip

For $\Ocal\in \wG_d$, we consider the symplectic reduced space
$$
M\slash\!\!\slash \Ocal:=\Phi_G^{-1}(\Ocal)/G.
$$
Notice that $M\slash\!\!\slash \Ocal=\emptyset$ when $\Ocal$ does not belongs to $\wG_d(\Ccal_M)$. Moreover 
the diffeomorphism $G\times_K N\simeq M$ shows that $M\slash\!\!\slash \Ocal$ is equal to the reduced space
$$
N\slash\!\!\slash \Ocal_K := (\Phi_K^N)^{-1}(\Ocal_K)/K.
$$
with $\Ocal_K=\Ocal\cap \kgot^*$. Here $N\slash\!\!\slash \Ocal_K$ should be understood as the symplectic 
reduction of the $\tK$-manifold $N$ relative to the $\tK$-admissible coadjoint orbit $\Ocal_K\in \what{\tilde{K}}$. 
Hence the quantization $\QS(N\slash\!\!\slash \Ocal_K)\in \Zbb$ of the reduced space $N\slash\!\!\slash \Ocal_K$ is well 
defined (see Proposition \ref{prop:QS-definition}).

\begin{defi}
For any $\Ocal\in \wG_d$, we take $\QS(M\slash\!\!\slash \Ocal):=\QS(N\slash\!\!\slash \Ocal_K)$. 
\end{defi}

The main tool to prove Theorem \ref{theo:main} is the comparison of the formal geometric quantization of three different geometric data: we work here in the setting where the $G$-action on $M$ has {\em abelian infinitesimal stabilizers}.
\medskip

\begin{enumerate}
\item The formal geometric quantization of the $G$-action on $(M,\Omega,\Phi_G, \Scal)$ is the
element $\QSf_G(M)\in \wR(G,d)$ defined by the relation
\begin{equation*}\label{eq:quantization-G-action-M}
\QS_G(M):=\sum_{\Ocal\in\wG}\QS(M\slash\!\!\slash \Ocal)\, \pi^G_\Ocal.
\end{equation*}
\item The formal geometric quantization of the $K$-action on $(M,\Omega,\Phi_K,\Scal)$ is the element 
$\QSf_K(M)\in \wR(K)$ (see Definition \ref{def:formal-quant-spin}). As the $K$-action on $M$ has {\em abelian infinitesimal stabilizers}, we have the decomposition
\begin{equation*}\label{eq:quantization-K-action-M}
\QSf_K(M)=\sum_{\Pcal\in\wK}\QS(M\slash\!\!\slash \Pcal)\, \pi^K_\Pcal.
\end{equation*}

\item The formal geometric quantization of the $\tilde{K}$-action on $(N,\Omega_N,\Phi_K^N,\Scal_N)$ is the element 
$\QSf_{\tilde{K}}(N)\in \wR(\tilde{K})$. As the $\tK$-action on $N$ has {\em abelian infinitesimal stabilizers}, we have the decomposition
\begin{equation*}\label{eq:quantization-K-tilde-action}
\QSf_{\tilde{K}}(N)=\sum_{\tilde{\Pcal}\in\what{\tilde{K}}}\QS(N\slash\!\!\slash \tilde{\Pcal})\, \pi^{\tilde{K}}_{\tilde{\Pcal}}.
\end{equation*}
\end{enumerate}

In the next section we explain the link between these three elements.

\subsection{$\Spinc$-quantization: main results}

Let $\Ccal_M\subset \tgot^*_+$ be the chamber containing $\Phi_G(M)\cap\tgot^*_+$. 

\begin{defi} We defines the orientation $o^+$ and $o^-$ on $\pgot$ as follows. Take $\lambda\in \Ccal_M$, then
$o^+:=o(\lambda)$ and $o^-:=o(-\lambda)$ (see Example \ref{exam:orientation}). 
\end{defi} 

We denote $\Scal_\pgot^{o^+},\Scal_\pgot^{o^-}$ the virtual representations of $\tilde{K}$ associated to the spinor 
representation of $\Spin(\pgot)$ and the orientations $o^+$ and $o^-$. We denote $\overline{\Scal_\pgot^{o^+}}$ the $\tilde{K}$-module with opposite complex structure. Remark that $\overline{\Scal_\pgot^{o^+}}\simeq \Scal_\pgot^{o^-}$.

Recall that the map $V\mapsto V\vert_K$ defines a morphism $\wR(G,d)\to \wR(K)$. We have also the morphism
${\bf r}^o=\wR(G,d)\to \wR(\tilde{K})$ defined by ${\bf r}^o(V)=V\vert_K\otimes\Scal_\pgot^o$.

We start with the following

\begin{theo}\label{theo:main-1}
 If the $G$-action on $M$ has {\em abelian infinitesimal stabilizers} then 
\begin{equation}\label{eq:main-1}
{\bf r}^o\left(\QSf_G(M)\right)\,=\,\epsilon^o_M\, \QSf_{\tilde{K}}(N).
\end{equation}
Here $\epsilon^o_M=\pm$ is equal to the ratio between $o$ and $o^-$.
\end{theo}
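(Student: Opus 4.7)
The strategy is to compare term-by-term the two formal sums
$$
{\bf r}^o\bigl(\QSf_G(M)\bigr)=\sum_{\Ocal\in\wG_d}\QS(M\slash\!\!\slash \Ocal)\,{\bf r}^o(\pi^G_\Ocal),\qquad
\QSf_{\tilde{K}}(N)=\sum_{\tilde{\Pcal}\in\what{\tilde{K}}}\QS(N\slash\!\!\slash \tilde{\Pcal})\,\pi^{\tilde{K}}_{\tilde{\Pcal}}.
$$
Three pieces of structure connect them. First, only $\Ocal\in\wG_d(\Ccal_M)$ contribute on the left, since $\Delta_G(M)\subset\Ccal_M$ (Proposition \ref{prop:G-convexity}) forces $M\slash\!\!\slash \Ocal=\emptyset$ otherwise. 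Second, Lemma \ref{lem:K-out}(1) gives an injection $\Ocal\mapsto\Ocal_K$ from $\wG_d$ into $\wK_{out}\subset\what{\tilde{K}}$. Third, the diffeomorphism $G\times_K N\simeq M$ from Proposition \ref{prop:proper-2} gives $M\slash\!\!\slash \Ocal=N\slash\!\!\slash \Ocal_K$. The theorem thus reduces to two claims: (a) every $\tilde{\Pcal}\in\what{\tilde{K}}$ with $\QS(N\slash\!\!\slash \tilde{\Pcal})\neq 0$ is of the form $\Ocal_K$ for some $\Ocal\in\wG_d(\Ccal_M)$, and (b) ${\bf r}^o(\pi^G_\Ocal)=\epsilon^o_M\,\pi^{\tilde{K}}_{\Ocal_K}$ uniformly over such $\Ocal$.

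For (a) I would first kill the contribution of $\tilde{\Pcal}\in\wK\subset\what{\tilde{K}}$ in the case where $\tilde{K}\to K$ is a non-trivial double cover (if $\tilde{K}=K$ there is nothing to prove here, since $\wK_{out}=\wK$). Let $\gamma\in\tilde{K}$ be the non-trivial element of $\ker(\tilde{K}\to K)$. Since $\Scal$ is $K$-equivariant, $\gamma$ acts trivially on $\Scal|_N$; since $\gamma$ maps to $-1\in\Spin(\pgot)$, it acts as $-1$ on $\Scal_\pgot^{o(\pgot)}$; so the decomposition $\Scal|_N=\Scal_\pgot^{o(\pgot)}\boxtimes\Scal_N$ forces $\gamma$ to act as $-1$ on $\Scal_N$. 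By a direct character computation, $\gamma$ acts on $\Scal_{\tilde{\Pcal}^-}$ as $+1$ when the weight $\mu-\rho_c$ of $\tilde{\Pcal}$ lies in $\Lambda$ (i.e.\ $\tilde{\Pcal}\in\wK$) and as $-1$ when $\mu-\rho_c\in\rho_n(\xi)+\Lambda\setminus\Lambda$ (i.e.\ $\tilde{\Pcal}\in\wK_{out}\setminus\wK$). Hence $[\Scal_N\boxtimes\Scal_{\tilde{\Pcal}^-}]^\gamma=0$ for $\tilde{\Pcal}\in\wK$, and Lemma \ref{lem:gamma-non-compact} yields $\QS(N\slash\!\!\slash \tilde{\Pcal})=0$ on that subset. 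Among the remaining $\tilde{\Pcal}\in\wK_{out}$, a non-zero $\QS(N\slash\!\!\slash \tilde{\Pcal})$ forces the parameter $\mu$ to lie near $\Delta_K(N)=\Delta_G(M)\subset\Ccal_M$; as $\Ccal_M$ is an open connected component of $(\tgot^*_+)_{se}$ and the perturbations in Proposition \ref{prop:QS-definition} are generic and small, $\mu\in\Ccal_M$. Taking $\xi=\mu$ in the defining condition $\mu-\rho_c\in\rho_n(\xi)+\Lambda$ of $\wK_{out}$ then gives $\mu-\rho(\mu)\in\Lambda$, so $\Ocal:=G\mu\in\wG_d(\Ccal_M)$ and $\tilde{\Pcal}=\Ocal_K$.

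Claim (b) is a direct application of Lemma \ref{lem:K-out}(2): the sign there is the ratio between $o$ and $o(-\lambda)$ for $\lambda=\Ocal\cap\tgot^*_+$, and for every $\Ocal\in\wG_d(\Ccal_M)$ one has $\lambda\in\Ccal_M$, hence $o(-\lambda)=o^-$ and the sign equals $\epsilon^o_M$. Combining (a), (b) with $M\slash\!\!\slash \Ocal=N\slash\!\!\slash \Ocal_K$ assembles into
$$
{\bf r}^o\bigl(\QSf_G(M)\bigr)=\sum_{\Ocal\in\wG_d(\Ccal_M)}\QS(M\slash\!\!\slash \Ocal)\,\epsilon^o_M\,\pi^{\tilde{K}}_{\Ocal_K}=\epsilon^o_M\,\QSf_{\tilde{K}}(N),
$$
which is the required identity. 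The main obstacle I expect is the $\gamma$-vanishing step: one must track precisely how the spin double cover $\tilde{K}\to K$ acts on each factor of $\Scal_N\boxtimes\Scal_{\tilde{\Pcal}^-}$ and invoke the non-compact shifting trick of Hochs-Song through Lemma \ref{lem:gamma-non-compact}. Once this is in place the remaining steps are essentially bookkeeping between the parametrizations $\wG_d$ and $\what{\tilde{K}}$.
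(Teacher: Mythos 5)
Your proof is correct and follows essentially the same strategy as the paper: kill the contributions of $\tilde{\Pcal}\in\what{\tilde{K}}\setminus\wK_{out}$ via the action of the central element $\gamma$ and Lemma \ref{lem:gamma-non-compact}, kill the contributions of $\tilde{\Pcal}\in\wK_{out}$ with parameter outside $\Ccal_M$ by emptiness of the reduced space, and match the remaining terms and the sign $\epsilon^o_M$ via Lemma \ref{lem:K-out}. Your write-up is a faithful (and somewhat more detailed) reconstruction of the paper's argument, including the character computation behind the $\gamma$-vanishing and the observation that $o(-\lambda)$ is constant on $\Ccal_M$.
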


\begin{proof} If the $G$-action on $M$ has abelian infinitesimal stabilizers, then the 
$\tilde{K}$-action on $N$ has also abelian infinitesimal stabilizers. It implies the following relation:
$$
\QSf_{\tilde{K}}(N)=\sum_{\tilde{\Pcal}\in\what{\tilde{K}}}\QS(N\slash\!\!\slash \tilde{\Pcal})\, \pi^{\tilde{K}}_{\tilde{\Pcal}}
\quad \in \wR(\tilde{K}).
$$
Following the first point of Lemma \ref{lem:K-out}, we consider the following subset 
$\Gamma:= \{\Ocal_K:=\Ocal\cap\kgot^*,\, \Ocal\in\wG_d\}\subset \wK_{out}\subset \what{\tK}$.

Thanks to the second point of Lemma \ref{lem:K-out} we have
\begin{eqnarray*}\label{eq:r-Q-G-M}
{\bf r}^o\left(\QSf_G(M)\right)&=& \sum_{\Ocal\in\wG_d}\QS(M\slash\!\!\slash \Ocal) \, \pi^G_\Ocal\vert_K\otimes \Scal_\pgot^o.\nonumber\\
&=& \epsilon_M^o \sum_{\Ocal\in\wG_d}\QS(N\slash\!\!\slash \Ocal_K) \, \pi^{\tilde{K}}_{\Ocal_K}\\
&=& \epsilon_M^o \sum_{\tilde{\Pcal}\in \Gamma}\QS(N\slash\!\!\slash \tilde{\Pcal}) \, \pi^{\tilde{K}}_{\tilde{\Pcal}} .
\end{eqnarray*}
Identity (\ref{eq:main-1}) is proved if we check that $\QS(N\slash\!\!\slash \tilde{\Pcal})=0$ for any 
$\tilde{\Pcal}\in\widehat{\tilde{K}}$ which does not belong to 
$\Gamma$.

Suppose first that $\tK\simeq K$. In this case we have 
$\widehat{\tilde{K}}=\wK_{out}=\wK$ and a coadjoint orbit $\tilde{P}=K\mu\in\wK$ does not belong to  $\Gamma$ if and only if $\mu$ is not contained in $\ggot^*_{se}$. But the image of $\Phi_G$ is contained in $\ggot^*_{se}$, so 
$N\slash\!\!\slash \tilde{\Pcal}=\emptyset$ and then $\QS(N\slash\!\!\slash \tilde{\Pcal})=0$ if $\tilde{P}\notin\Gamma$.

Suppose now that $\tilde{K}\to K$ is a two-fold cover and let us denote by $\{\pm 1_{\tilde{K}}\}$ the kernel 
of this morphism. Here $\gamma:= -1_{\tilde{K}}$ acts trivially on $N$ and (\ref{eq:spin-divise}) shows that $\gamma$ acts 
by multiplication by $-1$ on the fibers of the $\spinc$ bundle $\Scal_N$. 
The element $\gamma$ acts also trivially on the orbits $\tilde{\Pcal}\in \widehat{\tilde{K}}$:
\begin{itemize}
\item if $\tilde{P}\in\wK_{out}$, then $\gamma$ acts by multiplication by $-1$ on the fibers of the 
$\spinc$ bundle $\Scal_{\tilde{\Pcal}}$,
\item if $\tilde{P}\notin\wK_{out}$,  then $\gamma$ acts trivially on the fibers of the 
$\spinc$ bundle $\Scal_{\tilde{\Pcal}}$.
\end{itemize}

Our considerations show that $[\Scal_N\boxtimes \Scal_{\tilde{\Pcal}^-}]^\gamma=0$ when 
$\tilde{P}\in\widehat{\tilde{K}}\setminus \wK_{out}$. Thanks to Lemma \ref{lem:gamma-non-compact}, it implies 
the vanishing of $\QS(N\slash\!\!\slash \tilde{P})$ for any $\tilde{P}\in\widehat{\tilde{K}}\setminus \wK_{out}$.

Like in the previous case, when $\tilde{P}\in\wK_{out}\setminus \Gamma$, we have $\QS(N\slash\!\!\slash \tilde{P})=0$ because $N\slash\!\!\slash \tilde{\Pcal}=\emptyset$. $\Box$

\end{proof}

\medskip

We compare now the formal geometric quantizations of the $K$-manifolds $M$ and $N$.

\begin{theo}\label{theo:main-2}
We have the following relation
\begin{equation}\label{eq:main-2}
\QSf_K(M)\otimes \overline{\Scal_\pgot^{o^+}}\,=\,\QSf_{\tilde{K}}(N) \quad \in R(\tilde{K}).
\end{equation}
\end{theo}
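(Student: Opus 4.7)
The plan is to establish (\ref{eq:main-2}) by directly localizing the formal geometric quantization of $M$ along zeros of the Kirwan vector field: I will show that all zeros lie in $N$ and then compare local contributions via the spinor decomposition $\Scal\vert_N\cong\Scal_\pgot^{o^+}\boxtimes\Scal_N$. The lemma preceding the statement already shows $(N,\Omega_N,\Phi^N_K,\Scal_N)$ is a $\spinc$-prequantized Hamiltonian $\tilde{K}$-manifold; its moment map $\Phi^N_K$ is proper as the restriction of the proper $\Phi_K$, and its generic $\tilde{K}$-stabilizers are abelian, because the identification $M\cong G\times_K N$ gives $\mathrm{Stab}_G([e,n])=\mathrm{Stab}_K(n)$, so the standing hypothesis of abelian $G$-stabilizers on $M$ forces abelian $\tilde{K}$-stabilizers on $N$. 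Thus $\QSf_{\tilde{K}}(N)$ is well-defined.

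The first geometric step is to prove $Z_M\subset N$. Using the Cartan-decomposition diffeomorphism $\pgot\times N\to M$, $(X,n)\mapsto[\exp X\cdot n]$, a second-order expansion at $X=0$ yields
$$\|\Phi_K(X,n)\|^2=\|\Phi^N_K(n)\|^2+B\bigl([X,\Phi^N_K(n)],[X,\Phi^N_K(n)]\bigr)+O(X^4),$$
where $B$ is the Killing form (positive-definite on $\pgot$). Since $\Phi^N_K(n)\in\ggot^*_{se}$, the operator $\mathrm{ad}(\Phi^N_K(n))$ is invertible on $\pgot$, so this quadratic form in $X$ is positive-definite. Globally, on each $G$-orbit $G\cdot[e,n]\cong G/K_n$, a Mostow/Kempf--Ness-style argument shows that the restriction of $\|\Phi_K\|^2$ admits a unique critical $K$-orbit at $K\cdot[e,n]\subset N$: the function $X\mapsto\|\mathrm{pr}_\kgot(\mathrm{Ad}(\exp X)\Phi^N_K(n))\|^2$ on $\pgot$ is strictly convex and increasing along rays from the origin when $\Phi^N_K(n)$ is strongly elliptic. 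Combining these yields $Z_M\subset N$, hence $Z_\beta^M=Z_\beta^N$ for every $\beta\in\Bcal$.

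The decisive step is the local index identity: for each $\beta\in\Bcal$,
\begin{equation}\label{eq:local-step}
\QS_K(M,Z_\beta^M)\otimes\overline{\Scal_\pgot^{o^+}}=\QS_{\tilde{K}}(N,Z_\beta^N)\quad\text{in }\wR(\tilde{K}).
\end{equation}
In a $K$-invariant tubular neighbourhood of $Z_\beta^N\subset N\subset M$, the Cartan diffeomorphism identifies $M$ with $\pgot\times U$ for a neighbourhood $U$ of $Z_\beta^N$ in $N$; the $\spinc$-bundle restricts as $\Scal_\pgot^{o^+}\boxtimes(\Scal_N\vert_U)$; and a $K$-equivariant homotopy of transversally elliptic symbols deforms $\kappa_M$ into the direct sum of the linear vector field $X\mapsto-\mathrm{ad}(\beta)X$ on $\pgot$ (non-degenerate at $0$ since $\beta\in\ggot^*_{se}$) and $\kappa_N$ on $U$. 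The multiplicative property of the transversally elliptic index under exterior products then factors the localized index as $\QS_K(M,Z_\beta^M)=I_\pgot(\beta)\cdot\QS_{\tilde{K}}(N,Z_\beta^N)$ in $\wR(\tilde{K})$, where $I_\pgot(\beta)\in\wR(\tilde{K})$ is the localized index at $0\in\pgot$ of the shifted Dirac symbol with spinor $\Scal_\pgot^{o^+}$. A character computation (parallel to the coadjoint-orbit case recorded in Lemma \ref{lem:K-out}) identifies $I_\pgot(\beta)$ with the formal inverse $\bigl[\overline{\Scal_\pgot^{o^+}}\bigr]^{-1}\in\wR(\tilde{K})$: its character expands, in the chamber $\Ccal_M$, as the reciprocal of the spinor character of $\overline{\Scal_\pgot^{o^+}}$. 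Multiplying (\ref{eq:local-step}) through by $\overline{\Scal_\pgot^{o^+}}$ clears this factor; summing over $\beta\in\Bcal$ then yields (\ref{eq:main-2}).

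The main obstacle is this last step: establishing both the $K$-equivariant homotopy splitting $\kappa_M\sim(-\mathrm{ad}(\beta)X)\oplus\kappa_N$ in the class of transversally elliptic symbols, and the identification of $I_\pgot(\beta)$ with $[\overline{\Scal_\pgot^{o^+}}]^{-1}$. Both rely on delicate index-theoretic arguments in the framework of \cite{Atiyah74}: a Witten-type deformation for the Kirwan shift, and a fixed-point localization of the $\tilde{K}$-equivariant index at the non-compact origin $0\in\pgot$. It can be viewed as a parametrized version of the Blattner formula that underlies Lemma \ref{lem:K-out}.
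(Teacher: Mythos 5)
Your overall strategy — localize at the zeros of the Kirwan vector field, show they sit inside $N$, and exploit the spinor decomposition $\Scal\vert_N\simeq\Scal_\pgot^{o^+}\boxtimes\Scal_N$ together with a multiplicativity argument — is indeed the paper's strategy, and the first two steps of your write-up are essentially right (the containment of the critical set in $N$ is Lemma \ref{lem:Z-N-M}, quoted from \cite{pep-jems}, so the second-order expansion you sketch is not needed). But the last step, which you yourself flag as the main obstacle, contains a genuine gap, and the paper closes it by reversing your order of operations.

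You first factor the localized index as $\QS_K(M,Z^M_\beta)=I_\pgot(\beta)\cdot\QS_{\tK}(N,Z^N_\beta)$ and then tensor by $\overline{\Scal_\pgot^{o^+}}$ to cancel the factor $I_\pgot(\beta)=\bigl[\overline{\Scal_\pgot^{o^+}}\bigr]^{-1}$. Both halves of this are problematic. The multiplicative property of \cite{Atiyah74} for a \emph{diagonal} group action applies to the exterior product of a transversally elliptic symbol with an \emph{honestly elliptic} one; but the pushed Dirac symbol $\cl_\pgot\bigl(X+[\beta,A]\bigr)$ on $\pgot$ is only transversally elliptic — its characteristic set $\{X=-[\beta,A]\}$ is noncompact — so the factorization you assert is not covered by the multiplicative property as stated, and would require controlling the wavefront sets of both distributional characters to make the product meaningful. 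Moreover $\wR(\tK)$ is a group, not a ring, and even viewed as an $R(\tK)$-module the ``formal inverse'' $\bigl[\overline{\Scal_\pgot^{o^+}}\bigr]^{-1}$ is not uniquely determined: already for $\tK=S^1$ the element $1-t$ admits the two distinct formal inverses $\sum_{k\ge 0}t^{k}$ and $-\sum_{k\ge 1}t^{-k}$. Cancelling $I_\pgot(\beta)$ by $\overline{\Scal_\pgot^{o^+}}$ therefore invokes an associativity among products that have not been shown to exist.

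The paper sidesteps both difficulties by twisting \emph{before} factoring. Starting from the transversally elliptic symbol $\sigma^2$ computing $\QS_K(\pgot\times N,\{0\}\times Z_\beta)$, one forms $\sigma^2\otimes\overline{\Scal_\pgot^{o^+}}$ at the level of symbols. Under the Clifford-module isomorphism $\Scal_\pgot^{o^+}\otimes\overline{\Scal_\pgot^{o^+}}\simeq\bigwedge_\Cbb\pgot_\Cbb$, a homotopy brings the $\pgot$-part to the Bott symbol $\cl_{\pgot_\Cbb}(X+iA)$, which is \emph{honestly} elliptic with index the trivial representation. At this point the hypotheses of Atiyah's multiplicative property are satisfied (one factor is elliptic), and one reads off directly $\indice^{\pgot\times U_\beta}_{\tK}(\sigma^3)=\indice^{\pgot}_{\tK}\bigl(\cl_{\pgot_\Cbb}(X+iA)\bigr)\cdot\indice^{U_\beta}_{\tK}(\sigma^1)=\QS_{\tK}(N,Z_\beta)$, with no inversion and no ill-defined product of infinite sums. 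Supplying this twist-first, homotope-to-Bott, then-multiply argument is precisely what is needed to make your sketch into a proof.
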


\medskip

When $M=\Ocal\in\wG_d$ the manifold $N$ is equal to $\Ocal_K:=\Ocal\cap\kgot^*$. We have 
$\QSf_{\tilde{K}}(N)= \pi^{\tilde{K}}_{\Ocal_K}$ and  we know also that $\QSf_K(\Ocal)=\pi_\Ocal^G\vert_K$
(see Proposition \ref{prop:Q-K-O}). Here (\ref{eq:main-2}) becomes
\begin{equation}\label{eq:pi-O-K-times-S}
\pi_\Ocal^G\vert_K\otimes \Scal_\pgot^o =\pm \pi^{\tilde{K}}_{\Ocal_K}
\end{equation}
where the sign $\pm$ is the ratio between 
the orientations $o$ and $o^-$ of the vector space $\pgot$.

\medskip

If we use Theorems \ref{theo:main-1} and \ref{theo:main-2} we get the following 

\begin{coro}\label{coro:equality-Q-G-K-M}
If the $G$-action on $M$ has {\em abelian infinitesimal stabilizers}, we have 
${\bf r}^o\left(\QSf_G(M)\right)=\QSf_K(M)\otimes \Scal_\pgot^o$.
\end{coro}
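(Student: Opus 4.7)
The corollary is essentially a direct algebraic consequence of the two theorems preceding it, so my plan is to simply chain them together and track the signs coming from the various orientations on $\pgot$.

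First I would apply Theorem \ref{theo:main-1} to rewrite the left-hand side as $\epsilon^o_M\,\QSf_{\tilde K}(N)$, where $\epsilon^o_M$ is the ratio between the chosen orientation $o$ and the orientation $o^-$ associated to $-\lambda$ for $\lambda\in\Ccal_M$. Next I would use Theorem \ref{theo:main-2}, which identifies $\QSf_{\tilde K}(N)$ with $\QSf_K(M)\otimes\overline{\Scal_\pgot^{o^+}}$. Substituting gives
\[
{\bf r}^o\!\left(\QSf_G(M)\right)\,=\,\epsilon^o_M\,\QSf_K(M)\otimes\overline{\Scal_\pgot^{o^+}}.
\]

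The remaining step is the identification of the spinor factor. The paper already notes that $\overline{\Scal_\pgot^{o^+}}\simeq\Scal_\pgot^{o^-}$ (passing to the opposite complex structure corresponds to reversing the orientation induced by $\lambda\mapsto-\lambda$). Combining this with the general fact recorded in the remark following the definition of $\Scal_\pgot^o$, namely $\Scal_\pgot^{o}=\pm\Scal_\pgot^{o'}$ with sign equal to the ratio of the two orientations, I obtain
\[
\epsilon^o_M\,\overline{\Scal_\pgot^{o^+}}\,=\,\epsilon^o_M\,\Scal_\pgot^{o^-}\,=\,\Scal_\pgot^{o},
\]
since by definition $\epsilon^o_M$ is precisely the ratio between $o$ and $o^-$. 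Plugging this back yields the claimed equality ${\bf r}^o(\QSf_G(M))=\QSf_K(M)\otimes\Scal_\pgot^o$.

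There is really no obstacle beyond bookkeeping: the entire content of the corollary lies in Theorems \ref{theo:main-1} and \ref{theo:main-2}, whose proofs have already been given. The only point that requires a moment of care is the sign reconciliation, i.e.\ checking that the $\epsilon^o_M$ produced by Theorem \ref{theo:main-1} cancels against the $\overline{\Scal_\pgot^{o^+}}\to\Scal_\pgot^{o^-}\to\Scal_\pgot^o$ conversion, which is exactly what the definition of $\epsilon^o_M$ guarantees.
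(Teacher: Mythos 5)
Your proof is correct and takes exactly the approach the paper intends: the paper simply states that the corollary follows from Theorems \ref{theo:main-1} and \ref{theo:main-2}, and your chaining of the two identities together with the orientation bookkeeping ($\epsilon_M^o\,\overline{\Scal_\pgot^{o^+}}=\epsilon_M^o\,\Scal_\pgot^{o^-}=\Scal_\pgot^o$) supplies the sign reconciliation the paper leaves implicit.
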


\medskip

The following conjecture says that the functorial property of $\QSf$ relative to restrictions (see Theorem \ref{theo:pep-formal-3})
should also holds for non-compact groups.

\begin{conj}
If the $G$-action on $M$ has {\em abelian infinitesimal stabilizers} then the following relation 
$$
\QSf_G(M)\vert_K=\QSf_K(M)
$$
holds in $\wR(K)$.
\end{conj}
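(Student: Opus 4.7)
The plan is to deduce the conjecture directly from Corollary \ref{coro:equality-Q-G-K-M}. Under the abelian infinitesimal stabilizer hypothesis, that corollary together with the definition of ${\bf r}^o$ already gives
\[
\QSf_G(M)\vert_K\otimes\Scal_\pgot^o \;=\; \QSf_K(M)\otimes\Scal_\pgot^o \qquad \text{in }\wR(\tilde{K}),
\]
so the conjecture reduces to the injectivity of the map $A\mapsto A\otimes\Scal_\pgot^o$ from $\wR(K)$ into $\wR(\tilde{K})$.

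To establish this injectivity, I would pass to formal $\tilde{T}$-characters. For $A=\sum_\Pcal a_\Pcal\pi^K_\Pcal\in\wR(K)$, Weyl's character formula gives $\chi_A\cdot\prod_{\alpha\in\Rgot_c^+}(e^{\alpha/2}-e^{-\alpha/2})=\sum_\Pcal a_\Pcal\sum_{w\in W_K}\epsilon(w)\,e^{w\mu_\Pcal}$, where $\mu_\Pcal$ denotes the Kirillov parameter of $\Pcal=K\mu_\Pcal$, while $\chi_{\Scal_\pgot^o}$ equals (up to sign) the non-compact Weyl denominator $\prod_{\alpha\in\Rgot_n^+}(e^{\alpha/2}-e^{-\alpha/2})$. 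Vanishing of $A\otimes\Scal_\pgot^o$, after multiplying by the compact denominator and using that the non-compact denominator is not a zero-divisor in the integral domain $\Zbb[\tilde\Lambda]$, becomes
\[
\sum_\Pcal a_\Pcal\sum_{w\in W_K}\epsilon(w)\,e^{w\mu_\Pcal}=0.
\]
Extracting the coefficient of $e^\lambda$ for each regular weight $\lambda\in\tilde\Lambda$ then picks out a single $\Pcal$ (the one whose dominant parameter lies in the $W_K$-orbit of $\lambda$) and forces $a_\Pcal=0$ for every $\Pcal$.

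The hard part is making these manipulations rigorous when $A$ has infinite support: the identities above live in a completion of $\Zbb[\tilde\Lambda]$, and one must justify that cancellation by the non-compact Weyl denominator survives in that completion. The favourable structural fact is local finiteness --- for each $\tilde{T}$-weight $\lambda$, only finitely many $\Pcal$ can contribute to the coefficient of $e^\lambda$ in $A\otimes\Scal_\pgot^o$ (namely the $\Pcal$'s with $\mu_\Pcal$ differing from a $W_K$-translate of $\lambda$ by a weight of $\Scal_\pgot^o$), so coefficient extraction is well defined weight-by-weight. A geometric alternative that avoids character algebra altogether would be to prove the equivalent multiplicity identity
$
\QS(M\slash\!\!\slash\Pcal)=\sum_{\Ocal\in\wG_d}\QS(M\slash\!\!\slash\Ocal)\,[\pi^K_\Pcal:\pi^G_\Ocal\vert_K]
$
directly, by combining the slice diffeomorphism $M\simeq G\times_K N$ with Theorem \ref{theo:hochs-song} applied to both $(M,K)$ and $(N,\tilde{K})$; this would require establishing a nontrivial compatibility between $K$-reductions of $M$ and $\tilde{K}$-reductions of $N$ at possibly singular orbits.
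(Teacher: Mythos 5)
The statement you are trying to prove is stated in the paper as a conjecture and is \emph{not} proved there, so there is no paper argument to compare against; what you have written must stand on its own, and it does not.

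Your reduction is fine: Corollary \ref{coro:equality-Q-G-K-M} together with the definition of ${\bf r}^o$ gives
$\QSf_G(M)\vert_K\otimes\Scal_\pgot^o=\QSf_K(M)\otimes\Scal_\pgot^o$ in $\wR(\tilde K)$, and the conjecture would follow if the map $A\mapsto A\otimes\Scal_\pgot^o$ from $\wR(K)$ to $\wR(\tilde K)$ were injective. The gap is precisely that this map is \emph{not} injective on $\wR(K)$. Proposition \ref{prop:restriction} only asserts injectivity of ${\bf r}^o$ on the much smaller group $\wR(G,d)$, and the mechanism behind it --- that $\pi^G_\Ocal\vert_K\otimes\Scal_\pgot^o=\pm\,\pi^{\tilde K}_{\Ocal_K}$ carries the basis of $\wR(G,d)$ to (signed) distinct basis elements of $\wR(\tilde K)$ --- has no analogue on $\wR(K)$, where $\pi^K_\Pcal\otimes\Scal_\pgot^o$ is a genuine alternating sum.

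The character computation you sketch does not rescue this. The assertion that the non-compact Weyl denominator $\prod_{\alpha\in\Rgot_n^+}(e^{\alpha/2}-e^{-\alpha/2})$ is not a zero divisor is true in $\Zbb[\tilde\Lambda]$, but the formal $\tilde T$-character of a general element of $\wR(K)$ does not live in $\Zbb[\tilde\Lambda]$; it lives (when it is defined at all) in a completion, and there the denominator \emph{does} annihilate nonzero elements. The simplest instance already kills the argument: take $G=SL(2,\Rbb)$, $K=T\simeq SO(2)$, $\pgot\simeq\Rbb^2$ with $\Scal_\pgot^o=\Cbb_{\alpha/2}\ominus\Cbb_{-\alpha/2}$. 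For $A=\sum_{n\in\Zbb}a_n\Cbb_n\in\wR(T)$, the element $A\otimes\Scal_\pgot^o$ vanishes iff $a_{n}=a_{n+\alpha}$ for all $n$, so periodic sequences such as $A=\sum_{n\equiv 0\ (\alpha)}\Cbb_n$ lie in the kernel. Your ``local finiteness'' remark is about the well-definedness of coefficient extraction in $\wR(\tilde K)$, which is correct, but it is unrelated to the non-zero-divisor property you need; local finiteness holds in the $SL(2,\Rbb)$ example too, yet the kernel is non-trivial.

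Thus the proof does not go through as written. To have a chance, you would need to exploit additional structure of the specific element $B:=\QSf_G(M)\vert_K-\QSf_K(M)$ --- for example a one-sided support/growth condition of the type used in the proof of Proposition \ref{prop:restriction} (the Blattner bound $c^K_\Pcal\ge c^G_\Ocal$) to show that a periodic/oscillating kernel element cannot occur --- rather than appealing to an injectivity statement that is false in general. Your alternative ``geometric'' suggestion at the end is the more promising route, but as stated it is only a programme, not an argument; the nontrivial compatibility between $K$-reductions of $M$ and $\tilde K$-reductions of $N$ at singular values of the moment map is exactly the content one would have to establish, and you do not establish it.
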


\medskip

The remaining part of this section is devoted to the proof of Theorem \ref{theo:main-2}.

\medskip

We work with the manifold $M:=G\times_K N$. We denote $\Phi_K^N:N\to\kgot^*$ the restriction of $\Phi_G:M\to\ggot^*$ to the submanifold $N$. We will use the $K$-equivariant isomorphism $\pgot\times N\simeq M$ defined by 
$(X,n)\mapsto [e^X,n]$.

The maps $\Phi_G,\Phi_K,\Phi_K^N$ are related through the relations $\Phi_G(X,n)= e^X\cdot \Phi_K^N(n)$ and\footnote{$\p_{\kgot,\ggot}:\ggot^*\to\kgot^*$ is the canonical projection.} $\Phi_K(X,n)= \p_{\kgot,\ggot}(e^X\cdot \Phi_K^N(n))$.

We consider the Kirwan vector fields on $N$ and $M$
$$
\kappa_N(n)=-\Phi_K^N(n)\cdot n\quad,\quad \kappa_M(m)=-\Phi_K(m)\cdot m.
$$

The following result is proved in \cite{pep-jems}[Section 2.2].

\begin{lem}\label{lem:Z-N-M}
An element $(X,n)\in \pgot\times N $ belongs to $Z_M:=\{\kappa_M=0\}$ if and only if $X=0$ 
and $n\in Z_N:=\{\kappa_N=0\}$.
\end{lem}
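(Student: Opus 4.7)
The plan is to translate the equation $\kappa_M(X,n)=0$ into algebraic conditions in the coordinates $(X,n)\in\pgot\times N$, first derive the commutation $[X,\xi]=0$ with $\xi:=\Phi_K^N(n)$, and then invoke the strong ellipticity of $\xi$ provided by Proposition \ref{prop:proper-2} to force $X=0$.

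First I would compute the $K$-action on $\pgot\times N$ explicitly. Via the identification $(X,n)\leftrightarrow[e^X,n]$ and the rewriting $ke^X=e^{\Ad(k)X}k$, one obtains $k\cdot(X,n)=(\Ad(k)X,k\cdot n)$, hence $\eta_M(X,n)=([X,\eta],\eta_N(n))\in\pgot\oplus T_nN$ for $\eta\in\kgot$. On the moment-map side, $\Phi_G(X,n)=\Ad(e^X)\xi=e^{{\rm ad}X}\xi$; since ${\rm ad}X$ exchanges $\kgot$ and $\pgot$ while $\xi\in\kgot$, projecting onto $\kgot$ yields $\Phi_K(X,n)=\cosh({\rm ad}X)\,\xi$. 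Consequently $\kappa_M(X,n)=0$ decomposes into the two conditions
\begin{enumerate}
\item[(a)] $[\cosh({\rm ad}X)\,\xi,\,X]=0$,
\item[(b)] $\big(\cosh({\rm ad}X)\,\xi\big)_N(n)=0.$
\end{enumerate}
The ``if'' direction is then immediate: at $X=0$ one has $\Phi_K(0,n)=\xi$, and both (a) and (b) hold trivially whenever $n\in Z_N$.

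For the ``only if'' direction, I would first reduce (a) to $[X,\xi]=0$. The operator $({\rm ad}X)^2$ is self-adjoint and non-negative on $\kgot$ with respect to the Cartan-involution-twisted Killing form, hence diagonalizable with eigenvalues $\lambda\geq 0$. Since $\cosh({\rm ad}X)$ acts on the $\lambda$-eigenspace as multiplication by the positive scalar $\cosh(\sqrt{\lambda})$, it preserves each eigenspace. Therefore $\cosh({\rm ad}X)\,\xi$ lies in $\ker({\rm ad}X|_{\kgot})$ if and only if $\xi$ does. Condition (a) is exactly $\cosh({\rm ad}X)\,\xi\in\ker({\rm ad}X|_{\kgot})$, so it yields $[X,\xi]=0$. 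Then $\Phi_K(X,n)=\xi$ and (b) becomes $\xi_N(n)=0$, i.e.\ $n\in Z_N$.

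The main obstacle is the concluding step $X=0$. Here I would invoke Proposition \ref{prop:proper-2}(3): since $\xi=\Phi_G(n)\in\ggot^*_{se}$, the stabilizer $G_\xi$ is compact, and its Lie algebra $\ggot_\xi$ is a compact subalgebra of $\ggot$. The Killing form $B$ is therefore negative definite on $\ggot_\xi$, whereas it is positive definite on $\pgot$, so $\ggot_\xi\cap\pgot=\{0\}$. Since $[X,\xi]=0$ with $X\in\pgot$ forces $X\in\ggot_\xi\cap\pgot$, we conclude $X=0$.
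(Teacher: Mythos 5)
The paper does not give its own proof of this lemma; it refers to \cite{pep-jems}[Section~2.2]. Your reconstruction is correct and, as far as one can tell from the paper itself, follows the same route: it relies on exactly the two ingredients the paper uses implicitly elsewhere, namely the formula $\Phi_K(X,n)=\cosh(\operatorname{ad}X)\,\Phi_K^N(n)$ (recorded in the footnote preceding the lemma) and the fact that an $X\in\pgot$ commuting with a strongly elliptic $\xi\in\kgot$ must vanish (the paper itself invokes this as ``$[\Phi_K(0,n),A]=0$ only if $A=0$'' in the homotopy argument immediately after the lemma). Your spectral reduction of condition (a) to $[X,\xi]=0$ via the self-adjointness of $(\operatorname{ad}X)^2\vert_\kgot$ and the positivity of $\cosh$ on its spectrum is correct, and your appeal to Proposition~\ref{prop:proper-2}(3) to guarantee $\xi\in\ggot^*_{se}$ is the right source for the final step.

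Two small remarks. First, in the concluding paragraph the clause ``$B$ is therefore negative definite on $\ggot_\xi$'' deserves one more line: the point is that $G_\xi$ is compact, so every $\operatorname{ad}Y$ with $Y\in\ggot_\xi$ is skew relative to a $G_\xi$-invariant inner product on $\ggot$, whence $B(Y,Y)=\operatorname{Tr}(\operatorname{ad}Y)^2\le 0$; negative \emph{semi-}definiteness already suffices, since it is incompatible with $B\vert_\pgot>0$ unless $\ggot_\xi\cap\pgot=0$. Second, logically you should extract $X=0$ from condition (a) alone (as you do), and only then read condition (b) at $X=0$ to get $n\in Z_N$; your write-up has the two conclusions slightly interleaved, but the argument is not circular. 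Overall the proof is sound and complete.
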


Let us recall how are defined the characters $\QSf_K(M)$ and $\QSf_{\tilde{K}}(N)$. We start with the decomposition 
$Z_N=\coprod_{\beta\in\Bcal} Z_\beta$ where $Z_\beta=K(N^\beta\cap(\Phi_K^N)^{-1}(\beta))$, and $\Bcal=\Phi_K^N(Z_N)\cap \tgot^*_+$.  
Thanks to Lemma \ref{lem:Z-N-M} the corresponding decomposition on $M$ is 
$Z_M:= \coprod_{\beta\in\Bcal} \{0\}\times Z_\beta$.

By definiton we have 
$$
\QSf_K(N):=\sum_{\beta\in\Bcal}\QS_K(N,Z_\beta)\ \in\ \wR(\tK)
$$
and $\QSf_K(M)=\QSf_K(\pgot\times N):=\sum_{\beta\in\Bcal}\QS_K(\pgot\times N,\{0\}\times Z_\beta)\ \in\ \wR(K)$. 
The proof of Theorem \ref{theo:main-2} is completed if we show that for any $\beta\in\Bcal$ we have 
\begin{equation}\label{eq:beta}
\QS_K(\pgot\times N,\{0\}\times Z_\beta)\otimes \overline{\Scal_\pgot^{o^+}}\,=\,\QS_{\tilde{K}}(N, Z_\beta) \quad \in R(\tilde{K}).
\end{equation}

Let $\Scal$ be the $G$-equivariant $\spinc$-bundle on $M$. The $K$-equivariant diffeomorphism $M\simeq \pgot\times N$ induces a 
$\tK$-equivariant isomorphism at the level of $\spinc$ bundles:
$$
\Scal\simeq\Scal_\pgot^{o^+}\otimes \Scal_N.
$$

We denote $\cl_\pgot :\pgot\to\End(\Scal_\pgot)$ the Clifford action associated to the Clifford module $\Scal_\pgot$. 
Any $X\in \pgot$ determines an odd linear map $\cl_\pgot(X): \Scal_\pgot\to \Scal_\pgot$.

For $n\in N$, we denote $\cl_n :\T_n N\to\End(\Scal_N\vert_n)$ the Clifford action associated to the $\spinc$ bundle $\Scal_N$. Any 
$v\in \T_n N$ determines an odd linear map $\cl_n(v): \Scal_N\vert_n\to \Scal_N\vert_n$.

\begin{lem}
Let $U_\beta\subset N$ be a small invariant neighborhood of $Z_\beta$ such that $Z_N\cap\overline{U_\beta}=Z_\beta$. 

$\bullet$ The character $\QS_{\tK}(N, Z_\beta)$ is equal to the index of the $\tK$-transversally elliptic symbol
$$
\sigma^1_{n}(v): \Scal_N^+\vert_n\longrightarrow \Scal_N^-\vert_n, \quad v\in\T_n U_\beta
$$
defined by $\sigma^1_{n}(v)=\cl_n(v+\Phi_K^N(n)\cdot n)$.

$\bullet$ The character $\QS_K(\pgot\times N, \{0\}\times Z_\beta)$ is equal to the index of the $K$-transversally elliptic symbol
$$
\sigma^2_{(A,n)}(X,v): 
(\Scal_\pgot^{o^+}\otimes\Scal_N\vert_n)^+\longrightarrow (\Scal_\pgot^{o^+}\otimes \Scal_N\vert_n)^-
$$
defined by $\sigma^2_{(A,n)}(X,v)=\cl_\pgot(X+[\Phi_K^N(n),A])\otimes \cl_n(v+\Phi_K^N(n)\cdot n)$
for $(X,v)\in\T_{(A,n)}(\pgot\times U_\beta)$. 

\end{lem}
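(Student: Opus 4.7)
The plan is to unpack both indices in the framework of Definitions \ref{def:indice-localise} and \ref{def:pushed-sigma}, and then identify the resulting defining symbols with $\sigma^1$ and $\sigma^2$, possibly after a $K$-equivariant homotopy through transversally elliptic symbols on the prescribed neighbourhoods.

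For the first point the identification is essentially tautological. The principal symbol of the $\spinc$-Dirac operator on $N$ attached to $\Scal_N$ is $\sigma(N,\Scal_N)(n,\nu)=\cl_n(\tilde\nu)$, while the Kirwan vector field is $\kappa_N(n)=-\Phi_K^N(n)\cdot n$ by Definition \ref{defi:kir}. Definition \ref{def:pushed-sigma} therefore produces the shifted symbol $\cl_n(\tilde\nu+\Phi_K^N(n)\cdot n)$, which is precisely $\sigma^1$. Lemma \ref{lem:Z-N-M} and the choice of $U_\beta$ guarantee that the characteristic set of $\sigma^1|_{U_\beta}$ is the compact set $Z_\beta$, so its $\tilde K$-equivariant index is exactly $\QS_{\tilde K}(N,Z_\beta)$ by Definition \ref{def:indice-localise}.

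For the second point I would use the $K$-equivariant diffeomorphism $\pgot\times N\simeq M$ to transport all data. Under this identification $\Scal$ corresponds to $\Scal_\pgot^{o^+}\boxtimes\Scal_N$ as in (\ref{eq:spin-divise}), and the Clifford action at $(A,n)$ on a tangent vector $(X,v)\in\pgot\oplus T_nN$ yields the external product $\cl_\pgot(X)\otimes\cl_n(v)$ appearing in $\sigma^2$. Now the transported $K$-moment map reads $\Phi_K(A,n)=\p_{\kgot,\ggot}(e^A\cdot\Phi_K^N(n))$, so the Kirwan vector field $\kappa_M$ is not obviously of product form in $A$. However, expanding in $A$ and using that $\p_{\kgot,\ggot}$ kills the $\pgot^*$-component of $[A,\Phi_K^N(n)]$ coming from the first-order term of $e^A\cdot\Phi_K^N(n)$, one finds that the linearization of $\kappa_M$ at $A=0$ equals $(-[\Phi_K^N(n),A],\kappa_N(n))$. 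The plan is then to interpolate linearly between $\kappa_M$ and this linearized field, producing a one-parameter family of $K$-transversally elliptic symbols with constant equivariant index, whose endpoint is $\sigma^2$.

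The main obstacle is ensuring that the characteristic set of each intermediate symbol stays compact inside $\pgot\times U_\beta$, so that the index is well defined throughout the homotopy and the deformation invariance applies. Concretely, this amounts to checking that no spurious zeros of the interpolating vector field appear in $\pgot\times U_\beta$ away from $\{0\}\times Z_\beta$, which rests on Lemma \ref{lem:Z-N-M} together with the local Morse-theoretic structure of $\Phi_K^N$ along $Z_\beta$ in the spirit of the computations carried out in \cite{pep-jems}.
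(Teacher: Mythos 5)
Your proof of the first point matches the paper's (it is indeed a tautology). For the second point your overall strategy is also the paper's: transport the shifted symbol to $\pgot\times U_\beta$, compute the linearization of $\kappa_M$ in $A$ at $A=0$ (your observation that $\p_{\kgot,\ggot}$ kills $[A,\Phi_K^N(n)]\in\pgot^*$ is exactly the point), and deform to $\sigma^2$ through transversally elliptic symbols.

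The gap is in the choice of homotopy and the resulting compactness check, which you correctly flag as the main obstacle but do not resolve. For a convex combination $(1-s)\kappa_M+s\kappa^{\mathrm{lin}}$ it is not clear that no zeros appear in $\pgot\times U_\beta$ away from $\{0\}\times Z_\beta$, and an appeal to ``local Morse-theoretic structure'' does not settle this. The paper sidesteps the difficulty by a better homotopy: replace $\Phi_K(A,n)$ by $\Phi_K(tA,n)$ inside the shifted symbol, giving
$$
\tau^t_{(A,n)}(X,v)=\clif_\pgot\bigl(X+[\Phi_K(tA,n),A]\bigr)\otimes \cl_n\bigl(v+\Phi_K(tA,n)\cdot n\bigr),\qquad 0\le t\le 1.
$$
For $t>0$ the zero set of the associated Kirwan-type field is $\{(A,n):\kappa_M(tA,n)=0\}$ (rescale the $\pgot$-component by $1/t$), which is $\{0\}\times Z_N$ by Lemma \ref{lem:Z-N-M}; for $t=0$ one uses that $\Phi_K^N(n)$ is strongly elliptic, so $[\Phi_K^N(n),A]=0$ forces $A=0$. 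Hence the characteristic set inside $\pgot\times\overline{U_\beta}$ equals $\{0\}\times Z_\beta$ for every $t$ and compactness is automatic along the whole deformation. Replacing your linear interpolation by this reparametrized one closes the gap without any further analysis.
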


\begin{proof} The first point corresponds to the definition of the character $\QS_{\tK}(N, Z_\beta)$.

By definition, $\QS_K(\pgot\times N, \{0\}\times Z_\beta)$ is equal to the index of the $K$-transversally elliptic symbol
$$
\tau_{(A,n)}(X,v)=\cl_\pgot(X+[\Phi_K(X,n),A])\otimes \cl_n(v+\Phi_K(X,n)\cdot n).
$$ 
It is  not difficult to see that 
$$
\tau^t_{(A,n)}(X,v)=\cl_\pgot(X+[\Phi_K(tX,n),A])\otimes \cl_n(v+\Phi_K(tX,n)\cdot n), \qquad 0\leq t\leq  1,
$$ 
defines an homotopy of transversally elliptic symbols between $\sigma^2=\tau^0$ and $\tau=\tau^1$: like in Lemma \ref{lem:Z-N-M}, we use the fact that 
$[\Phi_K(0,n),A]=0$ only if $A=0$.  It proves the second point. $\Box$
\end{proof}

\medskip

We can now finish the proof of (\ref{eq:beta}). We use here the following isomorphism of 
Clifford modules for the vector space $\pgot\times\pgot$~:
$$
\Scal_\pgot^{o^+}\otimes \overline{\Scal_\pgot^{o^+}}\simeq \bigwedge_\Cbb \pgot_\Cbb,
$$
where the Clifford action $(X,Y)\in \pgot\times\pgot$ on the left is $\cl_\pgot(X)\otimes \cl_\pgot(Y)$ and on the right is 
$\cl_{\pgot_\Cbb}(X+i Y)$.

The product $\sigma^2\otimes \overline{\Scal_\pgot^{o^+}}$ corresponds to the symbol
$$
\cl_\pgot(X+[\Phi_K(X,n),A])\otimes \cl_\pgot(0)\otimes \cl_n(v+\Phi_K^N(n)\cdot n)
$$
which is homotopic to 
$$
\cl_\pgot(X+[\Phi_K(X,n),A])\otimes \cl_\pgot(A)\otimes \cl_n(v+\Phi_K^N(n)\cdot n),
$$
and is also homotopic to 
$$
\sigma^3:=\cl_\pgot(X)\otimes \cl_\pgot(A)\otimes \cl_n(v+\Phi_K^N(n)\cdot n).
$$
We have then proved that the $K$-equivariant index of $\sigma^2$ times $\overline{\Scal_\pgot^{o^+}}\in R(\tK)$ is equal to 
the $\tK$-equivariant index of $\sigma^3$ (that we denote $\indice_{\tK}^{\pgot\times U_\beta}(\sigma^3)$). 
The multiplicative property of the equivariant index \cite{Atiyah74} tells us that 
$$
\indice_{\tK}^{\pgot\times U_\beta}(\sigma^3)=\indice_{\tK}^{\pgot}(\cl_{\pgot_\Cbb}(X+iA))\cdot \indice_{\tK}^{U_\beta}(\sigma^1).
$$
But $\cl_{\pgot_\Cbb}(X+iA):\bigwedge{}^+_\Cbb \pgot_\Cbb \to \bigwedge{}^-_\Cbb \pgot_\Cbb$, $(X,A)\in \T\pgot$,  
is the Bott symbol and its index is equal to the trivial $1$-dimensional representation of $\tK$. We have finally proved that 
the $K$-equivariant index of $\sigma^2$ times $\overline{\Scal_\pgot^{o^+}}$ is equal to the $\tK$-equivariant index of 
$\sigma^1$. The proof of (\ref{eq:beta}) is complete. $\Box$

\subsection{Proof of the main Theorem}

Let $G$ be a connected semi-simple subgroup of $G'$ with finite center, and let $\Ocal'\in \what{G}'_d$. We suppose 
that the representation $\pi_{\Ocal'}^{G'}$ is 
$G$-admissible. Then we have a decomposition 
$$
\pi_{\Ocal'}^{G'}\vert_G=\sum_{\Ocal\in \wG_d} m_\Ocal\, \pi^{G}_\Ocal.
$$

Let $\Phi_G:\Ocal'\to\ggot^*$ be the moment map relative to the $G$-action on $\Ocal'$. We have proved in 
Theorem \ref{theo:equivalence}, that the $G$-admissibility of $\pi_{\Ocal'}^{G'}$ implies the properness of 
$\Phi_G$. Moreover, since $\Ocal'$ is a regular orbit, the $G$-action on it is proper. Finally we see that $\Ocal'$ 
is a $\spinc$ prequantized proper$^2$ Hamiltonian $G$-manifold. We can 
consider its formal $\spinc$ quantization $\QSf_G(\Ocal')\in \wR(G,d)$, which is defined by the relation
$$
\QSf_G(\Ocal'):=\sum_{\Ocal\in \widehat{G}_d} \QS(\Ocal'\slash\!\!\slash \Ocal)\, \pi^G_\Ocal.
$$

Theorem \ref{theo:main} is proved if we show that $\pi_{\Ocal'}^{G'}\vert_G$ and $\QSf_G(\Ocal')$ are equal in $\wR(G,d)$. 
Since the morphism ${\bf r}^o:\wR(G,d)\to \wR(\tilde{K})$ is one to one, it is sufficient to prove that 
\begin{equation}\label{eq:equality-r-o}
{\bf r}^o\left(\pi_{\Ocal'}^{G'}\vert_G\right)={\bf r}^o\left(\QSf_G(\Ocal')\right).
\end{equation}

On one hand, the element ${\bf r}^o\left(\pi_{\Ocal'}^{G'}\vert_G\right)$ is equal to $\pi_{\Ocal'}^{G'}\vert_K\otimes \Scal_\pgot^o$. 
The restriction $\pi_{\Ocal'}^{G'}\vert_K\in \wR(K)$, which is well defined since the moment map $\Phi_K:\Ocal'\to\kgot^*$ is proper, is equal to 
$\QSf_K(\Ocal')$ (see Corollary \ref{coro:restriction-pi-O-G-H}). So we get 
$$
{\bf r}^o\left(\pi_{\Ocal'}^{G'}\vert_G\right)=\QSf_K(\Ocal')\otimes \Scal_\pgot^o.
$$

On the other hand, Corollary \ref{coro:restriction-pi-O-G-H} tells us that 
$$
{\bf r}^o\left(\QSf_G(\Ocal')\right)=\QSf_K(\Ocal')\otimes \Scal_\pgot^o.
$$
Hence we obtain Equality (\ref{eq:equality-r-o}). The proof of Theorem \ref{theo:main} is completed.

{\small

}

\end{document}